\theoremstyle{plain}
\newtheorem*{theorem*}{Theorem}
\newtheorem{theorem}{Theorem}[section] 
\newtheorem{proposition}[theorem]{Proposition}
\newtheorem{remark}[theorem]{Remark}
\theoremstyle{definition}
\newtheorem{assumption}[theorem]{Assumption}
\newtheorem{example}[theorem]{Example}
\newtheorem{definition}[theorem]{Definition}
\tikzset{middlearrow/.style={
			decoration={markings,
				mark= at position 0.6 with {\arrow{#1}} ,
			},
			postaction={decorate}
		}
	}
\tikzset{->-/.style={decoration={
				markings,
				mark=at position #1 with {\arrow{latex}}},postaction={decorate}}}
	\tikzset{-<-/.style={decoration={
				markings,
				mark=at position #1 with {\arrowreversed{latex}}},postaction={decorate}}}
\newcommand{\ds}{\displaystyle}
\numberwithin{equation}{section}
\def\bigO{{\cal O}}
\newcommand{\z}{\mathrm{z}}
\newcommand{\w}{\mathrm{w}}
\newcommand{\e}{\mathfrak{e}}
\tikzset{
	master/.style={
		execute at end picture={
			\coordinate (lower right) at (current bounding box.south east);
			\coordinate (upper left) at (current bounding box.north west);
		}
	},
	slave/.style={
		execute at end picture={
			\pgfresetboundingbox
			\path (upper left) rectangle (lower right);
		}
	}
}
\tikzset{middlearrow/.style={
		decoration={markings,
			mark= at position 0.6 with {\arrow{#1}} ,
		},
		postaction={decorate}
	}
}
\let\oldbibliography\thebibliography
\renewcommand{\thebibliography}[1]{\oldbibliography{#1}
\setlength{\itemsep}{-0.5pt}}
\newcommand{\mydate}{\DTMdisplaydate{2021}{11}{24}{-1}}
\date{\selectlanguage{USenglish}
\mydate} % \date is what is used in \maketitle
\newcommand{\im}{\text{\upshape Im\,}}
\DeclareMathOperator{\diag}{diag}
\def\XXint#1#2#3{{\setbox0=\hbox{$#1{#2#3}{\oint}$ }
\vcenter{\hbox{$#2#3$ }}\kern-.59\wd0}}
\tikzset{
    master/.style={
        execute at end picture={
            \coordinate (lower right) at (current bounding box.south east);
            \coordinate (upper left) at (current bounding box.north west);
        }
    },
    slave/.style={
        execute at end picture={
            \pgfresetboundingbox
            \path (upper left) rectangle (lower right);
        }
    }
}
\begin{document}

\title{Matrix orthogonality in the plane versus \\ scalar orthogonality in a Riemann surface}
\author{Christophe Charlier \\
\small \textit{Department of Mathematics, KTH Royal Institute of Technology,} \\ \small \textit{100 44 Stockholm, Sweden}.}
%\email{cchar@kth.se}

\maketitle

\begin{abstract}
We consider a non-Hermitian matrix orthogonality on a contour in the complex plane. Given a diagonalizable and rational matrix valued weight, we show that the Christoffel--Darboux (CD) kernel, which is built in terms of matrix orthogonal polynomials, is equivalent to a scalar valued reproducing kernel of meromorphic functions in a Riemann surface. If this Riemann surface has genus $0$, then the matrix valued CD kernel is equivalent to a scalar reproducing kernel of polynomials in the plane. Interestingly, this scalar reproducing kernel is not necessarily a scalar CD kernel. As an application of our result, we show that the correlation kernel of certain doubly periodic lozenge tiling models admits a double contour integral representation involving only a scalar CD kernel. This simplifies a formula of Duits and Kuijlaars.

%\medskip In addition, we provide several applications of our result to the theory of tiling models with doubly periodic weightings. In particular, we show that the correlation kernel of any lozenge tiling model with $2 \times 1$ or $2 \times 2$ periodic weightings admits a double contour integral representation involving only a scalar CD kernel. This simplifies a formula of Duits and Kuijlaars.

\end{abstract}

\noindent
{\small{\sc AMS Subject Classification (2020)}:  	47A56, 42C05, 30E25.}

\noindent
{\small{\sc Keywords}: Matrix orthogonal polynomials, Riemann surfaces, Tiling models.}

\section{Introduction and statement of results}\label{section: introduction}
$P$ is an $r \times r$ matrix polynomial of degree $N$ if it can be written in the form 
\begin{align*}
P(z) = \sum_{k=0}^{N}C_{k}z^{k} \quad \mbox{ for some } C_{0},\ldots,C_{N} \in \mathbb{C}^{r \times r} \mbox{ with } C_{N} \neq 0_{r},
\end{align*}
where $0_{r}$ denotes the $r \times r$ zero matrix. $P$ is monic if $C_{N}$ is equal to the identity matrix $I_{r}$.

\medskip Consider the following non-Hermitian bilinear pairing between $r \times r$ matrix polynomials
\begin{align}\label{pairing}
\langle P,Q \rangle = \int_{\gamma} P(z)W(z)Q(z)dz,
\end{align}
where $\gamma \subset \mathbb{C}$ is a union of finitely many piecewise smooth, oriented curves and $W$ is a continuous $r \times r$ matrix weight. We say that $P$ and $Q$ are orthogonal if $\langle P,Q \rangle = 0$. Because the matrix product does not commute, in general one has $\langle P,Q \rangle \neq \langle Q,P \rangle$, and therefore the pairing \eqref{pairing} gives rise to two families of matrix orthogonal polynomials (MOPs): the left MOPs and the right MOPs. For example, the two monic MOPs of degree $j$, denoted by $P_{j}^{\mathrm{L}}(z) = z^{j}I_{r}+...$ and 
$P_{j}^{\mathrm{R}}(z) = z^{j}I_{r}+...$, are defined such that
\begin{align*}
& \langle P_{j}^{\mathrm{L}},z^{k}I_{r} \rangle = 0_{r}, & & \langle z^{k}I_{r}, P_{j}^{\mathrm{R}} \rangle =0_{r}, & & k=0,\ldots,j-1.
\end{align*}
A particular feature of the matrix orthogonality considered in this work is that the bilinear pairing \eqref{pairing} is \textit{not} an inner product, and therefore in our setting there is no guarantee that the MOPs $P_{j}^{\mathrm{L}}$ and $P_{j}^{\mathrm{R}}$ exist and are unique for every $j$ (except in the special case where $\gamma \subset \mathbb{R}$ and $W(z)$ is symmetric positive definite for every $z \in \gamma$), see also Section \ref{subsubsection: from matrix to scalar OPs} below. 

%In this work, we deviate from the standard set-ups in several aspects. We consider a \textit{non-Hermitian} matrix orthogonality on a general contour $\gamma \subset \mathbb{C}$.   

\medskip We will restrict our attention to \textit{rational} matrix weights, that is, each entry of $W$ is a rational function with no pole on $\gamma$. This type of matrix orthogonality arises in the theory of tiling models \cite{DK}, see also Section \ref{section: applications}. Our main result will be valid under the following assumption on the weight $W$.
\begin{assumption}\label{ass: weight diag}
The rational $r \times r$ matrix valued function $z \mapsto W(z)$ has no pole on $\gamma$ and is diagonalizable for all but finitely many $z \in \mathbb{C}$.
\end{assumption}
Since $W$ is rational, its eigenvalues $\lambda_{1},\ldots,\lambda_{r}$ are (branches of) meromorphic functions on $\mathbb{C}$. Assumption \ref{ass: weight diag} implies that for all but finitely many $z \in \mathbb{C}$, there exists an invertible $r \times r$ matrix $E(z)$ such that
\begin{equation}\label{eigenvalue eigenvector decomposition of A}
W(z) = E(z) \Lambda(z) E(z)^{-1},
\end{equation}
where $\Lambda(z) = \diag(\lambda_{1}(z),\ldots,\lambda_{r}(z))$. Note that it does not matter for us if $W$ fails to be diagonalizable for finitely many points on the contour $\gamma$ itself. Assumption \ref{ass: weight diag} essentially rules out the weights with a Jordan block structure. For example, the weight
\begin{align*}
\begin{pmatrix}
1 & z \\ 0 & 1
\end{pmatrix}
\end{align*}
is only diagonalizable at $z=0$, and therefore does not satisfy Assumption \ref{ass: weight diag}. Our main result is stated in Theorem \ref{thm: reproducing kernel Riemann surface} and is described in terms of the Christoffel-Darboux (CD) kernel. It can roughly be summarized as \textit{if $W$ satisfies Assumption \ref{ass: weight diag}, the non-Hermitian matrix orthogonality induced by \eqref{pairing} is equivalent to a scalar orthogonality in a Riemann surface}. If this Riemann surface has genus $0$, it can be mapped to the plane and the matrix orthogonality is equivalent to a scalar orthogonality in the plane. For instance, we show in Example \ref{example: scalar 2} below that the orthogonality on the unit circle associated to the $r \times r$ matrix weight
\begin{align}\label{W example rxr}
W(z) = z^{-R} \begin{pmatrix}
1 & 1 & 0 & 0 & \cdots & 0 & 0 \\
0 & 1 & 1 & 0 & \cdots & 0 & 0 \\
\vdots & \vdots & \vdots & \vdots & \ddots & \vdots & \vdots \\
0 & 0 & 0 & 0 & \cdots & 1 & 1 \\
z & 0 & 0 & 0 & \cdots & 0 & 1
\end{pmatrix}^{L}, \qquad L,R \in \mathbb{N}_{>0},
\end{align}
is equivalent to the scalar orthogonality, also on the unit circle, associated to the well-studied \cite{MFO} Jacobi weight $\zeta^{-rR}(1+\zeta)^{L}$ with non-standard parameters. (For clarity, we mention that the matrix in \eqref{W example rxr} only contains ones on the main diagonal and on the first upper diagonal). We provide some applications of our results to the theory of tiling models in Section \ref{section: applications}. 

\medskip This work is inspired from \cite{Charlier}, in which it was established that the matrix orthogonality on the unit circle associated to
\begin{align}\label{def of W}
W(z) = \frac{1}{z^{n}}\begin{pmatrix}
\alpha^{2} + z & 1+\alpha \\
(1+\alpha^{3})z & 1+\alpha^{2}z
\end{pmatrix}^{n}, \qquad \alpha \in (0,1), \; n \in \mathbb{N}_{>0}
\end{align}
is equivalent to the scalar orthogonality associated to
\begin{align*}
\mathcal{W}(\zeta) = \bigg( \frac{(\zeta-\alpha c)(\zeta - \alpha c^{-1})}{\zeta (\zeta-c)(\zeta-c^{-1})} \bigg)^{n}, \qquad \mbox{ where } \qquad c = \sqrt{\frac{\alpha}{1-\alpha+\alpha^{2}}},
\end{align*}
on a suitable contour which we do not describe here. Several steps in the proof of \cite{Charlier} rely on the exact expression \eqref{def of W} of $W$. Here, we generalize these ideas to handle any $r \times r$ matrix weight $W$ satisfying Assumption \ref{ass: weight diag}. %We provide several examples to illustrate the various situations that may occur. %, and we feel our proof is also simpler. 

\begin{remark}
In the context of matrix orthogonality on the real line, it was already observed by several authors, see e.g. \cite{DurGr, CanMoVel}, that if the matrix weight is diagonalizable with \textbf{constant} matrices, then the matrix orthogonality is nothing really different from a scalar orthogonality on the real line. The major difference with our situation is that $z \mapsto E(z)$ is obviously not necessarily constant.
\end{remark}

\paragraph{Earlier works.} The study of MOPs has been initiated by \cite{KreinCircle, KreinLine}, motivated by a moment problem arising in operator theory. MOPs have then been studied sporadically, until a resurgence in the 1980's. They have found applications in scattering theory \cite{Ger,AN}, matrix valued spherical functions \cite{Koo,GPT,KMR I,KMR II,KPR, PR,AKR}, system theory \cite{Fuhrmann}, Gaussian quadrature for matrix functions \cite{SVA}, the analysis of sequences of polynomials satisfying higher order recurrence relations \cite{Dur, DVA}, integrable systems \cite{Mir,CafassoIgl CMP, CafassoIgl, AM,  IKR}, Toda lattices \cite{AFAGAMM, DER}, among others. For a survey on MOPs up to 2008, we refer to \cite{DPS}. The standard matrix orthogonality that one often encounters in the literature is associated with a \textit{Hermitian} bilinear pairing
\begin{align}\label{second pairing lol}
( P,Q ) = \int_{\gamma} P(z)W(z)Q(z)^{\dag}dz,
\end{align}
where $^\dag$ denotes the transpose conjugate operation, and where 
\begin{itemize}
\item \vspace{-0.1cm} either $\gamma \subset \mathbb{R}$, and $W(z)$ is real valued, symmetric and positive definite for every $z \in \gamma$,
\item \vspace{-0.2cm} or $\gamma = \{z \in \mathbb{C}: |z|=1\}$ is oriented positively, and $W(z)\frac{dz}{|dz|} = izW(z)$ is Hermitian and positive definite for every $z \in \gamma$.
\end{itemize}
\vspace{-0.1cm}In each of these two cases, the positive definiteness property ensures that the pairing \eqref{second pairing lol} is an inner product, and therefore the existence of the MOPs is guaranteed, see also \cite{SVA1996} for more details. In this work, we deviate from these standard set-ups in several aspects: we consider a non-Hermitian matrix orthogonality, $\gamma$ is a general contour in the complex plane and $W$ is not necessarily symmetric or Hermitian (let alone positive definite). This type of orthogonality appears in the study of certain tiling models and was first considered by \cite{DK}.

%In this work, we deviate from these previous works in several aspects: the pairing \eqref{pairing} is not Hermitian, and no assumptions are made on $W$ that would ensure the existence and uniqueness of the MOPs. The pairing \eqref{pairing} differs from the standard \eqref{second pairing lol} in several aspects matrix orthogonality is a relatively new topic which was introduced in \cite{DK}.

\medskip We now introduce the necessary material to state our results.
\paragraph{CD kernel.} Given a contour $\gamma$, a matrix weight $W$ and $N \in \mathbb{N}_{>0}$, the associated CD kernel $\mathcal{R}_{N}^{W}(w,z)$ is defined as the unique bivariate $r \times r$ matrix polynomial of degree $\leq N-1$ in both $w$ and $z$ that satisfies either
\begin{align}
& \int_{\gamma} P(w)W(w) \mathcal{R}_{N}^{W}(w,z)dw = P(z), & & \mbox{for every } P \in \mathcal{P}_{N-1}^{r \times r} \mbox{ and } z \in \mathbb{C}, \label{reproducing kernel} \\
\mbox{or } \quad & \int_{\gamma} \mathcal{R}_{N}^{W}(w,z)W(z)P(z) dz = P(w), & & \mbox{for every } P \in \mathcal{P}_{N-1}^{r \times r} \mbox{ and } w \in \mathbb{C}, \label{reproducing kernel 2}
\end{align}
where for $j,k \in \mathbb{N}_{>0}$, $\mathcal{P}_{N-1}^{j \times k} = \{\sum_{\ell = 0}^{N-1}C_{\ell}z^{\ell}:C_{0},\ldots,C_{N-1} \in \mathbb{C}^{j \times k}\}$ is the vector space of all $j \times k$ matrix polynomials of degree $\leq N-1$. Because of \eqref{reproducing kernel}--\eqref{reproducing kernel 2}, $\mathcal{R}_{N}^{W}$ is also called the \textit{reproducing kernel} for $\mathcal{P}_{N-1}^{r \times r}$, and we refer to \cite[Proposition 2.4]{Delvaux} (see also \cite[Lemma 4.6]{DK}) for a proof that $\mathcal{R}_{N}^{W}$ is indeed unique. There also exists an explicit expression for $\mathcal{R}_{N}^{W}$ in terms of the left and right MOPs, but since this expression is not needed to state our results, we defer it to Section \ref{subsubsection: from matrix to scalar OPs}. In our setting, because we deal with a non-Hermitian matrix orthogonality, there is in general no guarantee of existence for $\mathcal{R}_{N}^{W}$; however we mention that in certain situations one can prove the existence of $\mathcal{R}_{N}^{W}$ using the particular structure of $W$, see \cite[Lemma 4.8]{DK}. If $\gamma$ is unbounded, then a necessary condition for the existence of $\mathcal{R}_{N}^{W}$ is
\begin{align*}
W(z) = \bigO(z^{-2N}), \qquad z \to \infty.
\end{align*}
This condition ensures the convergence of the integrals in \eqref{reproducing kernel}--\eqref{reproducing kernel 2}.

\medskip Since $W$ is rational, we can write $\det(W(z) - \lambda I_{r} ) = P_{W}(z,\lambda)/Q_{W}(z)$ for some polynomials $P_{W}$ and $Q_{W}$. Let us consider the zero set of $P_{W}$, namely
\begin{align}\label{Riemann surface M general case}
\{(z,\lambda) \in \mathbb{C}^{2} : P_{W}(z,\lambda)=0\}.
\end{align}
It is well-known, see e.g. \cite[Example 6 of Section 4.2 and Chapter 5]{Schlag}, that any zero set of a polynomial in two variables can be completed to an algebraic curve (=compact Riemann surface). Since $P_{W}$ is of degree $r$ in the variable $\lambda$, the Riemann surface $\mathcal{M}$ associated to \eqref{Riemann surface M general case} can be represented as an $r$-sheeted covering of $\widehat{\mathbb{C}}:= \mathbb{C}\cup \{\infty\}$ (and $\mathcal{M}$ is connected if and only if $P_{W}$ is irreducible). We will use the notation $\z, \w$ to denote points of $\mathcal{M}$, and $z,w$ for points of $\mathbb{C}$. If $\z$ and $z$ appear in the same equation, then $z$ denotes the projection of $\z$ on $\widehat{\mathbb{C}}$ (and similarly for $\w$ and $w$). We choose the numbering of the sheets such that the function
\begin{align}
& \z \mapsto \lambda(\z) := \lambda_{k}(z), & & \mbox{if $\z $ is on the $k$-th sheet of $\mathcal{M}$}, \label{def of lambda RS} 
\end{align}
is meromorphic on $\mathcal{M}$ (note that this numbering is not unique). Assumption \ref{ass: weight diag} implies, see Appendix \ref{section: appendix eigenvector} for the details, that we can (and do) choose the matrix of eigenvectors $E(z)$ such that the functions
\begin{align}
& \z \mapsto \mathfrak{e}(\z) := E(z)e_{k}, & & \mbox{if $\z$ is on the $k$-th sheet of $\mathcal{M}$}, \label{def of efrak RS} \\
& \z \mapsto \widehat{\e}(\z) := e_{k}^{T}E(z)^{-1}, & & \mbox{if $\z$ is on the $k$-th sheet of $\mathcal{M}$}, \label{def of efrak inv RS}
\end{align}
are also entrywise meromorphic on $\mathcal{M}$, where $e_{k}$ is the $k$-th column of the identity matrix and $^T$ denotes the transpose operation. 

\medskip The Riemann surface $\mathcal{M}$ associated with the zero set \eqref{Riemann surface M general case} is just one example of a Riemann surface for which one can define meromorphic functions $\lambda$, $\e$ and $\widehat{\e}$ as in \eqref{def of lambda RS}--\eqref{def of efrak inv RS}. However, in certain cases it is more convenient to work with a slightly different $\mathcal{M}$. Assume for example that the weight $W$ is of the form $W(z) = A(z)^{L}$ for a certain rational matrix $A$ and a certain $L \in \mathbb{N}_{>0}$, and assume that
\begin{align*}
A(z) = E(z)\widetilde{\Lambda}(z)E(z)^{-1}, \qquad \widetilde{\Lambda}(z) = \diag(\widetilde{\lambda}_{1}(z),\ldots,\widetilde{\lambda}_{r}(z)), \qquad \mbox{ for all but finitely many }z \in \mathbb{C}.
\end{align*} 
The eigenvalues of $W$ are obviously $\lambda_{k}(z)=\widetilde{\lambda}_{k}(z)^{L}$, $k=1,\ldots,r$. Therefore, in this case, instead of using the Riemann surface associated to \eqref{Riemann surface M general case}, $\mathcal{M}$ can be defined as the (simpler) algebraic curve constructed from the zero set $\{(z,\widetilde{\lambda}) \in \mathbb{C}^{2} : P_{A}(z,\widetilde{\lambda})=0\}$. 
%The eigenvalues of $W$ are obviously $\lambda_{k}(z)=\widetilde{\lambda}_{k}(z)^{L}$, $k=1,\ldots,r$. The $\widetilde{\lambda}_{k}$'s, taken together, define a meromorphic function on the Riemann surface $\mathcal{M}$ constructed from the zero set $\{(z,\widetilde{\lambda}) \in \mathbb{C}^{2} : P_{A}(z,\widetilde{\lambda})=0\}$. 

\medskip From now, $\mathcal{M}$ denotes an arbitrary $r$-sheeted Riemann surface such that the functions $\lambda$, $\e$ and $\widehat{\e}$ defined in \eqref{def of lambda RS}--\eqref{def of efrak inv RS} are meromorphic. Assumption \ref{ass: weight diag} implies the existence of such a $\mathcal{M}$, but the exact choice of the zero set from which $\mathcal{M}$ is constructed does not matter for our results.

\begin{remark}
%There is a slight abuse of notation in the definitions \eqref{def of efrak RS}--\eqref{def of efrak inv RS}.
Since $\mathfrak{e}$ is of size $r \times 1$ and $\widehat{\e}$ is of size $1 \times r$, there are not the inverse of each other in the usual matrix sense. However, from the relations $E(z)E(z)^{-1} = I_{r} = E(z)^{-1}E(z)$, we deduce that they satisfy\footnote{Strictly speaking, \eqref{inverse e 1}-\eqref{inverse e 2} hold for each $z \in \mathbb{C}$ that is not a pole of $E$ or $E^{-1}$, but by continuity they hold for all $z \in \mathbb{C}$.}
\begin{align}
& \widehat{\e}(z^{(j)})\mathfrak{e}(z^{(k)}) = \delta_{j,k}, & & \mbox{for all } z \in \mathbb{C} \mbox{ and } 1 \leq j,k \leq r, \label{inverse e 1} \\
& \sum_{j=1}^{r} \mathfrak{e}(z^{(j)})\widehat{\e}(z^{(j)}) = I_{r}, & & \mbox{for all } z \in \mathbb{C}, \label{inverse e 2}
\end{align}
where for a given $z \in \mathbb{C}$, $z^{(k)}$ denotes the point on the $k$-th sheet of $\mathcal{M}$ whose projection on $\mathbb{C}$ is $z$.
\end{remark}
Let us illustrate with an example how to compute in practice the functions $\lambda$, $\e$ and $\widehat{\e}$.
\begin{example}\label{example: first}
Consider the weight
\begin{align*}
W(z) = \begin{pmatrix}
1 & 1 \\ z^{k} & 1
\end{pmatrix}, \qquad k \in \mathbb{Z}.
\end{align*}
If $k \hspace{-0.15cm} \mod 2 = 1$, then we define $z^{\frac{k}{2}} = |z|^{\frac{k}{2}}e^{\frac{i k}{2}\arg z}$, where the principal branch is chosen for $\arg z$, i.e. $\arg z \in (-\pi,\pi)$. For all $z \in \mathbb{C}\setminus \{0\}$, we can write $W(z) = E(z) \Lambda(z) E(z)^{-1}$ with
\begin{align}\label{decomposition in example 1}
E(z)= \begin{pmatrix}
1 & 1 \\ z^{\frac{k}{2}} & -z^{\frac{k}{2}}
\end{pmatrix}, \qquad \Lambda(z) = \begin{pmatrix}
1+z^{\frac{k}{2}} & 0 \\
0 & 1-z^{\frac{k}{2}}
\end{pmatrix}, \qquad E(z)^{-1}= \frac{1}{2}\begin{pmatrix}
1 & z^{-\frac{k}{2}} \\
1 & -z^{-\frac{k}{2}}
\end{pmatrix}.
\end{align}
If $z<0$ and $k \hspace{-0.15cm} \mod 2 = 1$, then one needs to substitute $z^{\frac{k}{2}}$ by $|z|^{\frac{k}{2}}e^{\frac{\pi i k}{2}}$ everywhere in \eqref{decomposition in example 1} (or by $|z|^{\frac{k}{2}}e^{-\frac{\pi i k}{2}}$ everywhere). We let $\mathcal{M}$ be the Riemann surface associated to the zero set $\{(z,\eta)\in \mathbb{C}^{2}: \eta^{2} = z^{k}\}$. We choose the numbering of the sheets such that $\eta=z^{\frac{k}{2}}$ on the first sheet and $\eta=-z^{\frac{k}{2}}$ on the second sheet (if $k \hspace{-0.15cm} \mod 2 = 0$, $\mathcal{M}$ is simply the disjoint union of two copies of $\widehat{\mathbb{C}}$). The functions $\lambda$, $\e$ and $\widehat{\e}$, defined by \eqref{def of lambda RS} and \eqref{def of efrak RS}--\eqref{def of efrak inv RS}, are explicitly given by 
\begin{align*}
\lambda((z,\eta)) = 1+\eta, \qquad  \e((z,\eta)) = \begin{pmatrix}
1 & \eta
\end{pmatrix}^{T}, \qquad \widehat{\e}((z,\eta)) = \frac{1}{2}\begin{pmatrix}
1 & \eta^{-1}
\end{pmatrix},
\end{align*}
where $\z = (z,\eta)$ denotes a point of $\mathcal{M}$. These functions are meromorphic on $\mathcal{M}$, as required. Note that the use of $(z,\eta)$ for a point of $\mathcal{M}$ is a slight abuse of notation, because e.g. if $k=2$, $\mathcal{M}$ is the disjoint union of two copies of $\widehat{\mathbb{C}}$, and this notation does not distinguish the points $0^{(1)}$ and $0^{(2)}$, which are both denoted $(0,0)$. We will use again this notation several times for convenience, but if this can lead to confusion we will clarify it. 
\end{example}
\paragraph{Main results.} Let $\mathcal{M}_{*}$ be the set $\mathcal{M}$ with all points at infinity removed, and let $\mathcal{Q}$ and $\widehat{\mathcal{Q}}$ be the finite sets of all poles of $\e$ and $\widehat{\e}$, respectively.\footnote{$\z$ is a pole of $\e$ (resp. of $\widehat{\e}$) if $\z$ is a pole for at least one entry of $\e$ (resp. of $\widehat{\e}$).} For $\z \in \mathcal{M}_{*}\setminus \mathcal{Q}$ and $\w \in \mathcal{M}_{*}\setminus \widehat{\mathcal{Q}}$, we define
\begin{align}\label{def of RM}
R_{N}^{\lambda}(\w,\z) = \widehat{\e}(\w)\mathcal{R}_{N}^{W}(w,z)\e(\z),
\end{align}
where we recall that $z$ and $w$ denote the projections on the complex plane of $\z$ and $\w$, respectively. Note that $R_{N}^{\lambda}$ is scalar valued, but is equivalent to $\mathcal{R}_{N}^{W}$ in the sense that we can completely recover $\mathcal{R}_{N}^{W}$ from $R_{N}^{\lambda}$ by
\vspace{-0.2cm}\begin{align}\label{equivalence in equation}
\big[ R_{N}^{\lambda}(w^{(j)},z^{(k)}) \big]_{j,k=1}^{r} = E(w)^{-1}\mathcal{R}_{N}^{W}(w,z)E(z).
\end{align}
Let $\gamma_{\mathcal{M}} = \cup_{j=1}^{r} \gamma^{(j)}$ be the contour on $\mathcal{M}$ that consists of $r$ copies of $\gamma$, one on each sheet, and let $L_{N}$ and $\widehat{L}_{N}$ be the vector spaces of scalar meromorphic functions on $\mathcal{M}$ given by
\begin{align}\label{def of LN and LN star}
& L_{N} = \{\z \mapsto P(z)\e(\z): P\in \mathcal{P}_{N-1}^{1 \times r} \}, & & \widehat{L}_{N} = \{\z \mapsto \widehat{\e}(\z)P(z): P\in \mathcal{P}_{N-1}^{r \times 1} \}.
\end{align}
Our first main result states that $R_{N}^{\lambda}$ satisfies some reproducing properties for $L_{N}$ and $\widehat{L}_{N}$ on the contour $\gamma_{\mathcal{M}}$.
\begin{theorem}\label{thm: reproducing kernel Riemann surface}
Let $\gamma \subset \mathbb{C}$ be a finite union of piecewise smooth, oriented curves, let $W$ be a rational $r \times r$ matrix weight, and let $N \in \mathbb{N}_{>0}$. Suppose that $W$ satisfies Assumption \ref{ass: weight diag} and that $\mathcal{R}_{N}^{W}$ exists. Let $\mathcal{M}$ be an $r$-sheeted Riemann surface such that the functions $\lambda$, $\e$ and $\widehat{\e}$ defined in \eqref{def of lambda RS}--\eqref{def of efrak inv RS} are meromorphic, and define $R_{N}^{\lambda}$ as in \eqref{def of RM}. Then $R_{N}^{\lambda}$ exists, $\dim L_{N} = \dim \widehat{L}_{N}=rN$, and we have the following:
\begin{itemize}
\item[(a)] $\z \mapsto R_{N}^{\lambda}(\w,\z) \in L_{N}$ for every $\w \in \mathcal{M}_{*} \setminus \widehat{\mathcal{Q}}$, 
\item[(b)] $\w \mapsto R_{N}^{\lambda}(\w,\z) \in \widehat{L}_{N}$ for every $\z \in \mathcal{M}_{*} \setminus \mathcal{Q}$, 
\item[(c)] $R_{N}^{\lambda}$ satisfies the following reproducing property for $L_{N}$:
\begin{align}\label{reproducing property M}
& \int_{\gamma_{\mathcal{M}}} f(\w) \lambda(\w) R_{N}^{\lambda}(\w,\z) dw = f(\z), & & \mbox{for every } f \in L_{N} \mbox{ and } \z \in \mathcal{M}_{*} \setminus \mathcal{Q}.
\end{align}
\item[(d)] $R_{N}^{\lambda}$ satisfies the following reproducing property for $\widehat{L}_{N}$:
\begin{align}\label{reproducing property M 2}
& \int_{\gamma_{\mathcal{M}}}  R_{N}^{\lambda}(\w,\z) \lambda(\z) f(\z) dz = f(\w), & & \mbox{for every } f \in \widehat{L}_{N} \mbox{ and } \w \in \mathcal{M}_{*} \setminus \widehat{\mathcal{Q}}.
\end{align}
\end{itemize}
\end{theorem}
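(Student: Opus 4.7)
The plan is to reduce every claim to two elementary ingredients: the definition of $R_{N}^{\lambda}$ in \eqref{def of RM} and the spectral identity
\[
 \sum_{j=1}^{r} \e(w^{(j)}) \lambda(w^{(j)}) \e^{-1}(w^{(j)}) = E(w)\Lambda(w)E(w)^{-1} = W(w),
\]
which follows directly from the definitions \eqref{def of lambda RS}--\eqref{def of efrak inv RS}. Existence of $R_{N}^{\lambda}$ is automatic once $\mathcal{R}_{N}^{W}$ exists, because $\e(\z)$ and $\e^{-1}(\w)$ are meromorphic on $\mathcal{M}$ with pole sets $\mathcal{Q}$ and $\widehat{\mathcal{Q}}$, so \eqref{def of RM} makes sense precisely on $(\mathcal{M}_{*}\setminus\widehat{\mathcal{Q}})\times(\mathcal{M}_{*}\setminus\mathcal{Q})$.

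For the dimensions, I would consider the natural linear maps
$\Phi\colon \mathcal{P}_{N-1}^{1\times r}\to L_{N}$, $\Phi(P)(\z)=P(z)\e(\z)$, and
$\Psi\colon \mathcal{P}_{N-1}^{r\times 1}\to L_{N}^{*}$, $\Psi(P)(\z)=\e^{-1}(\z)P(z)$.
Both are surjective by the very definition \eqref{def of LN and LN star}. Injectivity follows from the fact that if $P(z)\e(z^{(k)})=0$ for all $z$ and all $k=1,\dots,r$, then $P(z)E(z)=0$, and since $E(z)$ is invertible except at finitely many $z$, $P\equiv 0$; the argument for $\Psi$ is symmetric, using \eqref{inverse e 1}--\eqref{inverse e 2}. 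Hence $\dim L_{N}=\dim L_{N}^{*}=rN$. Items (a) and (b) then amount to observing that, for fixed $\w$, the $1\times r$ matrix polynomial $z\mapsto \e^{-1}(\w)\mathcal{R}_{N}^{W}(w,z)$ lies in $\mathcal{P}_{N-1}^{1\times r}$, so $\z\mapsto R_{N}^{\lambda}(\w,\z)=\bigl[\e^{-1}(\w)\mathcal{R}_{N}^{W}(w,\cdot)\bigr](z)\,\e(\z)\in L_{N}$, and analogously for (b) with the roles of $\w,\z$ swapped.

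For the reproducing property (c), take $f\in L_{N}$, write $f(\w)=P(w)\e(\w)$ with $P\in\mathcal{P}_{N-1}^{1\times r}$, split $\gamma_{\mathcal{M}}=\cup_{j=1}^{r}\gamma^{(j)}$, and compute
\[
\int_{\gamma_{\mathcal{M}}} f(\w)\lambda(\w)R_{N}^{\lambda}(\w,\z)\,dw
=\int_{\gamma} P(w)\!\left(\sum_{j=1}^{r}\e(w^{(j)})\lambda(w^{(j)})\e^{-1}(w^{(j)})\right)\!\mathcal{R}_{N}^{W}(w,z)\e(\z)\,dw.
\]
The inner sum equals $W(w)$ by the spectral identity above, so the integral collapses to $\int_{\gamma}P(w)W(w)\mathcal{R}_{N}^{W}(w,z)\,dw\cdot \e(\z)=P(z)\e(\z)=f(\z)$ by the reproducing property \eqref{reproducing kernel} (applied entrywise, or by embedding $P$ as the first row of an $r\times r$ matrix polynomial). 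Part (d) is proved identically, using \eqref{reproducing kernel 2} instead of \eqref{reproducing kernel}.

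The only genuine delicacies are bookkeeping: one must check that $\gamma_{\mathcal{M}}$ avoids the finitely many points of $\mathcal{Q}\cup\widehat{\mathcal{Q}}$ (handled by deforming $\gamma$ slightly on the relevant sheets if needed, or by noting that $W$ has no pole on $\gamma$ so the spectral identity extends by continuity, as already flagged in the footnote after \eqref{inverse e 2}), and that the factor $dw$ in \eqref{reproducing property M} is the differential of the projection, so on each sheet it agrees with the usual $dw$ on $\gamma$. Once these are nailed down, the whole argument is essentially a one-line computation wrapped around the identity $W=\sum_{j}\e\lambda\e^{-1}$, and I do not foresee any further obstacle.
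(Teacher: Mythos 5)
Your proposal is correct and follows essentially the same route as the paper's proof: the dimension count via invertibility of $E(z)$ at all but finitely many points, items (a)--(b) read off directly from \eqref{def of RM} and \eqref{def of LN and LN star}, and the reproducing properties obtained by writing $W(w)=E(w)\Lambda(w)E(w)^{-1}=\sum_{j}\e(w^{(j)})\lambda(w^{(j)})\e^{-1}(w^{(j)})$, splitting $\gamma_{\mathcal{M}}$ into its $r$ sheets, and invoking \eqref{reproducing kernel}--\eqref{reproducing kernel 2} for row/column vector polynomials. The bookkeeping points you flag (poles of $\e,\e^{-1}$ on $\gamma$, the meaning of $dw$ on each sheet) are handled in the paper exactly as you suggest, via continuity of the summed integrand.
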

\begin{proof}
%The proof is quite short, but to keep the exposition concise we defer it to Section \ref{proof kernel M}.
See Section \ref{proof kernel M}.
\end{proof}
$L_{N}$ and $\widehat{L}_{N}$ are contained in certain spaces of meromorphic functions with prescribed zeros and allowed poles. To describe this relation, we briefly introduce some notation and definitions. 
\begin{definition}(divisors)
A divisor $D$ is a formal sum of the form $D = \sum_{j=1}^{n}\ell_{j}\z_{j}$, where $n \in \mathbb{N}_{>0}$, $\ell_{j} \in \mathbb{Z}$ and $\z_{j} \in \mathcal{M}$. We will use $D \geq 0$ to indicate that $\ell_{j} \geq 0$ for all $j \in \{1,\ldots,n\}$, and $D_{1}\geq D_{2}$ to indicate that $D_{1}-D_{2}\geq 0$. The divisor of a non-zero scalar valued meromorphic function $f$ on $\mathcal{M}$ is defined as $\mathrm{div} (f) = \sum_{\z \in \mathcal{Z}_{f}} \ell_{\z} \z - \sum_{\z \in \mathcal{Q}_{f}} \ell_{\z} \z$, where $\mathcal{Z}_{f}$ and $\mathcal{Q}_{f}$ are the finite sets of all zeros and poles of $f$, respectively, and $\ell_{\z} \in \mathbb{N}_{>0}$ is the order of $\z$. 
\end{definition}
\begin{definition}\label{def: zeros and poles}(poles and zeros) Given a matrix valued meromorphic function $F$ on $\mathcal{M}$, we say that $\z$ is a zero of $F$ (of order $m$) if $\z$ is a common zero of all entries of $F$ (of order $\geq m$ for each entry, and of order exactly $m$ for at least one entry). Similarly, $\z$ is a pole of $F$ (of order $m$) if $\z$ is a pole (of order $m$) for at least one entry of $F$ (and the other entries of $F$ have either no pole at $\z$, or a pole of order $\leq m$). Of course, if $F$ is scalar valued, the number of its poles equals the number of its zeros (counting multiplicities), but this is not true in general if $F$ is matrix valued. For instance, the function $\e$ of Example \ref{example: first} with $k \hspace{-0.15cm} \mod 2 = 1$ and $k>0$ has a pole of order $k$ at $\infty^{(1)}=\infty^{(2)}$ and no zero.
\end{definition}
\begin{definition} ($n_{\z}$ and $\widehat{n}_{\z}$)\label{def: nz and nz hat} Let $\mathcal{Z}$ and $\widehat{\mathcal{Z}}$ be the finite sets of all zeros of $\e$ and $\widehat{\e}$, respectively, and recall that $\mathcal{Q}$ and $\widehat{\mathcal{Q}}$ are the finite sets of all poles of $\e$ and $\widehat{\e}$, respectively. The order of a zero $\z \in \mathcal{Z}$ of $\e$ is denoted by $n_{\z}$, and the order of a pole $\z \in \mathcal{Q}$ of $\e$ is denoted by $-n_{\z}$. That is, $n_{\z}>0$ if $\z \in \mathcal{Z}$ and $n_{\z}<0$ if $\z \in \mathcal{Q}$. Similarly, for each $\z \in \widehat{\mathcal{Z}}\cup \widehat{\mathcal{Q}}$, we associate an integer $\widehat{n}_{\z} \in \mathbb{Z}\setminus \{0\}$, which represents the order of the zero, or the opposite of the order of the pole of $\widehat{\e}$ at $\z$.
\end{definition}
\begin{definition} ($\infty^{(j)}$)
Let $\infty^{(j)}$ be the point at infinity on the $j$-th sheet of $\mathcal{M}$. If on each sheet $\mathcal{M}$ has no branch point at infinity, then $\# \{\infty^{(1)},\ldots,\infty^{(r)}\} = r$; otherwise $\# \{\infty^{(1)},\ldots,\infty^{(r)}\} < r$. %By definition, we have $\mathcal{M}_{\infty} = \{\infty^{(k_{1})}, \infty^{(k_{2})},  ... , \infty^{(k_{r_{\star}})}\}$ for some indices $k_{1}<\ldots<k_{r_{\star}}$. For $j=1,\ldots,r_{\star}$, we define $m_{j} = \#\{\ell \in \{1,\ldots,r\}: \infty^{(\ell)} = \infty^{(k_j)}\}$.  We always have $1 \leq r_{\star} \leq r$ and $\sum_{j=1}^{r_{\star}}m_{j}=r$.
\end{definition}
Given a scalar polynomial $P \in \mathcal{P}_{N-1} := \mathcal{P}_{N-1}^{1\times 1}$, the function 
\begin{align*}
\z \mapsto P(z), \qquad \mbox{ where $z$ is the projection of $\z$ on $\widehat{\mathbb{C}}$},
\end{align*}
is meromorphic on $\mathcal{M}$ with a pole of order $(N-1)m_{j}$ at $\infty^{(j)}$, $j=1,\ldots,r$, where
\begin{align*}
m_{j} = \#\{\ell \in \{1,\ldots,r\}: \infty^{(\ell)} = \infty^{(j)}\}.
\end{align*}
Therefore, it is immediate to see from \eqref{def of LN and LN star} that
\vspace{-0.3cm}\begin{align}
& L_{N} \subseteq \{f: \mathrm{div}(f) \geq -\sum_{j=1}^{r}(N-1) \cdot \infty^{(j)} + \sum_{\z \in \mathcal{Z}\cup \mathcal{Q}} n_{\z}\z  \}, \label{LN as a subspace} \\
& \widehat{L}_{N} \subseteq \{f: \mathrm{div}(f) \geq -\sum_{j=1}^{r}(N-1) \cdot \infty^{(j)} + \sum_{\z \in \widehat{\mathcal{Z}}\cup \widehat{\mathcal{Q}}} \widehat{n}_{\z}\z \}. \label{LNstar as a subspace}
\end{align}
\begin{remark}
The opposite inclusion $\supseteq$ does not hold in general. To see this, consider Example \ref{example: first} with $k \hspace{-0.15cm} \mod 2=1$, $k>1$. Since $\eta$ has a pole of order $k$ at $\infty^{(1)}=\infty^{(2)}$, one has
\begin{align*}
L_{N} = \{(z,\eta)\mapsto P_{1}(z)+\eta P_{2}(z): P_{1},P_{2} \in \mathcal{P}_{N-1}\} \subseteq \{f : \mathrm{div}(f) \geq -(2(N-1)+k)\infty^{(1)} \},
\end{align*}
which is consistent with \eqref{LN as a subspace}. However, the function
\begin{align*}
(z,\eta) \mapsto \sqrt{z}= \frac{\eta}{z^{\frac{k-1}{2}}}
\end{align*}
has only a simple pole at $\infty^{(1)}$, but does not belong to $L_{N}$. In particular,
\begin{align*}
L_{N} \subsetneq \{f : \mathrm{div}(f) \geq -(2(N-1)+k)\infty^{(1)} \}.
\end{align*}

\end{remark}

\paragraph{Genus $0$ situation.} Our second main result, Theorem \ref{thm: reprod Rcal U}, states that if $\mathcal{M}$ is a connected Riemann surface of genus $0$, $\mathcal{R}_{N}^{W}$ is in fact completely equivalent to the scalar reproducing kernel $\mathfrak{R}_{rN}^{\mathcal{W}}$ of certain polynomial spaces $\mathcal{V}$ and $\widehat{\mathcal{V}}$. This scalar kernel is associated to a scalar weight $\mathcal{W}$ on a contour $\gamma_{\mathbb{C}} \subset \mathbb{C}$ that are described below. In certain situations, it holds that $\mathcal{V} = \widehat{\mathcal{V}} = \mathcal{P}_{rN-1}$, in which case $\mathfrak{R}_{rN}^{\mathcal{W}}$ is equal to the scalar CD kernel $\mathcal{R}_{rN}^{\mathcal{W}}$. However, we emphasize that in general $\mathfrak{R}_{rN}^{\mathcal{W}}$ is \textit{not} necessarily a CD kernel. We now define the relevant quantities that will appear in the statement of Theorem \ref{thm: reprod Rcal U}.

\medskip \noindent We assume from now that $\mathcal{M}$ is a connected Riemann surface of genus $0$.
\begin{definition}\label{def: varphi} (the maps $\varphi$ and $\phi$) 
Since $\mathcal{M}$ is of genus $0$, there is a one-to-one map $\zeta \mapsto \varphi(\zeta)$ from $\widehat{\mathbb{C}}$ to $\mathcal{M}$. The projection of $\varphi(\zeta)$ to the complex plane will be denoted by $\phi(\zeta)$.
\end{definition}
\begin{definition} (the scalar kernel $\mathfrak{R}_{rN}^{\mathcal{W}}$)
We define $\mathfrak{R}_{rN}^{\mathcal{W}}$ by
\begin{align}
\mathfrak{R}_{rN}^{\mathcal{W}}(\omega,\zeta) & = \widehat{h}(\omega)R_{N}^{\lambda}(\varphi(\omega),\varphi(\zeta))h(\zeta), \nonumber \\
 & = \widehat{h}(\omega)\widehat{\e}(\varphi(\omega))\mathcal{R}_{N}^{W}(\phi(\omega),\phi(\zeta))\e(\varphi(\zeta))h(\zeta), \quad \zeta, \omega \in \mathbb{C}, \label{def of RU}
\end{align}
where $h$ and $\widehat{h}$ are the two scalar valued rational functions given by
\begin{align}
& h(\zeta) = \prod_{\substack{j=1 \\ \infty^{(j)} \neq \varphi(\infty) }}^{r}(\zeta- \varphi^{-1}(\infty^{(j)}))^{N-1} \prod_{\z \in (\mathcal{Z}\cup \mathcal{Q})\setminus \{\varphi(\infty)\}}(\zeta-\varphi^{-1}(\z))^{-n_{\z}}, \label{def of h} \\
& \widehat{h}(\zeta) = \prod_{\substack{j=1 \\ \infty^{(j)} \neq \varphi(\infty) }}^{r}(\zeta- \varphi^{-1}(\infty^{(j)}))^{N-1} \prod_{\z \in (\widehat{\mathcal{Z}}\cup \widehat{\mathcal{Q}})\setminus \{\varphi(\infty)\}}(\zeta-\varphi^{-1}(\z))^{-\widehat{n}_{\z}}. \label{def of h hat}
\end{align}
The role of $h$ and $\widehat{h}$ is to remove all the poles in $\mathbb{C}$ of 
\begin{align*}
\zeta \mapsto \mathcal{R}_{N}^{W}(\phi(\omega),\phi(\zeta))\e(\varphi(\zeta)) \qquad \mbox{ and } \qquad \omega \mapsto \widehat{\e}(\varphi(\omega))\mathcal{R}_{N}^{W}(\phi(\omega),\phi(\zeta)),
\end{align*}
and to remove the zeros of $\zeta \to \e(\varphi(\zeta))$ and $\omega \to \widehat{\e}(\varphi(\omega))$. In particular, $h$ and $\widehat{h}$ are such that
\begin{align*}
\zeta \mapsto \mathcal{R}_{N}^{W}(\phi(\omega),\phi(\zeta))\e(\varphi(\zeta))h(\zeta) \qquad \mbox{ and } \qquad \omega \mapsto \widehat{h}(\omega)\widehat{\e}(\varphi(\omega))\mathcal{R}_{N}^{W}(\phi(\omega),\phi(\zeta))
\end{align*}
are polynomials with no prescribed zeros.
\end{definition}
\begin{definition}\label{def: scalar weight} ($\mathcal{W}$ and $\gamma_{\mathbb{C}}$)
The contour $\gamma_{\mathbb{C}}$ is defined by $\gamma_{\mathbb{C}} = \varphi^{-1}(\gamma_{\mathcal{M}}) = \varphi^{-1}(\cup_{j=1}^{r} \gamma^{(j)})$, and the scalar weight $\mathcal{W}$ by
\begin{align}\label{scalar weight}
& \mathcal{W}(\omega) = \frac{\lambda(\varphi(\omega))}{h(\omega)\widehat{h}(\omega)} \phi'(\omega).
\end{align}
\end{definition}
\begin{definition}\label{def: V and V*} (The polynomial spaces $\mathcal{V}$ and $\widehat{\mathcal{V}}$) We define
\begin{align}\label{V and V*}
\mathcal{V} = \{\zeta \mapsto p(\zeta)=f(\varphi(\zeta)) h(\zeta): f \in L_{N}\}, \qquad \widehat{\mathcal{V}} = \{\zeta \mapsto p(\zeta)=f(\varphi(\zeta)) \widehat{h}(\zeta): f \in \widehat{L}_{N}\}.
\end{align}
\end{definition}
\begin{remark}\label{remark: V and V* are polynomial spaces}
If $P \in \mathcal{P}_{N-1}$, then $\zeta \mapsto P(\varphi(\zeta))$ is a meromorphic function on $\widehat{\mathbb{C}}$ with poles of order $(N-1)m_{j}$ at $\varphi^{-1}(\infty^{(j)})$, $j=1,...,r$. Hence, by \eqref{def of LN and LN star} and \eqref{def of h}--\eqref{def of h hat}, the elements of $\mathcal{V}$ and $\widehat{\mathcal{V}}$ are polynomials of degree 
\begin{align*}
& \leq r(N-1)- \sum_{\z \in \mathcal{Z}\cup \mathcal{Q}} n_{\z} \qquad \mbox{ and } \qquad \leq r(N-1)- \sum_{\z \in \widehat{\mathcal{Z}}\cup \widehat{\mathcal{Q}}} \widehat{n}_{\z}, \qquad \mbox{ respectively}.
\end{align*}
Also, since $\mathcal{V}$ is in bijection with $L_{N}$ via the obvious map $[\z \mapsto f(\z)] \mapsto [\zeta \mapsto f(\varphi(\zeta)) h(\zeta)]$, we have $\dim \mathcal{V} = rN$. Similarly, $\widehat{\mathcal{V}}$ is in bijection with $\widehat{L}_{N}$ and $\dim \widehat{\mathcal{V}} = rN$. In particular, we always have
\begin{align}\label{some bounds}
-\sum_{\z \in \mathcal{Z}\cup \mathcal{Q}} n_{\z} \geq r-1 \qquad \mbox{ and } \qquad -\sum_{\z \in \widehat{\mathcal{Z}}\cup \widehat{\mathcal{Q}}}\widehat{n}_{\z} \geq r-1,
\end{align}
and $\mathcal{V} = \mathcal{P}_{rN-1}$ (resp. $\widehat{\mathcal{V}} = \mathcal{P}_{rN-1}$) if and only if $-\sum_{\z \in \mathcal{Z}\cup \mathcal{Q}} n_{\z} = r-1$ (resp. $-\sum_{\z \in \widehat{\mathcal{Z}}\cup \widehat{\mathcal{Q}}}\widehat{n}_{\z} = r-1$).
\end{remark}

%, we conclude that $\mathcal{V} = \mathcal{P}_{rN-1}$.

\begin{theorem}\label{thm: reprod Rcal U}
Let $\gamma \subset \mathbb{C}$ be a finite union of piecewise smooth, oriented curves, let $W$ be a rational $r \times r$ matrix weight, and let $N \in \mathbb{N}_{>0}$. Suppose that $W$ satisfies Assumption \ref{ass: weight diag} and that $\mathcal{R}_{N}^{W}$ exists. Let $\mathcal{M}$ be an $r$-sheeted Riemann surface such that the functions $\lambda$, $\e$ and $\widehat{\e}$ defined in \eqref{def of lambda RS}--\eqref{def of efrak inv RS} are meromorphic, and assume that $\mathcal{M}$ is of genus $0$. Define $\mathfrak{R}_{rN}^{\mathcal{W}}$, $\varphi$, $\phi$, $\mathcal{W}$, $\gamma_{\mathbb{C}}$, $\mathcal{V}$ and $\widehat{\mathcal{V}}$ as in Definitions \ref{def: varphi}--\ref{def: V and V*}. The scalar kernel $\mathfrak{R}_{rN}^{\mathcal{W}}$ exists, $\dim \mathcal{V} = \dim \widehat{\mathcal{V}}=rN$ and we have:
\begin{itemize}
\item[(a)] $\zeta \mapsto \mathfrak{R}_{rN}^{\mathcal{W}}(\omega,\zeta) \in \mathcal{V}$ for every $\omega \in \mathbb{C}$, 
\item[(b)] $\omega \mapsto \mathfrak{R}_{rN}^{\mathcal{W}}(\omega,\zeta) \in \widehat{\mathcal{V}}$ for every $\zeta \in \mathbb{C}$, 
\item[(c)] $\mathfrak{R}_{rN}^{\mathcal{W}}$ satisfies the following reproducing property for $\mathcal{V}$:
\begin{align}\label{reproducing property C}
& \int_{\gamma_{\mathbb{C}}} p(\omega) \mathcal{W}(\omega) \mathfrak{R}_{rN}^{\mathcal{W}}(\omega,\zeta) d\omega = p(\zeta), & & \mbox{for every } p \in \mathcal{V} \mbox{ and } \zeta \in \mathbb{C}.
\end{align}
\item[(d)] $\mathfrak{R}_{rN}^{\mathcal{W}}$ satisfies the following reproducing property for $\widehat{\mathcal{V}}$:
\begin{align}\label{reproducing property C 2}
& \int_{\gamma_{\mathbb{C}}}  \mathfrak{R}_{rN}^{\mathcal{W}}(\omega,\zeta) \mathcal{W}(\zeta) p(\zeta) d\zeta = p(\omega), & & \mbox{for every } p \in \widehat{\mathcal{V}} \mbox{ and } \omega \in \mathbb{C}.
\end{align}
\item[(e)] We have the following equivalences
\begin{align*}
-\sum_{\z \in \mathcal{Z}\cup \mathcal{Q}} n_{\z} = r-1 \; \Leftrightarrow \; -\sum_{\z \in \widehat{\mathcal{Z}}\cup \widehat{\mathcal{Q}}}\widehat{n}_{\z} = r-1 \; \Leftrightarrow \; \mathcal{V} = \mathcal{P}_{rN-1} \; \Leftrightarrow \; \widehat{\mathcal{V}} = \mathcal{P}_{rN-1} \; \Leftrightarrow \; \mathfrak{R}_{rN}^{\mathcal{W}} = \mathcal{R}_{rN}^{\mathcal{W}},
\end{align*}
where $\mathcal{R}_{rN}^{\mathcal{W}}$ denotes the scalar CD kernel associated to $\mathcal{W}$.
\end{itemize}
\end{theorem}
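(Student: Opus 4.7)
Parts (a)--(d) are obtained by pulling back Theorem~\ref{thm: reproducing kernel Riemann surface} from $\mathcal{M}$ to $\widehat{\mathbb{C}}$ via $\varphi$. The scalar factors $h,\widehat{h}$ and the Jacobian $\phi'$ in Definitions~\ref{def: varphi}--\ref{def: V and V*} are tailored precisely so that $L_N, L_N^{*}, \lambda, R_N^{\lambda}$ on $\mathcal{M}$ pull back to $\mathcal{V}, \mathcal{V}^{*}, \mathcal{W}, \mathfrak{R}_{rN}^{\mathcal{W}}$ on $\mathbb{C}$. Part (e) will then combine the dimension counts of Remark~\ref{remark: V and V* are polynomial spaces} with uniqueness of the CD kernel, closing up with one divisor-theoretic identity.

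\paragraph{Parts (a)--(d).} By construction the map $f\mapsto [\zeta\mapsto f(\varphi(\zeta))h(\zeta)]$ is a linear bijection $L_N\to\mathcal{V}$, and similarly $L_N^{*}\to\mathcal{V}^{*}$ via $\widehat{h}$; this yields $\dim\mathcal{V}=\dim\mathcal{V}^{*}=rN$ and, combined with Theorem~\ref{thm: reproducing kernel Riemann surface}(a)--(b), gives the existence of $\mathfrak{R}_{rN}^{\mathcal{W}}$ together with (a)--(b), since $\widehat{h}(\omega)$ (respectively $h(\zeta)$) is constant in $\zeta$ (respectively $\omega$). For (c) I would substitute $\w=\varphi(\omega)$, $dw=\phi'(\omega)\,d\omega$ in \eqref{reproducing property M}; writing any $p\in\mathcal{V}$ as $p(\omega)=f(\varphi(\omega))h(\omega)$ with $f\in L_N$ and using
\[
R_N^{\lambda}(\varphi(\omega),\varphi(\zeta))=\frac{\mathfrak{R}_{rN}^{\mathcal{W}}(\omega,\zeta)}{\widehat{h}(\omega)\,h(\zeta)},\qquad \mathcal{W}(\omega)=\frac{\lambda(\varphi(\omega))\phi'(\omega)}{h(\omega)\,\widehat{h}(\omega)},
\]
the identity of Theorem~\ref{thm: reproducing kernel Riemann surface}(c) reduces, after clearing the common factor $h(\zeta)^{-1}$, to \eqref{reproducing property C}. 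Part (d) is symmetric, using Theorem~\ref{thm: reproducing kernel Riemann surface}(d).

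\paragraph{The five equivalences in (e).} Remark~\ref{remark: V and V* are polynomial spaces} already yields $-\sum n_{\z}=r-1\Leftrightarrow\mathcal{V}=\mathcal{P}_{rN-1}$ and $-\sum\widehat{n}_{\z}=r-1\Leftrightarrow\mathcal{V}^{*}=\mathcal{P}_{rN-1}$ by dimension counting against the ambient spaces in \eqref{LN as a subspace}--\eqref{LNstar as a subspace}. If $\mathcal{V}=\mathcal{V}^{*}=\mathcal{P}_{rN-1}$, then (a)--(d) exhibit $\mathfrak{R}_{rN}^{\mathcal{W}}$ as a bivariate polynomial of degree $\leq rN-1$ in each variable satisfying both reproducing properties for $\mathcal{P}_{rN-1}$, so uniqueness of the CD kernel (cited in the paragraph introducing $\mathcal{R}_N^W$) forces $\mathfrak{R}_{rN}^{\mathcal{W}}=\mathcal{R}_{rN}^{\mathcal{W}}$. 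Conversely, if $\mathfrak{R}_{rN}^{\mathcal{W}}=\mathcal{R}_{rN}^{\mathcal{W}}$, then (a)--(b) give $\mathcal{V},\mathcal{V}^{*}\subseteq\mathcal{P}_{rN-1}$, and the dimension equalities $\dim\mathcal{V}=\dim\mathcal{V}^{*}=rN$ force $\mathcal{V}=\mathcal{V}^{*}=\mathcal{P}_{rN-1}$.

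\paragraph{Main obstacle.} The only link left to close the chain is
\[
-\sum_{\z\in\mathcal{Z}\cup\mathcal{Q}}n_{\z}=r-1\ \Longleftrightarrow\ -\sum_{\z\in\widehat{\mathcal{Z}}\cup\widehat{\mathcal{Q}}}\widehat{n}_{\z}=r-1,
\]
equivalently $\sum n_{\z}=\sum\widehat{n}_{\z}$. I expect this to be the hardest point: the pointwise identity $\e^{-1}(\z)\e(\z)=1$ does not directly control the common-zero/maximal-pole orders that enter these sums, since those orders for a vector differ from those for its individual entries. My plan is to identify $L_N$ and $L_N^{*}$ with the full Riemann--Roch spaces $L(D)$ and $L(\widehat{D})$ for the divisors implicit in \eqref{LN as a subspace}--\eqref{LNstar as a subspace} on the genus-$0$ surface $\mathcal{M}\cong\widehat{\mathbb{C}}$, and then to exploit the non-degenerate bilinear pairing $L_N\times L_N^{*}\to\mathbb{C}$ furnished by Theorem~\ref{thm: reproducing kernel Riemann surface} (whose non-degeneracy is exactly the existence of $R_N^{\lambda}$) to force $\deg D=\deg\widehat{D}$, which is the desired identity of divisor degrees.
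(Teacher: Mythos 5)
Your treatment of the existence statement, the dimension counts, and parts (a)--(d) matches the paper's proof: the paper also pulls back Theorem \ref{thm: reproducing kernel Riemann surface} through $\varphi$, multiplies by $h$ (resp.\ $\widehat{h}$), and changes variables with the Jacobian $\phi'$, exactly as you describe. The problem is part (e), where you yourself flag that the link $-\sum_{\z\in\mathcal{Z}\cup\mathcal{Q}}n_{\z}=r-1\Leftrightarrow-\sum_{\z\in\widehat{\mathcal{Z}}\cup\widehat{\mathcal{Q}}}\widehat{n}_{\z}=r-1$ is not proved: this is a genuine gap, and the plan you sketch for closing it would not work as stated. You propose to identify $L_{N}$ and $L_{N}^{*}$ with the full Riemann--Roch spaces attached to the divisors in \eqref{LN as a subspace}--\eqref{LNstar as a subspace}, but the paper's remark following those inclusions shows precisely that this identification fails in general (the function $(z,\eta)\mapsto\eta/z^{\frac{k-1}{2}}$ lies in the Riemann--Roch space but not in $L_{N}$), so $L_{N}$ is in general a \emph{proper} subspace and no degree identity can be read off from $\dim L_{N}=\dim L_{N}^{*}=rN$ in this way. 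Moreover your target, $\deg D=\deg\widehat{D}$ (i.e.\ $\sum n_{\z}=\sum\widehat{n}_{\z}$ in full generality), is stronger than what (e) asserts, and nothing in your non-degenerate-pairing heuristic explains why the pairing would control the \emph{common}-zero and maximal-pole orders that define $n_{\z},\widehat{n}_{\z}$.

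The paper closes this chain by a completely different, and much shorter, mechanism: it never compares the two divisor sums directly. Instead it uses the fact that a single one of the reproducing properties \eqref{reproducing kernel}--\eqref{reproducing kernel 2} already pins down the CD kernel, so that $\mathcal{V}=\mathcal{P}_{rN-1}$ (via (a) and (c)) and $\mathcal{V}^{*}=\mathcal{P}_{rN-1}$ (via (b) and (d)) are each, \emph{separately}, equivalent to $\mathfrak{R}_{rN}^{\mathcal{W}}=\mathcal{R}_{rN}^{\mathcal{W}}$; the equivalence between the two divisor conditions then follows through this common intermediate statement, with Remark \ref{remark: V and V* are polynomial spaces} supplying the first link in each row. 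Your argument only establishes the implication from the conjunction $\mathcal{V}=\mathcal{V}^{*}=\mathcal{P}_{rN-1}$ to $\mathfrak{R}_{rN}^{\mathcal{W}}=\mathcal{R}_{rN}^{\mathcal{W}}$, which is weaker than what is needed to connect the five statements. A further small misstep in your converse: if $\mathfrak{R}_{rN}^{\mathcal{W}}=\mathcal{R}_{rN}^{\mathcal{W}}$, parts (a)--(b) do \emph{not} give $\mathcal{V},\mathcal{V}^{*}\subseteq\mathcal{P}_{rN-1}$ (membership of the kernel sections in $\mathcal{V}$ puts no upper bound on $\mathcal{V}$); the correct deduction goes the other way, namely the sections $\zeta\mapsto\mathcal{R}_{rN}^{\mathcal{W}}(\omega,\zeta)$ span $\mathcal{P}_{rN-1}$, giving $\mathcal{P}_{rN-1}\subseteq\mathcal{V}$, and then $\dim\mathcal{V}=rN$ forces equality.
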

\begin{proof}
See Section \ref{subsection: proof thm}.
\end{proof}
We now illustrate how to apply Theorem \ref{thm: reprod Rcal U} in some concrete cases. The following example considers a situation where $\mathcal{M}$ is of genus $0$, but the scalar kernel $\mathfrak{R}_{rN}^{\mathcal{W}}$ is not equivalent to a CD kernel (except for a particular choice of the parameters).
\begin{example}\label{example: scalar 1}
Consider the matrix weight
\begin{align*}
W(z) =  \frac{1}{z^{M}}\begin{pmatrix}
1 & 1 \\ z^{k} & 1
\end{pmatrix}^{L}, \qquad L,M \in \mathbb{N}_{>0},
\end{align*}
and $k \in \mathbb{N}_{>0}$ is odd, and let $\gamma \subset \mathbb{C}$ be such that $0 \notin \gamma$. This weight is similar but slightly more complicated than in Example \ref{example: first}. For each $z \in \mathbb{C}\setminus \{0\}$, we can write $W(z) = E(z) \Lambda(z) E(z)^{-1}$ with
\begin{align*}
E(z)= \begin{pmatrix}
1 & 1 \\ z^{\frac{k}{2}} & -z^{\frac{k}{2}}
\end{pmatrix}, \qquad \Lambda(z) = z^{-M}\begin{pmatrix}
(1+z^{\frac{k}{2}})^{L} & 0 \\
0 & (1-z^{\frac{k}{2}})^{L}
\end{pmatrix}, \qquad E(z)^{-1}= \frac{1}{2}\begin{pmatrix}
1 & z^{-\frac{k}{2}} \\
1 & -z^{-\frac{k}{2}}
\end{pmatrix},
\end{align*}
where the principal branch is chosen for $z^{\frac{k}{2}}$ (for $z<0$, one needs to replace all $z^{\frac{k}{2}}$'s above by $|z|^{\frac{k}{2}}e^{\frac{\pi i k}{2}}$). In particular, $W$ satisfies Assumption \ref{ass: weight diag}. The Riemann surface $\mathcal{M}$ associated to 
\begin{align*}
\{(z,\eta)\in \mathbb{C}^{2}:\eta^{2} = z^{k}\}
\end{align*}
is of genus $0$, and the functions 
\begin{align}\label{lambde e and einv in first example}
\lambda((z,\eta)) = z^{-M}(1+\eta)^{L}, \qquad \e((z,\eta)) = \begin{pmatrix}
1 & \eta
\end{pmatrix}^{T}, \qquad \widehat{\e}((z,\eta)) = \frac{1}{2}\begin{pmatrix}
1 & \eta^{-1}
\end{pmatrix}
\end{align}
are meromorphic on $\mathcal{M}$. Therefore we know by Theorem \ref{thm: reprod Rcal U} that the matrix CD kernel $\mathcal{R}_{N}^{W}$ (assuming it exists) is equivalent to a scalar reproducing kernel $\mathfrak{R}_{rN}^{\mathcal{W}}$ which can be explicitly described as follows. First, we note that $\e$ has no zero and a pole of order $k$ at $\infty^{(1)}=\infty^{(2)}$, and $\widehat{\e}$ has no zero and a pole of order $k$ at $0^{(1)}=0^{(2)}$. Therefore, following Definition \ref{def: nz and nz hat}, we have
\begin{align*}
\mathcal{Z} = \emptyset, \qquad \mathcal{Q} = \{\infty^{(1)}\}, \qquad \widehat{\mathcal{Z}} = \emptyset, \qquad \widehat{\mathcal{Q}} = \{0^{(1)}\}, \qquad n_{\infty^{(1)}} = -k, \qquad \widehat{n}_{0^{(1)}} = -k.
\end{align*}
Define the bijections $\varphi : \widehat{\mathbb{C}} \to \mathcal{M}$ and $\varphi^{-1} : \mathcal{M} \to \widehat{\mathbb{C}}$ by
\begin{align}\label{lol2}
& \varphi(\zeta) = (\zeta^{2},\zeta^{k}), & & \varphi^{-1}((z,\eta))=\eta z^{-\frac{k-1}{2}}=\sqrt{z}.
\end{align}
The functions $\lambda \circ \varphi$, $\e \circ \varphi$ and $\widehat{\e}\circ \varphi$ can be computed from \eqref{lambde e and einv in first example} and \eqref{lol2}, and we obtain
\begin{align*}
\lambda(\varphi(\zeta)) = \zeta^{-2M}(1+\zeta^{k})^{L}, \qquad \e(\varphi(\zeta)) = \begin{pmatrix}
1 & \zeta^{k}
\end{pmatrix}^{T}, \qquad \widehat{\e}(\varphi(\zeta)) = \frac{1}{2}\begin{pmatrix}
1 & \zeta^{-k}
\end{pmatrix}.
\end{align*}
Since $\varphi(\infty) = \infty^{(1)}$ and $\varphi(0) = 0^{(1)}$, by \eqref{def of h}-\eqref{def of h hat} we have $h(\zeta) = 1$ and $\widehat{h}(\zeta) = \zeta^{k}$. Therefore, from \eqref{V and V*} and \eqref{def of LN and LN star}, we have
\begin{align}\label{lol1}
\mathcal{V} = \widehat{\mathcal{V}} = \{P_{1}(\zeta^{2}) + \zeta^{k}P_{2}(\zeta^{2}) : P_{1},P_{2} \in \mathcal{P}_{N-1}\}  \qquad \begin{array}{l l}
= \mathcal{P}_{2N-1}, & \mbox{if } k=1, \\
\neq \mathcal{P}_{2N-1}, & \mbox{if } k>1.
\end{array}
\end{align}
Hence, by criteria (e) of Theorem \ref{thm: reprod Rcal U}, $\mathfrak{R}_{2N}^{\mathcal{W}}$ is a CD kernel if and only if $k=1$. Also, since $\phi(\zeta) = \zeta^{2}$, we infer from \eqref{scalar weight} that 
\begin{align*}
& \mathcal{W}(\zeta) = 2\zeta^{-2M-k+1}(1+\zeta^{k})^{L}, & & \gamma_{\mathbb{C}} = \varphi^{-1}(\gamma_{\mathcal{M}}) = \varphi^{-1}(\cup_{j=1}^{r} \gamma^{(j)}).
\end{align*}
It is easy to verify that if $\gamma$ is the unit circle, then $\gamma_{\mathbb{C}} = \gamma$. On the other hand, if $\gamma = (a,b)$ for certain $b>a>0$, then $\gamma_{\mathbb{C}} = (-\sqrt{b},-\sqrt{a}) \cup (\sqrt{a},\sqrt{b})$; this provides an example of a ``one-cut" matrix orthogonality that leads to a ``two-cuts" scalar orthogonality. It is also interesting to note that if $k=1$, $\mathcal{W}$ above is nothing else than the Jacobi weight with non-standard parameters.  %This is consistent with the fact that $-\sum_{\z \in \mathcal{Z}\cup \mathcal{Q}} n_{\z} = -\sum_{\z \in \widehat{\mathcal{Z}}\cup \widehat{\mathcal{Q}}}\widehat{n}_{\z} = k$ is equal to $r-1$ if and only if $k=1$. 
\end{example}
%While MOPs have richer features than their scalar counterparts, they are also much harder to analyze in practice. 
The above example only deals with $r=2$. Example \ref{example: scalar 2} below provides an application of Theorem \ref{thm: reprod Rcal U} in a situation where the matrix weight is of size $r \times r$, where $r \in \mathbb{N}_{>0}$ is arbitrary.
\begin{example}\label{example: scalar 2}
Consider the weight $W$ given by \eqref{W example rxr}, and let $\gamma \subset \mathbb{C}$ be such that $0 \notin \gamma$. A simple computation shows that
\begin{align*}
W(z) =  E(z) \frac{\widehat{\Lambda}(z)^{L}}{z^{R}} E(z)^{-1}, \qquad z \in \mathbb{C}\setminus \{0\},
\end{align*}
with
\begin{align*}
E(z) = \begin{pmatrix}
1 & 1 & \cdots & 1 \\
z^{\frac{1}{r}} & \rho_{r} z^{\frac{1}{r}} & \cdots & \rho_{r}^{r-1}z^{\frac{1}{r}} \\
\vdots & \vdots & \ddots & \vdots \\
z^{\frac{r-1}{r}} & \rho_{r}^{r-1} z^{\frac{r-1}{r}} & \cdots & \rho_{r}^{(r-1)^{2}}z^{\frac{r-1}{r}}
\end{pmatrix}, \quad E(z)^{-1} = \frac{1}{r} \begin{pmatrix}
1 & z^{-\frac{1}{r}} & \cdots & z^{-\frac{r-1}{r}} \\
1 & \rho_{r} z^{-\frac{1}{r}} & \cdots & \rho_{r}^{r-1} z^{-\frac{r-1}{r}} \\
\vdots & \vdots & \ddots & \vdots \\
1 & \rho_{r}^{r-1}z^{-\frac{1}{r}} & \cdots & \rho_{r}^{(r-1)^{2}} z^{-\frac{r-1}{r}}
\end{pmatrix},
\end{align*}
$\widehat{\Lambda}(z) = \mbox{diag}(1+z^{\frac{1}{r}},1+\rho_{r}z^{\frac{1}{r}}, \ldots, 1+\rho_{r}^{r-1}z^{\frac{1}{r}})$ and $\rho_{r} = e^{\frac{2\pi i}{r}}$, and where the principal branches are taken for the roots (for $z<0$, one needs to replace all $z^{\frac{1}{r}}$'s above by $|z|^{\frac{1}{r}}e^{\frac{\pi i}{r}}$). In particular, $W$ satisfies Assumption \ref{ass: weight diag}. The $r$-sheeted Riemann surface $\mathcal{M}$ associated to $\{(z,\eta)\in \mathbb{C}^{2} : \eta^{r} = z \}$ is of genus $0$. For convenience, the sheets are numbered such that $\eta \sim \rho_{r}^{k-1}z^{\frac{1}{r}}$ as $z \to + \infty$ on the $k$-th sheet, $k=1,\ldots,r$. The functions $\e$, $\widehat{\e}$ and $\lambda$, defined in \eqref{def of lambda RS}--\eqref{def of efrak inv RS}, are given by
\begin{align*}
\e((z,\eta))^{T} = \begin{pmatrix}
1 & \eta & \eta^{2} &
\cdots & \eta^{r-1}
\end{pmatrix}, \quad \widehat{\e}((z,\eta)) = \frac{1}{r} \begin{pmatrix}
1 & \eta^{-1} & \ldots & \eta^{-(r-1)}
\end{pmatrix}, \quad \lambda((z,\eta)) = \frac{(1+ \eta)^{L}}{z^{R}}
\end{align*}
and are meromorphic on $\mathcal{M}$. Since $\mathcal{M}$ is of genus $0$, Theorem \ref{thm: reprod Rcal U} implies that $\mathcal{R}_{N}^{W}$ is equivalent to a scalar kernel $\mathfrak{R}_{rN}^{\mathcal{W}}$ which can be described as follows. Consider the natural bijections $\varphi: \widehat{\mathbb{C}} \to \mathcal{M}$, $\varphi^{-1}: \mathcal{M} \to \widehat{\mathbb{C}}$ given by 
\begin{align*}
& \varphi(\zeta) = (\zeta^{r},\zeta), & & \varphi^{-1}((z,\eta)) = \eta.
\end{align*}
Since $\e$ has no zero and a pole of order $r-1$ at $\infty^{(1)} = \ldots = \infty^{(r)} = (\infty,\infty)$, by \eqref{def of h} and Definition \ref{def: nz and nz hat}, we have 
\begin{align*}
\mathcal{Z} = \emptyset, \qquad \mathcal{Q} = \{\infty^{(1)}\}, \qquad n_{\infty^{(1)}} = -(r-1), \qquad h(\zeta)=1.
\end{align*}
Similarly, $\widehat{\e}$ has no zero and a pole of order $r-1$ at $0^{(1)} = \ldots = 0^{(r)} = (0,0)$, and thus from \eqref{def of h hat} we get
\begin{align*}
\widehat{\mathcal{Z}} = \emptyset, \qquad \widehat{\mathcal{Q}} = \{0^{(1)}\}, \qquad \widehat{n}_{0^{(1)}} = -(r-1), \qquad \widehat{h}(\zeta) = (\zeta-\varphi^{-1}(0^{(1)}))^{r-1} = \zeta^{r-1}.
\end{align*}
Since $-\sum_{\z \in \mathcal{Z}\cup \mathcal{Q}} n_{\z} = r-1$, we known from criteria (e) of Theorem \ref{thm: reprod Rcal U} that $\mathcal{V}=\widehat{\mathcal{V}}=\mathcal{P}_{rN-1}$. This fact can also be verified directly from the definition \eqref{V and V*}:
\begin{align*}
\mathcal{V} = \widehat{\mathcal{V}} = \{P_{1}(\zeta^{r}) + \zeta P_{2}(\zeta^{r}) + \ldots + \zeta^{r-1}P_{r}(\zeta^{r}) : P_{1},\ldots,P_{r} \in \mathcal{P}_{N-1}\} = \mathcal{P}_{rN-1}.
\end{align*}
Therefore $\mathfrak{R}_{rN}^{\mathcal{W}} = \mathcal{R}_{rN}^{\mathcal{W}}$ is the scalar CD kernel associated to the scalar weight 
\begin{align}\label{weight of example 1.18}
& \mathcal{W}(\zeta) = \frac{\lambda(\varphi(\zeta))}{h(\zeta)\widehat{h}(\zeta)} \phi'(\zeta) = r\zeta^{-rR}(1+\zeta)^{L}, \qquad \gamma_{\mathbb{C}} = \varphi^{-1}(\gamma_{\mathcal{M}}) = \varphi^{-1}(\cup_{j=1}^{r} \gamma^{(j)}).
\end{align}
If $\gamma$ is the unit circle, the existence of $\mathcal{R}_{N}^{W}$ is guaranteed from the general result \cite[Lemma 4.8]{DK} (see also Section \ref{section: applications}), and it is easy to see that $\gamma_{\mathbb{C}} = \gamma$. On the other hand, if $\gamma = (a,b)$ for certain $b>a>0$, then $\gamma_{\mathbb{C}} = \cup_{j=1}^{r} \rho_{r}^{j-1}(a^{\frac{1}{r}},b^{\frac{1}{r}})$ is a disjoint union of $r$ intervals in the complex plane.
\end{example}
%We end this section by pointing out that Theorem \ref{thm: reprod Rcal U}, in case criteria (e) is fulfilled, can also be reformulated in terms of matrix orthogonal polynomials (Section \ref{subsubsection: from matrix to scalar OPs}) or in terms of Riemann-Hilbert (RH) problems (Section \ref{subsubsection: from matrix to scalar RHP}). 

We provide other applications of Theorem \ref{thm: reprod Rcal U} in Section \ref{section: applications}.

\medskip It is well-known that matrix valued CD kernels can be expressed in terms of MOPs and are related to certain Riemann-Hilbert (RH) problems of large size \cite{Dur CD formula,DPS, Delvaux, GrunIglesia,DK}. Theorem \ref{thm: reprod Rcal U} (e) establishes a link between matrix and scalar CD kernels, and therefore it also admits two reformulations -- one involving MOPs, the other one involving RH problems -- which we believe are of interest. % at the level of matrix OPs and RH problems which we believe are of interest and are presented in Sections \ref{subsubsection: from matrix to scalar OPs} and \ref{subsubsection: from matrix to scalar RHP}. 

%\begin{remark}
%Practical note: the reader 
%\end{remark}

\subsection{Theorem \ref{thm: reprod Rcal U} (e) in terms of MOPs}\label{subsubsection: from matrix to scalar OPs}
In the context of matrix orthogonality associated with an inner product, it is well-known that the reproducing kernel can be expressed in terms of MOPs by mean of a so-called CD formula, see e.g. \cite{Dur CD formula,DPS, Delvaux, GrunIglesia}. The adaptation of this formula to our setting only requires minor modifications, which we present here. 

\medskip As mentioned in the introduction, the pairing \eqref{pairing} induces two families of MOPs. 

\medskip \noindent We recall that $P_{j}^{\mathrm{L}}(z) = z^{j}I_{r}+...$ and 
$P_{j}^{\mathrm{R}}(z) = z^{j}I_{r}+...$ are the two degree $j$ MOPs that satisfy
\begin{align}\label{orthogonality for PNR and PNL}
& \langle P_{j}^{\mathrm{L}},z^{k}I_{r} \rangle = 0_{r}, & & \langle z^{k}I_{r}, P_{j}^{\mathrm{R}} \rangle =0_{r}, & & k=0,\ldots,j-1,
\end{align}
and we let $Q_{j-1}^{\mathrm{L}}$ and $Q_{j-1}^{\mathrm{R}}$ denote the MOPs of degree $\leq j-1$ characterized by
\begin{align}\label{orthogonality for Q}
& \langle Q_{j-1}^{\mathrm{L}},z^{k}I_{r} \rangle = \delta_{k,j-1}I_{r}, & & \langle z^{k}I_{r},Q_{j-1}^{\mathrm{R}} \rangle = \delta_{k,j-1}I_{r}, & & k=0,\ldots,j-1.
\end{align}
Note that each of the four systems given in \eqref{orthogonality for PNR and PNL} and \eqref{orthogonality for Q} gives $r^{2}j$ equations for the $r^{2}j$ unknown scalar coefficients of $P_{j}^{\mathrm{L}}$, $P_{j}^{\mathrm{R}}$, $Q_{j-1}^{\mathrm{L}}$ or $Q_{j-1}^{\mathrm{R}}$. Furthermore, the square matrices associated to these four linear systems are identical to one another. Therefore, if we have existence and uniqueness for one polynomial among $P_{j}^{\mathrm{L}}$, $P_{j}^{\mathrm{R}}$, $Q_{j-1}^{\mathrm{L}}$ and $Q_{j-1}^{\mathrm{R}}$, this implies existence and uniqueness for the other three polynomials. 

\medskip In certain special situations, some of these four families of MOPs can be directly related to each other. For example:
\begin{itemize}
\item[\textbullet] \vspace{-0.1cm} If $W = W^{T}$ and if $P_{j}^{\mathrm{L}}$ exists and is unique, then $P_{j}^{\mathrm{L}} = (P_{j}^{\mathrm{R}})^{T}$ and $Q_{j-1}^{\mathrm{L}} = (Q_{j-1}^{\mathrm{R}})^{T}$.
\item[\textbullet] \vspace{-0.1cm} If $Q_{j}^{\mathrm{L}}(z) = \kappa_{j}^{\mathrm{L}}z^{j}+\ldots$ exists, and if $\kappa_{j}^{\mathrm{L}} \in \mathbb{C}^{r \times r}$ is invertible, then $P_{j}^{\mathrm{L}}$ exists and is given by $(\kappa_{j}^{\mathrm{L}})^{-1}Q_{j}^{\mathrm{L}}$. Similarly, if $Q_{j}^{\mathrm{R}}(z) = \kappa_{j}^{\mathrm{R}}z^{j}+...$ exists with $\kappa_{j}^{\mathrm{R}}$ invertible, then $P_{j}^{\mathrm{R}}$ exists as well and is given by $Q_{j}^{\mathrm{R}}(\kappa_{j}^{\mathrm{R}})^{-1}$. However, there is no guarantee in general that $\kappa_{j}^{\mathrm{L}}$ and $\kappa_{j}^{\mathrm{R}}$ are invertible.
\end{itemize}
Note however that if $W=W^{\dag}$ and if $P_{j}^{\mathrm{L}}$ exists and is unique, then in general we cannot conclude that $P_{j}^{\mathrm{L}} = (P_{j}^{\mathrm{R}})^{\dag}$ (this follows by a direct inspection of \eqref{orthogonality for PNR and PNL} and \eqref{pairing}).

\noindent If the polynomials $Q_{0}^{\mathrm{R}},Q_{1}^{\mathrm{R}},\ldots,Q_{N-1}^{\mathrm{R}}$ exist and are unique, then $\mathcal{R}_{N}^{W}$ exists as well and is given by
\begin{align}\label{CD kernel}
\mathcal{R}_{N}^{W}(w,z) = \sum_{j=0}^{N-1} Q_{j}^{\mathrm{R}}(w) P_{j}^{\mathrm{L}}(z) = \sum_{j=0}^{N-1} P_{j}^{\mathrm{R}}(w)Q_{j}^{\mathrm{L}}(z).
\end{align}
To verify the validity of the above formula, first note that the sequences of MOPs $\{P_{j}^{\mathrm{L}}(z)\}_{j \geq 0}$ and $\{Q_{j}^{\mathrm{R}}(z)\}_{j \geq 0}$ are biorthogonal with respect to \eqref{pairing}:
\begin{align*}
\langle P_{j}^{\mathrm{L}},Q_{k}^{\mathrm{R}} \rangle = \delta_{k,j}I_{r}, \qquad \mbox{for all } j, k \geq 0.
\end{align*}
Since any polynomial $P \in \mathcal{P}_{N-1}^{r\times r}$ can be represented as $P(z) = \sum_{j=1}^{N-1}C_{j}P_{j}^{\mathrm{L}}(z)$ for certain $C_{j} \in \mathbb{C}^{r \times r}$, one has
\begin{align*}
\langle P, \mathcal{R}_{N}^{W}(\cdot,z) \rangle =  \int_{\gamma} \bigg( \sum_{j=1}^{m}C_{j}P_{j}^{\mathrm{L}}(w) \bigg) W(w) \bigg( \sum_{k=0}^{N-1} Q_{k}^{\mathrm{R}}(w) P_{k}^{\mathrm{L}}(z) \bigg) dw = P(z).
\end{align*}
The above reproducing property is equivalent to \eqref{reproducing kernel}; hence the first formula in \eqref{CD kernel} holds by uniqueness of the CD kernel. The second formula in \eqref{CD kernel} can be proved in a similar way. As mentioned, the formulas \eqref{CD kernel} holds if and only if all polynomials $Q_{0}^{\mathrm{R}},Q_{1}^{\mathrm{R}},\ldots,Q_{N-1}^{\mathrm{R}}$ exist. In fact, the existence and uniqueness of $Q_{N-1}^{\mathrm{R}}$ alone ensures the existence of $\mathcal{R}_{N}^{W}$; this follows from the following CD formula:
\begin{align}\label{CD simplified}
\mathcal{R}_{N}^{W}(w,z) = \frac{1}{z-w}\Big( Q_{N-1}^{\mathrm{R}}(w)P_{N}^{\mathrm{L}}(z) - P_{N}^{\mathrm{R}}(w) Q_{N-1}^{\mathrm{L}}(z) \Big).
\end{align}
This equation was obtained using \cite[eq (4.33)]{DK}. In fact, the formula \cite[eq (4.33)]{DK} is written in terms of the solution to a certain RH problem (see also \eqref{def of mathcal R} below), and therefore, for the convenience of the reader, we sketch a proof of \eqref{CD simplified} in Appendix \ref{section: appendix}. 

\medskip We now present a reformulation of Theorem \ref{thm: reprod Rcal U} (e) in terms of MOPs.
\begin{theorem}
Let $\gamma \subset \mathbb{C}$ be a finite union of piecewise smooth, oriented curves, let $W$ be a rational $r \times r$ matrix weight, and let $N \in \mathbb{N}_{>0}$. Suppose that $W$ satisfies Assumption \ref{ass: weight diag} and that $\mathcal{R}_{N}^{W}$ exists. Let $\mathcal{M}$ be an $r$-sheeted Riemann surface such that the functions $\lambda$, $\e$ and $\widehat{\e}$ defined in \eqref{def of lambda RS}--\eqref{def of efrak inv RS} are meromorphic, and assume that $\mathcal{M}$ is of genus $0$. Define $\mathfrak{R}_{rN}^{\mathcal{W}}$, $h$, $\widehat{h}$, $\varphi$, $\phi$, $\mathcal{W}$, $\gamma_{\mathbb{C}}$, $\mathcal{V}$ and $\widehat{\mathcal{V}}$ as in Definitions \ref{def: varphi}--\ref{def: V and V*}. If $\mathcal{V} = \mathcal{P}_{rN-1}$, we have
\begin{align}\label{scalar kernel in terms of OPs}
\mathfrak{R}_{rN}^{\mathcal{W}}(\omega,\zeta) & =  \widehat{h}(\omega)\widehat{\e}(\varphi(\omega)) \frac{Q_{N-1}^{\mathrm{R}}(\phi(\omega))P_{N}^{\mathrm{L}}(\phi(\zeta)) - P_{N}^{\mathrm{R}}(\phi(\omega)) Q_{N-1}^{\mathrm{L}}(\phi(\zeta))}{\phi(\zeta)-\phi(\omega)} \e(\varphi(\zeta))h(\zeta) \nonumber \\
& = \frac{1}{\zeta-\omega}\Big( q_{rN-1}(\omega)p_{rN}(\zeta) - p_{rN}(\omega) q_{rN-1}(\zeta) \Big) = \mathcal{R}_{rN}^{\mathcal{W}}(\omega,\zeta),
\end{align}
where $p_{rN}(\zeta) = \zeta^{rN}+...$ is the monic scalar orthogonal polynomial defined by
\begin{align}
& \int_{\gamma_{\mathbb{C}}} p_{rN}(\zeta) \mathcal{W}(\zeta)\zeta^{k}d\zeta = 0, & & k = 0,\ldots,rN-1, \nonumber 
\end{align}
and $q_{rN-1}$ is the degree $\leq rN-1$ scalar orthogonal polynomial satisfying
\begin{align}
& \int_{\gamma_{\mathbb{C}}} q_{rN-1}(\zeta) \mathcal{W}(\zeta)\zeta^{k}d\zeta = \delta_{k,rN-1}, & & k = 0,\ldots,rN-1. \nonumber
\end{align}
\end{theorem}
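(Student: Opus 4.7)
The plan is to chain together the matrix Christoffel--Darboux formula \eqref{CD simplified}, the definition \eqref{def of RU} of $\mathfrak{R}_{rN}^{\mathcal{W}}$, Theorem \ref{thm: reprod Rcal U}(e), and the scalar Christoffel--Darboux formula (the $r=1$ case of \eqref{CD simplified}). First I will justify that the MOPs $P_N^L$, $P_N^R$, $Q_{N-1}^L$, $Q_{N-1}^R$ all exist: the discussion following \eqref{orthogonality for Q} observes that the four biorthogonality systems at level $N$ share a common coefficient matrix, so existence of any one of them is equivalent to existence of all four, and by \eqref{CD simplified} this is in turn equivalent to the existence of $\mathcal{R}_N^W$, which holds by hypothesis. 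Consequently, the matrix CD formula \eqref{CD simplified} is available.

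Next, I will specialize \eqref{CD simplified} to $w = \phi(\omega)$ and $z = \phi(\zeta)$, so that $z - w = \phi(\zeta) - \phi(\omega)$, and substitute the resulting expression for $\mathcal{R}_N^W(\phi(\omega),\phi(\zeta))$ into the second line of \eqref{def of RU}. This direct substitution produces the first expression in \eqref{scalar kernel in terms of OPs}. For the last equality, the hypothesis $\mathcal{V} = \mathcal{P}_{rN-1}$ and Theorem \ref{thm: reprod Rcal U}(e) give $\mathfrak{R}_{rN}^{\mathcal{W}} = \mathcal{R}_{rN}^{\mathcal{W}}$, and in particular the scalar CD kernel $\mathcal{R}_{rN}^{\mathcal{W}}$ exists. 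Rerunning the existence argument of the previous paragraph in the scalar case $r=1$ forces existence of both $p_{rN}$ and $q_{rN-1}$, and the middle expression in \eqref{scalar kernel in terms of OPs} then follows by applying the scalar CD formula to $\mathcal{R}_{rN}^{\mathcal{W}}$.

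I anticipate no substantive obstacle: the whole argument is a bookkeeping exercise combining two Christoffel--Darboux formulas (matrix and scalar) with the kernel identification $\mathfrak{R}_{rN}^{\mathcal{W}} = \mathcal{R}_{rN}^{\mathcal{W}}$ supplied by Theorem \ref{thm: reprod Rcal U}(e). The only delicate point is the chain of implications confirming that existence of the relevant CD kernels forces existence of the MOPs appearing in the formulas; this is handled uniformly by the shared-coefficient-matrix observation from Section \ref{subsubsection: from matrix to scalar OPs}.
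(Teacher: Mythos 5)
Your route is the intended one: the paper gives no separate proof of this statement, treating it as the immediate combination of the matrix CD formula \eqref{CD simplified}, the definition \eqref{def of RU}, Theorem \ref{thm: reprod Rcal U} (e), and the scalar ($r=1$) instance of \eqref{CD simplified} applied to $\mathcal{W}$ on $\gamma_{\mathbb{C}}$ --- exactly your chain, and your substitution $w=\phi(\omega)$, $z=\phi(\zeta)$ and the identification $\mathfrak{R}_{rN}^{\mathcal{W}}=\mathcal{R}_{rN}^{\mathcal{W}}$ are handled correctly.

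One justification slip in the existence step: you assert that existence of the level-$N$ MOPs is, ``by \eqref{CD simplified}'', equivalent to existence of $\mathcal{R}_{N}^{W}$. The CD formula only yields the implication you do not need (MOPs $\Rightarrow$ kernel), and the shared-coefficient-matrix remark after \eqref{orthogonality for Q} only ties the four MOP systems to one another, not to the kernel. The direction you actually use (kernel $\Rightarrow$ MOPs) requires a short separate argument: applying \eqref{reproducing kernel} to the monomials $P(w)=Cw^{\ell}$ shows that the $rN\times rN$ block moment matrix $\big[\int_{\gamma}w^{j+k}W(w)\,dw\big]_{j,k=0}^{N-1}$ has a right inverse, hence is invertible, and this is precisely the coefficient matrix of the four linear systems \eqref{orthogonality for PNR and PNL}--\eqref{orthogonality for Q}, so $P_{N}^{L}$, $P_{N}^{R}$, $Q_{N-1}^{L}$, $Q_{N-1}^{R}$ exist and are unique. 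The same argument with $r$ replaced by $1$, $N$ by $rN$, and $\mathcal{W}$, $\gamma_{\mathbb{C}}$ in place of $W$, $\gamma$ gives the existence of $p_{rN}$ and $q_{rN-1}$ once Theorem \ref{thm: reprod Rcal U} (e) guarantees that $\mathcal{R}_{rN}^{\mathcal{W}}$ exists. With that one-line repair your argument is complete and coincides with the paper's.
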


\subsection{Theorem \ref{thm: reprod Rcal U} (e) in terms of solutions to RH problems}\label{subsubsection: from matrix to scalar RHP}
RH problems are boundary value problems for analytic functions; we refer to \cite{Deift} for an introduction, and to \cite{Bothner} for a recent historical review. It is well-known that scalar orthogonal polynomials can be characterized in terms of $2 \times 2$ RH problems, see \cite{FIK} (see also \cite{AvanA2004} for an equivalent formulation in terms of a scalar RH problem on a two-sheeted Riemann surface). The generalization of this result for MOPs has been studied in great detail in \cite{Delvaux, GrunIglesia, CassaManas2012, DK}. Let $\gamma_{0}$ be the contour $\gamma$ with all endpoints and points of self-intersection removed. The RH problem that is relevant for our setting is as follows.

\vspace{-0.3cm}
\subsubsection*{RH problem for $Y$}
\begin{itemize}
\item[(a)] $Y : \mathbb{C}\setminus \gamma \to \mathbb{C}^{2r \times 2r}$ is analytic.
\item[(b)] The limits of $Y(z)$ as $z$ approaches $\gamma_{0}$ from left and right exist, are continuous on $\gamma_{0}$, and are denoted by $Y_+$ and $Y_-$, respectively (here ``left" and ``right" refer to the orientation of $\gamma_{0}$). Furthermore, they are related by
\begin{equation*}
Y_{+}(z) = Y_{-}(z) \begin{pmatrix}
I_{r} & W(z) \\ 0_{r} & I_{r}
\end{pmatrix}, \hspace{0.5cm} \mbox{ for } z \in \gamma_{0}.
\end{equation*}
\item[(c)] As $z \to \infty$, we have $Y(z) = \left(I_{2r} + \bigO(z^{-1})\right) \begin{pmatrix}
z^{N}I_{r} & 0_{r} \\ 0_{r} & z^{-N}I_{r}
\end{pmatrix}$. \\[0.2cm]
As $z \to z_{\star} \in \gamma \setminus \gamma_{0}$, we have $Y(z) = \bigO(\log (z-z_{\star}))$.
\end{itemize}
%The function  that appears in \eqref{def of mathcal R} satisfies the following RH problem. 
The unique solution $Y(\cdot) = Y(\cdot;W,\gamma,N)$ to the above RH problem can be explicitly written in terms of MOPs (see also Appendix \ref{section: appendix}), and exists if and only if $P_{N}^{\mathrm{L}}$ exists and is unique. Furthermore, $Y$ satisfies $\det Y \equiv 1$, and therefore $Y^{-1}$ exists if and only if $Y$ exists. It follows from \cite[eq (4.33)]{DK} that the CD kernel $\mathcal{R}_{N}^{W}$ can be written as
\begin{align}\label{def of mathcal R}
\mathcal{R}_{N}^{W}(w,z) = \frac{1}{2\pi i(z-w)} \begin{pmatrix}
0_{r} & I_{r}
\end{pmatrix}Y^{-1}(w)Y(z)\begin{pmatrix}
I_{r} \\ 0_{r}
\end{pmatrix}.
\end{align}
Formula \eqref{def of mathcal R} is in fact equivalent to the CD formula \eqref{CD simplified}, see Appendix \ref{section: appendix} for details. 

\medskip Theorem \ref{thm: reprod Rcal U} (e) can also be reformulated as follows.
\begin{theorem}\label{thm: RHP}
Let $\gamma \subset \mathbb{C}$ be a finite union of piecewise smooth, oriented curves, let $W$ be a rational $r \times r$ matrix weight, and let $N \in \mathbb{N}_{>0}$. Suppose that $W$ satisfies Assumption \ref{ass: weight diag} and that $\mathcal{R}_{N}^{W}$ exists. Let $\mathcal{M}$ be an $r$-sheeted Riemann surface such that the functions $\lambda$, $\e$ and $\widehat{\e}$ defined in \eqref{def of lambda RS}--\eqref{def of efrak inv RS} are meromorphic, and assume that $\mathcal{M}$ is of genus $0$. Define $\mathfrak{R}_{rN}^{\mathcal{W}}$, $h$, $\widehat{h}$, $\varphi$, $\phi$, $\mathcal{W}$, $\gamma_{\mathbb{C}}$, $\mathcal{V}$ and $\widehat{\mathcal{V}}$ as in Definitions \ref{def: varphi}--\ref{def: V and V*}. If $\mathcal{V} = \mathcal{P}_{rN-1}$, we have
\begin{align}\label{scalar kernel in terms of RHP}
\mathfrak{R}_{rN}^{\mathcal{W}}(\omega,\zeta) & =  \widehat{h}(\omega)\widehat{\e}(\varphi(\omega)) \frac{\begin{pmatrix}
0_{r} & I_{r}
\end{pmatrix}Y^{-1}(\phi(\omega))Y(\phi(\zeta))\begin{pmatrix}
I_{r} \\ 0_{r}
\end{pmatrix}}{2\pi i(\phi(\zeta)-\phi(\omega))}  \e(\varphi(\zeta))h(\zeta) \nonumber \\
& = \frac{1}{2\pi i(\zeta-\omega)}\begin{pmatrix}
0 & 1
\end{pmatrix}\mathcal{Y}^{-1}(\omega)\mathcal{Y}(\zeta)\begin{pmatrix}
1 \\ 0
\end{pmatrix} = \mathcal{R}_{rN}^{\mathcal{W}}(\omega,\zeta),
\end{align}
where $\mathcal{Y}$ is the solution to the following RH problem.
%\newpage
\subsubsection*{RH problem for $\mathcal{Y}$}
\begin{itemize}
\item[(a)] $\mathcal{Y} : \mathbb{C}\setminus \gamma_{\mathbb{C}} \to \mathbb{C}^{2 \times 2}$ is analytic.
\item[(b)] The limits of $\mathcal{Y}(\zeta)$ as $\zeta$ approaches $\gamma_{\mathbb{C},0}:= \varphi^{-1}(\cup_{j=1}^{r} \gamma_{0}^{(j)})$ from left and right exist, are continuous on $\gamma_{\mathbb{C},0}$, and are denoted by $\mathcal{Y}_+$ and $\mathcal{Y}_-$, respectively. Furthermore, they are related by
\begin{equation*}
\mathcal{Y}_{+}(\zeta) = \mathcal{Y}_{-}(\zeta) \begin{pmatrix}
1 & \mathcal{W}(\zeta) \\ 0 & 1
\end{pmatrix}, \hspace{0.5cm} \mbox{ for } \zeta \in \gamma_{\mathbb{C},0}.
\end{equation*}
\item[(c)] As $\zeta \to \infty$, we have $\mathcal{Y}(\zeta) = \left(I_{2} + \bigO(\zeta^{-1})\right) \begin{pmatrix}
\zeta^{rN} & 0 \\ 0 & \zeta^{-rN}
\end{pmatrix}$.  \\[0.2cm]
As $\zeta \to \zeta_{\star} \in \gamma_{\mathbb{C}}\setminus \gamma_{\mathbb{C},0}$, we have $\mathcal{Y}(\zeta) = \bigO(\log ( \zeta-\zeta_{\star}))$.
\end{itemize}
\end{theorem}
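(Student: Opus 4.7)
The plan is to chain together three results: the RH representation \eqref{def of mathcal R} of the matrix CD kernel in terms of $Y$, the equality $\mathfrak{R}_{rN}^{\mathcal{W}}=\mathcal{R}_{rN}^{\mathcal{W}}$ from Theorem \ref{thm: reprod Rcal U}(e), and the classical scalar analog of \eqref{def of mathcal R} (the Fokas--Its--Kitaev representation) relating $\mathcal{R}_{rN}^{\mathcal{W}}$ to $\mathcal{Y}$.

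First I would substitute the RH representation \eqref{def of mathcal R} of $\mathcal{R}_{N}^{W}$ directly into the definition \eqref{def of RU} of $\mathfrak{R}_{rN}^{\mathcal{W}}$. This immediately produces the first equality of \eqref{scalar kernel in terms of RHP}, with no further work required. This step is purely formal and does not use the genus $0$ assumption or the hypothesis $\mathcal{V}=\mathcal{P}_{rN-1}$.

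Next, under the hypothesis $\mathcal{V}=\mathcal{P}_{rN-1}$, Theorem \ref{thm: reprod Rcal U}(e) provides the identity $\mathfrak{R}_{rN}^{\mathcal{W}}=\mathcal{R}_{rN}^{\mathcal{W}}$, where $\mathcal{R}_{rN}^{\mathcal{W}}$ is the scalar CD kernel for the weight $\mathcal{W}$ on $\gamma_{\mathbb{C}}$. To obtain the middle expression in \eqref{scalar kernel in terms of RHP}, I would apply the $r=1$ case of formula \eqref{def of mathcal R} to the scalar data $(\mathcal{W},\gamma_{\mathbb{C}},rN)$; this is the standard fact that the scalar CD kernel admits the $2 \times 2$ RH representation appearing in the last line of \eqref{scalar kernel in terms of RHP}. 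Combining this with Theorem \ref{thm: reprod Rcal U}(e) yields the two remaining equalities.

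The one point that requires mild justification is the existence of $\mathcal{Y}$, since the theorem is stated under the assumption that $\mathcal{R}_N^W$ exists, not that $\mathcal{Y}$ does. I would argue as follows: by Theorem \ref{thm: reprod Rcal U} the scalar kernel $\mathcal{R}_{rN}^{\mathcal{W}}$ exists, which via the scalar analog of the CD formula \eqref{CD simplified} (equivalently, via the equivalence of the four scalar OP systems discussed in Section \ref{subsubsection: from matrix to scalar OPs}) implies the existence and uniqueness of the monic scalar orthogonal polynomial $p_{rN}$; by the standard Fokas--Its--Kitaev construction this guarantees the existence of $\mathcal{Y}$. The main obstacle is essentially bookkeeping rather than substance: all the analytic content has already been absorbed into Theorem \ref{thm: reprod Rcal U}(e), so the proof reduces to invoking known RH representations of CD kernels in the matrix and scalar settings and glueing them via that equivalence.
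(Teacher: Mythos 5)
Your proposal is correct and matches the paper's (implicit) argument: the theorem is stated there as a direct reformulation obtained by substituting \eqref{def of mathcal R} into \eqref{def of RU} for the first equality, invoking Theorem \ref{thm: reprod Rcal U}(e) for $\mathfrak{R}_{rN}^{\mathcal{W}}=\mathcal{R}_{rN}^{\mathcal{W}}$, and using the scalar ($r=1$) case of \eqref{def of mathcal R} for the $2\times 2$ RH representation. Your extra remark on the existence of $\mathcal{Y}$ is a sound addition (existence of the scalar reproducing kernel forces invertibility of the scalar moment matrix, hence existence and uniqueness of $p_{rN}$ and $q_{rN-1}$), though note that this is the converse direction of the CD formula \eqref{CD simplified} rather than the formula itself.
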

\begin{remark}
The \cite{DeiftZhou} steepest descent method is a powerful tool for asymptotic analysis of RH problems, and is particularly well-developed for RH problems of size $2 \times 2$. In principle, it is also possible to implement this method on RH problems of larger sizes, but it represents in general a much more complicated task. Theorem \ref{thm: RHP} implies that, in a situation where $\mathcal{M}$ has genus $0$ and $\mathcal{V} = \mathcal{P}_{rN-1}$ (such as in Example \ref{example: scalar 2}), one can study asymptotic properties of $\mathcal{R}_{N}^{W}$ by means of a $2 \times 2$ RH problem instead of a $2r \times 2r$ RH problem. This fact has already been proved useful in \cite{Charlier} in a situation where $r=2$. 
\end{remark}
\begin{remark}
We have not been able to obtain an analogue of Theorem \ref{thm: RHP} in the case where $\mathcal{M}$ is of genus $\geq 1$. More generally, to extend the standard toolbox for analysis of orthogonal polynomials on curves from complex plane to Riemann surfaces is a highly promising direction of the research. For instance, the recent works \cite{Chirka1,Chirka2,Chirka3} use modern results in multivariate complex analysis to develop such notions as
equilibrium measures, capacities and Green’s functions for compact Riemann surfaces. We also mention that, after the first version of this paper appeared, \cite{Bertola1,Bertola2} developed a non-linear steepest descent method to orthogonality on elliptic curves. 
\end{remark}

\section{Applications to tiling models}\label{section: applications}
Tiling models with doubly periodic weightings form a class of determinantal point processes with new interesting features \cite{CY2014, CJ, BCJ, DK, BeD, CDKL, Berggren, Charlier, BCJ2}. As it turns out, the correlation structure of these models can be studied by means of certain double contour integrals \cite{DK} which involve a matrix valued CD kernel. In this section, we use Theorems \ref{thm: reproducing kernel Riemann surface} and \ref{thm: reprod Rcal U} to simplify this formula. We first present the necessary material to invoke the formula from \cite{DK}. To simplify the presentation, we are going to focus on \textit{lozenge tilings of a hexagon}. We mention however that the main theorem of \cite{DK}, and therefore also Theorem \ref{thm: double contour integrals simplifications} below, can be applied to various other tiling models. %In order to limit the length of this section, we limit ourselve on lozenge tilings of a hexagon with doubly periodic weightings. 

%

%The goal of this section is to illustrate how Theorem \ref{thm: double contour integrals simplifications} can be used to simplify this formula for certain models.

%However, we stress that there exists other tiling models for which Theorem \ref{thm: reprod Rcal U}, the interested reader should look at \cite{DK} for other examples.

\paragraph{Lozenge tilings of a hexagon.} Consider the infinite graph $\mathcal{G}_{H}$ whose vertex set is $\mathbb{Z}\times \mathbb{Z}$, and whose edges are of the form $\big( (x,y),(x+1,y) \big)$ or $\big( (x,y),(x+1,y+1) \big)$. A weighting on $\mathcal{G}_{H}$ consists of assigning to each edge a positive number. Here, we consider weightings that are $r \times q$ periodic, which means periodic of period $r$ in the vertical direction, and periodic of period $q$ in the horizontal direction. More precisely, an $r \times q$ periodic weighting depends on $2 r q$ edge weights, denoted by $a_{\ell,j}$, $b_{\ell,j}$, $0 \leq \ell \leq q-1$, $0 \leq j \leq r-1$, which we assign as follows:
\begin{align}\label{weight on a block}
\mbox{weight of } \big( (\ell,j),(\ell+1,j + \delta ) \big) = \begin{cases}
b_{\ell,j}, & \mbox{if } \delta = 0, \\
a_{\ell,j}, & \mbox{if } \delta = 1,
\end{cases} \qquad 0 \leq \ell \leq q-1, \; 0 \leq j \leq r-1.
\end{align}
Then, the weighting is extended over all edges of $\mathcal{G}_{H}$ by
\begin{align}\label{periodic weightings}
\hspace{-0.3cm}\mbox{weight of } \big( (\ell + m_{1}q,j +m_{2}r),(\ell+ m_{1}q +1,j +m_{2}r + \delta ) \big) = \mbox{weight of } \big( (\ell,j),(\ell+1,j + \delta ) \big),
\end{align}
for all $m_{1},m_{2} \in \mathbb{Z}$, and $\delta \in \{0,1\}$. The situation is illustrated in Figure \ref{fig:periodicity} for $r=2$ and $q=3$. 

\begin{figure}[h]
\begin{center}
\begin{tikzpicture}[master]
\node at (0,0) {};

% The vertices
\draw[fill] (0,0) circle (0.4mm);
\draw[fill] (0.5,0) circle (0.4mm);
\draw[fill] (1,0) circle (0.4mm);
\draw[fill] (1.5,0) circle (0.4mm);
\draw[fill] (2,0) circle (0.4mm);
\draw[fill] (2.5,0) circle (0.4mm);
\draw[fill] (3,0) circle (0.4mm);
\draw[fill] (3.5,0) circle (0.4mm);

\draw[fill] (0,0.5) circle (0.4mm);
\draw[fill] (0.5,0.5) circle (0.4mm);
\draw[fill] (1,0.5) circle (0.4mm);
\draw[fill] (1.5,0.5) circle (0.4mm);
\draw[fill] (2,0.5) circle (0.4mm);
\draw[fill] (2.5,0.5) circle (0.4mm);
\draw[fill] (3,0.5) circle (0.4mm);
\draw[fill] (3.5,0.5) circle (0.4mm);

\draw[fill] (0,1) circle (0.4mm);
\draw[fill] (0.5,1) circle (0.4mm);
\draw[fill] (1,1) circle (0.4mm);
\draw[fill] (1.5,1) circle (0.4mm);
\draw[fill] (2,1) circle (0.4mm);
\draw[fill] (2.5,1) circle (0.4mm);
\draw[fill] (3,1) circle (0.4mm);
\draw[fill] (3.5,1) circle (0.4mm);

\draw[fill] (0,1.5) circle (0.4mm);
\draw[fill] (0.5,1.5) circle (0.4mm);
\draw[fill] (1,1.5) circle (0.4mm);
\draw[fill] (1.5,1.5) circle (0.4mm);
\draw[fill] (2,1.5) circle (0.4mm);
\draw[fill] (2.5,1.5) circle (0.4mm);
\draw[fill] (3,1.5) circle (0.4mm);
\draw[fill] (3.5,1.5) circle (0.4mm);

\draw[fill] (0,2) circle (0.4mm);
\draw[fill] (0.5,2) circle (0.4mm);
\draw[fill] (1,2) circle (0.4mm);
\draw[fill] (1.5,2) circle (0.4mm);
\draw[fill] (2,2) circle (0.4mm);
\draw[fill] (2.5,2) circle (0.4mm);
\draw[fill] (3,2) circle (0.4mm);
\draw[fill] (3.5,2) circle (0.4mm);

\draw[fill] (0,2.5) circle (0.4mm);
\draw[fill] (0.5,2.5) circle (0.4mm);
\draw[fill] (1,2.5) circle (0.4mm);
\draw[fill] (1.5,2.5) circle (0.4mm);
\draw[fill] (2,2.5) circle (0.4mm);
\draw[fill] (2.5,2.5) circle (0.4mm);
\draw[fill] (3,2.5) circle (0.4mm);
\draw[fill] (3.5,2.5) circle (0.4mm);

% the horizontal arrows

\draw (0,0)--(3.5,0);
\draw (0,0.5)--(3.5,0.5);
\draw (0,1)--(3.5,1);
\draw (0,1.5)--(3.5,1.5);
\draw (0,2)--(3.5,2);
\draw (0,2.5)--(3.5,2.5);

% the oblique arrows

\draw (0,0)--(2.5,2.5);

\draw (0.5,0)--(3,2.5);
\draw (1,0)--(3.5,2.5);
\draw (1.5,0)--(3.5,2);
\draw (2,0)--(3.5,1.5);
\draw (2.5,0)--(3.5,1);
\draw (3,0)--(3.5,0.5);

\draw (0,0.5)--(2,2.5);
\draw (0,1)--(1.5,2.5);
\draw (0,1.5)--(1,2.5);
\draw (0,2)--(0.5,2.5);
\draw (0,2.5)--(0,2.5);

\draw[dashed] (-0.5,0)--(4,0);
\draw[dashed] (-0.5,1)--(4,1);
\draw[dashed] (-0.5,2)--(4,2);
\draw[dashed] (0,-0.3)--(0,2.8);
\draw[dashed] (1.5,-0.3)--(1.5,2.8);
\draw[dashed] (3,-0.3)--(3,2.8);

%\node at (-0.3,-0.3) {\small (0,0)};
\end{tikzpicture} \hspace{1cm} \begin{tikzpicture}[slave]
\draw[fill] (0,0) circle (0.6mm);
\draw[fill] (1,0) circle (0.6mm);
\draw[fill] (2,0) circle (0.6mm);
\draw[fill] (3,0) circle (0.6mm);

\draw[fill] (0,1) circle (0.6mm);
\draw[fill] (1,1) circle (0.6mm);
\draw[fill] (2,1) circle (0.6mm);
\draw[fill] (3,1) circle (0.6mm);

\draw[fill] (1,2) circle (0.6mm);
\draw[fill] (2,2) circle (0.6mm);
\draw[fill] (3,2) circle (0.6mm);

\draw (0,0)--(3,0); 
\draw (0,1)--(3,1);
\draw (0,1)--(1,2);
\draw (0,0)--(2,2);
\draw (1,0)--(3,2);
\draw (2,0)--(3,1);

\node at (0.3,1.7) {$a_{0,1}$};
\node at (1.3,1.7) {$a_{1,1}$};
\node at (2.3,1.7) {$a_{2,1}$};
\node at (0.1,0.5) {$a_{0,0}$};
\node at (1.1,0.5) {$a_{1,0}$};
\node at (2.1,0.5) {$a_{2,0}$};
\node at (0.6,1.15) {$b_{0,1}$};
\node at (1.6,1.15) {$b_{1,1}$};
\node at (2.6,1.15) {$b_{2,1}$};
\node at (0.6,-0.2) {$b_{0,0}$};
\node at (1.6,-0.2) {$b_{1,0}$};
\node at (2.6,-0.2) {$b_{2,0}$};
\end{tikzpicture}
\end{center}
\vspace{-0.5cm}\caption{\label{fig:periodicity}Left: the graph $\mathcal{G}_{H}$. The dashed lines emphasize the $2 \times 3$ periodicity, but are not part of $\mathcal{G}_{H}$. Right: the assignment of the weights on a $2 \times 3$ block of $\mathcal{G}_{H}$.}
\end{figure}
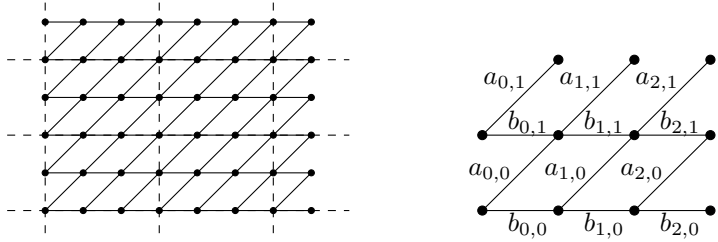

\noindent We define $q$ matrices $A_{0}, \ldots, A_{q-1}$, each of them of size $r \times r$, by 
\begin{align}\label{matrix Al}
& A_{\ell}(z) = \begin{pmatrix}
b_{\ell,0} & a_{\ell,0} & 0 & 0 & \cdots & 0 & 0 & 0  \\
0 & b_{\ell,1} & a_{\ell,1} & 0  & \cdots & 0 & 0 & 0 \\
\vdots & \vdots & \vdots & \vdots & \ddots & \vdots & \vdots & \vdots \\
0 & 0 & 0 & 0 & \cdots & 0 & b_{\ell,r-2} & a_{\ell,r-2} \\
z a_{\ell,r-1} & 0 & 0 & 0 & \cdots & 0 & 0 & b_{\ell,r-1}
\end{pmatrix}, & & \ell = 0,\ldots,q-1,
\end{align}
and for all $m_{1} \in \mathbb{Z}$, we define $A_{\ell + m_{1} q} = A_{\ell}$. We can retrieve the weighting from these matrices by 
\begin{align}\label{integral formula}
\mbox{weight of } \big( (\ell,ry_{1}+j-1),(\ell+1,ry_{2}+k-1) \big) = \frac{1}{2\pi i}\oint_{\gamma} (A_{\ell}(z))_{j,k} z^{y_{1}-y_{2}} \frac{dz}{z},
\end{align}
where $\ell,y_{1},y_{2} \in \mathbb{Z}$, $1 \leq j,k \leq r$,  and $\gamma$ is the unit circle oriented positively. Given three positive integers $L$, $N$ and $M$, we consider the subgraph $\widehat{\mathcal{G}}_{H}$ that consists of the vertices and edges of $\mathcal{G}_{H}$ that lie entirely in the hexagon
\begin{align*}
\mathcal{H} := \{(x,y) \in \mathbb{R}^{2} : 0 \leq x \leq L, \; -\tfrac{1}{2} \leq y \leq N+M-\tfrac{1}{2}, \; -(L-M)-\tfrac{1}{2} \leq y-x \leq  N - \tfrac{1}{2} \},
\end{align*}
see also Figure \ref{fig:hexagon} (left). We say that $\mathfrak{p} : \{0,1,\ldots,L\} \to \mathbb{Z}^{2} \cap \mathcal{H}$ is a path of $\widehat{\mathcal{G}}_{H}$ if $\mathfrak{p}(\ell+1)-\mathfrak{p}(\ell) \in \{0,1\}$ for each $\ell \in \{0,\ldots,L-1\}$. The set of all systems of $N$ non-intersecting paths of $\widehat{\mathcal{G}}_{H}$ is in bijection with the set of all lozenge tilings of $\mathcal{H}$. This one-to-one correspondence is well-known, and is illustrated in Figure \ref{fig:hexagon} (middle and right) for a particular example. The weighting on the edges of $\widehat{\mathcal{G}}_{H}$ naturally induces a weighting on the paths $\{\mathfrak{p}\}$, and on the tilings $\{\mathcal{T}\}$, by
\begin{align*}
& \mbox{weight of } \mathfrak{p} = \prod_{\ell=0}^{L-1} \mbox{weight of } \big( (\ell,\mathfrak{p}(\ell)),(\ell+1,\mathfrak{p}(\ell+1)) \big) \quad \mbox{ and } \quad  \mbox{weight of } \mathcal{T} = \prod_{\mathfrak{p} \in \mathcal{T}} \mbox{weight of } \mathfrak{p}.
\end{align*}
Since the weights on the edges are positive, this defines a probability measure over the set $\{\mathcal{T}\}$ by 
\begin{align}\label{prob measure over the tilings}
\mathbb{P}(\mathcal{T}) = \frac{\mbox{weight of } \mathcal{T}}{\sum_{\mathcal{T}'} \mbox{weight of } \mathcal{T}'},
\end{align}
where the sum is taken over all tilings. By placing points on the paths as shown in Figure \ref{fig:hexagon} (middle), each tiling $\mathcal{T}$ generates a point configuration. Hence, the probability measure \eqref{prob measure over the tilings} can be seen as a discrete point process. As it turns out \cite{GV,L,EM}, this point process is \textit{determinantal}, which implies that all the information about \eqref{prob measure over the tilings} is encoded in a \textit{correlation kernel} $K : \mathbb{Z}^{2} \times \mathbb{Z}^{2} \to \mathbb{R}$. Let $k \geq 1$ be an integer. The determinantal property of the process means that, for any integers $x_{1},\ldots,x_{k},y_{1},\ldots,y_{k}$ satisfying $0 \leq x_{1},\ldots,x_{k} \leq L$ and $(x_{j},y_{j}) \neq (x_{\ell},y_{\ell})$ if $j \neq \ell$, we have
\begin{align}\label{def of determinantal}
\mathbb{P}\bigg[ \begin{matrix}
\mbox{$N$ non-intersecting paths go through } \\
\mbox{each of the points } (x_{1},y_{1}),\ldots,(x_{k},y_{k})
\end{matrix} \bigg] = \det \big[ K(x_{i},y_{i},x_{j},y_{j}) \big]_{i,j=1}^{k}.
\end{align}

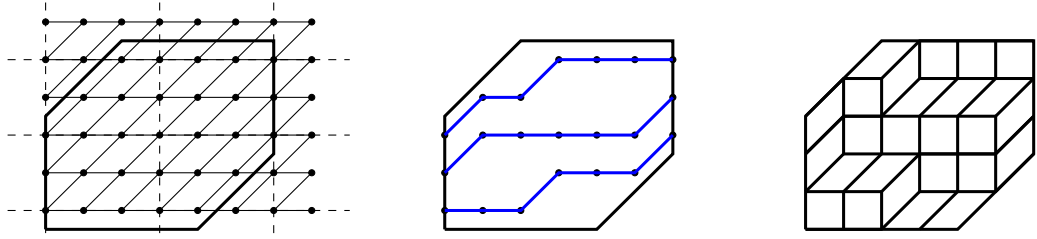
\begin{figure}%[h]
\begin{center}
\begin{tikzpicture}
\node at (0,0) {};

% The vertices
\draw[fill] (0,0) circle (0.4mm);
\draw[fill] (0.5,0) circle (0.4mm);
\draw[fill] (1,0) circle (0.4mm);
\draw[fill] (1.5,0) circle (0.4mm);
\draw[fill] (2,0) circle (0.4mm);
\draw[fill] (2.5,0) circle (0.4mm);
\draw[fill] (3,0) circle (0.4mm);
\draw[fill] (3.5,0) circle (0.4mm);

\draw[fill] (0,0.5) circle (0.4mm);
\draw[fill] (0.5,0.5) circle (0.4mm);
\draw[fill] (1,0.5) circle (0.4mm);
\draw[fill] (1.5,0.5) circle (0.4mm);
\draw[fill] (2,0.5) circle (0.4mm);
\draw[fill] (2.5,0.5) circle (0.4mm);
\draw[fill] (3,0.5) circle (0.4mm);
\draw[fill] (3.5,0.5) circle (0.4mm);

\draw[fill] (0,1) circle (0.4mm);
\draw[fill] (0.5,1) circle (0.4mm);
\draw[fill] (1,1) circle (0.4mm);
\draw[fill] (1.5,1) circle (0.4mm);
\draw[fill] (2,1) circle (0.4mm);
\draw[fill] (2.5,1) circle (0.4mm);
\draw[fill] (3,1) circle (0.4mm);
\draw[fill] (3.5,1) circle (0.4mm);

\draw[fill] (0,1.5) circle (0.4mm);
\draw[fill] (0.5,1.5) circle (0.4mm);
\draw[fill] (1,1.5) circle (0.4mm);
\draw[fill] (1.5,1.5) circle (0.4mm);
\draw[fill] (2,1.5) circle (0.4mm);
\draw[fill] (2.5,1.5) circle (0.4mm);
\draw[fill] (3,1.5) circle (0.4mm);
\draw[fill] (3.5,1.5) circle (0.4mm);

\draw[fill] (0,2) circle (0.4mm);
\draw[fill] (0.5,2) circle (0.4mm);
\draw[fill] (1,2) circle (0.4mm);
\draw[fill] (1.5,2) circle (0.4mm);
\draw[fill] (2,2) circle (0.4mm);
\draw[fill] (2.5,2) circle (0.4mm);
\draw[fill] (3,2) circle (0.4mm);
\draw[fill] (3.5,2) circle (0.4mm);

\draw[fill] (0,2.5) circle (0.4mm);
\draw[fill] (0.5,2.5) circle (0.4mm);
\draw[fill] (1,2.5) circle (0.4mm);
\draw[fill] (1.5,2.5) circle (0.4mm);
\draw[fill] (2,2.5) circle (0.4mm);
\draw[fill] (2.5,2.5) circle (0.4mm);
\draw[fill] (3,2.5) circle (0.4mm);
\draw[fill] (3.5,2.5) circle (0.4mm);

% the horizontal arrows

\draw (0,0)--(3.5,0);
\draw (0,0.5)--(3.5,0.5);
\draw (0,1)--(3.5,1);
\draw (0,1.5)--(3.5,1.5);
\draw (0,2)--(3.5,2);
\draw (0,2.5)--(3.5,2.5);

% the oblique arrows

\draw (0,0)--(2.5,2.5);

\draw (0.5,0)--(3,2.5);
\draw (1,0)--(3.5,2.5);
\draw (1.5,0)--(3.5,2);
\draw (2,0)--(3.5,1.5);
\draw (2.5,0)--(3.5,1);
\draw (3,0)--(3.5,0.5);

\draw (0,0.5)--(2,2.5);
\draw (0,1)--(1.5,2.5);
\draw (0,1.5)--(1,2.5);
\draw (0,2)--(0.5,2.5);
\draw (0,2.5)--(0,2.5);

\draw[dashed] (-0.5,0)--(4,0);
\draw[dashed] (-0.5,1)--(4,1);
\draw[dashed] (-0.5,2)--(4,2);
\draw[dashed] (0,-0.3)--(0,2.8);
\draw[dashed] (1.5,-0.3)--(1.5,2.8);
\draw[dashed] (3,-0.3)--(3,2.8);

\draw[line width = 0.4mm] (0,-0.25)--++(0,1.5)--++(1,1)--++(2,0)--++(0,-1.5)--++(-1,-1)--++(-2,0);

%\node at (-0.3,-0.3) {\small (0,0)};
\end{tikzpicture} \hspace{0.5cm} \begin{tikzpicture}[slave]
\node at (0,0) {};

% The vertices
\draw[fill] (0,0) circle (0.4mm);
\draw[fill] (0.5,0) circle (0.4mm);
\draw[fill] (1,0) circle (0.4mm);

\draw[fill] (0,0.5) circle (0.4mm);
\draw[fill] (1.5,0.5) circle (0.4mm);
\draw[fill] (2,0.5) circle (0.4mm);
\draw[fill] (2.5,0.5) circle (0.4mm);

\draw[fill] (0,1) circle (0.4mm);
\draw[fill] (0.5,1) circle (0.4mm);
\draw[fill] (1,1) circle (0.4mm);
\draw[fill] (1.5,1) circle (0.4mm);
\draw[fill] (2,1) circle (0.4mm);
\draw[fill] (2.5,1) circle (0.4mm);
\draw[fill] (3,1) circle (0.4mm);

\draw[fill] (0.5,1.5) circle (0.4mm);
\draw[fill] (1,1.5) circle (0.4mm);
\draw[fill] (3,1.5) circle (0.4mm);

\draw[fill] (1.5,2) circle (0.4mm);
\draw[fill] (2,2) circle (0.4mm);
\draw[fill] (2.5,2) circle (0.4mm);
\draw[fill] (3,2) circle (0.4mm);

\draw[line width = 0.4mm] (0,-0.25)--++(0,1.5)--++(1,1)--++(2,0)--++(0,-1.5)--++(-1,-1)--++(-2,0);

\draw[line width = 0.4mm,blue] (0,1)--++(0.5,0.5)--++(0.5,0)--++(0.5,0.5)--++(1.5,0);
\draw[line width = 0.4mm,blue] (0,0.5)--++(0.5,0.5)--++(2,0)--++(0.5,0.5);
\draw[line width = 0.4mm,blue] (0,0)--++(1,0)--++(0.5,0.5)--++(1,0)--++(0.5,0.5);
\end{tikzpicture} \hspace{0cm} \begin{tikzpicture}[slave]
\node at (0,0) {};

\draw[line width = 0.4mm] (0,-0.25)--++(0,1.5)--++(1,1)--++(2,0)--++(0,-1.5)--++(-1,-1)--++(-2,0);

\draw[line width = 0.4mm] (0,1.25)--++(0.5,0.5)--++(0.5,0)--++(0.5,0.5)--++(1.5,0);
\draw[line width = 0.4mm] (0,0.75)--++(0.5,0.5)--++(0.5,0)--++(0.5,0.5)--++(1.5,0);

\draw[line width = 0.4mm] (0,0.75)--++(0.5,0.5)--++(2,0)--++(0.5,0.5);
\draw[line width = 0.4mm] (0,0.25)--++(0.5,0.5)--++(2,0)--++(0.5,0.5);

\draw[line width = 0.4mm] (0,0.25)--++(1,0)--++(0.5,0.5)--++(1,0)--++(0.5,0.5);
\draw[line width = 0.4mm] (0,-0.25)--++(1,0)--++(0.5,0.5)--++(1,0)--++(0.5,0.5);

\draw[line width = 0.4mm] (0.5,-0.25)--(0.5,0.25);
\draw[line width = 0.4mm] (0.5,0.75)--(0.5,1.75);

\draw[line width = 0.4mm] (1,-0.25)--(1,0.25);
\draw[line width = 0.4mm] (1,0.75)--(1,1.75);

\draw[line width = 0.4mm] (1.5,0.25)--(1.5,1.25);
\draw[line width = 0.4mm] (1.5,1.75)--(1.5,2.25);

\draw[line width = 0.4mm] (2,0.25)--(2,1.25);
\draw[line width = 0.4mm] (2,1.75)--(2,2.25);

\draw[line width = 0.4mm] (2.5,0.25)--(2.5,1.25);
\draw[line width = 0.4mm] (2.5,1.75)--(2.5,2.25);

\draw[line width = 0.4mm] (0.5,0.25)--(1,0.75);
\draw[line width = 0.4mm] (1.5,-0.25)--(2,0.25);
\draw[line width = 0.4mm] (1.5,1.25)--(2,1.75);
\draw[line width = 0.4mm] (2,1.25)--(2.5,1.75);
\end{tikzpicture}
\caption{\label{fig:hexagon}Left: a hexagon with $2 \times 3$ periodic weightings, and $N=3$, $M=2$, and $L = 6$. Middle: a system of $N$ non-intersecting paths, and the associated points. Right: The corresponding lozenge tiling of the hexagon.}
\end{center}
\end{figure}

\noindent The main result of \cite{DK} is the following integral formula for $K$ involving a matrix valued CD kernel.\footnote{$R_{N}$ in \cite{DK} corresponds to $2\pi i\mathcal{R}_{N}^{W}$ in this paper.}

\begin{theorem}\label{thm: DK} \cite[Theorem 4.7]{DK}. Assume that $L$ is a multiple of $q$, and that $N$ and $M$ are multiples of $r$. For $x_{1},x_{2}\in \{1,\ldots,L-1\}$ and $y_{1},y_{2} \in \mathbb{Z}$, we have
\begin{multline}\label{DK formula}
[K(x_{1},ry_{1}+j,x_{2},ry_{2}+i)]_{i,j=0}^{r-1} = - \frac{\chi_{x_{1}>x_{2}}}{2\pi i} \oint_{\gamma} B_{4}(z)A(z)^{L_{3}}B_{3}(z) dz \\
+ \frac{1}{2\pi i}\oint_{\gamma} \oint_{\gamma} B_{2}(w) A(w)^{L_{2}} \mathcal{R}_{\frac{N}{r}}^{W}(w,z) A(z)^{L_{1}}B_{1}(z) dzdw,
\end{multline}
where $\gamma$ is the unit circle oriented positively,
\begin{align*}
L_{1}=\lfloor \tfrac{x_{1}}{q} \rfloor, \qquad L_{2} = \tfrac{L}{q}-\lceil \tfrac{x_{2}}{q} \rceil, \qquad L_{3} = \max \big\{\lfloor \tfrac{x_{1}}{q} \rfloor - \lceil \tfrac{x_{2}}{q} \rceil ,0 \big\}, \qquad \chi_{x_{1}>x_{2}} = \begin{cases} 
1, & \mbox{if } x_{1}>x_{2}, \\
0, & \mbox{otherwise},
\end{cases}
\end{align*} 
the matrix weight is given by
\begin{align}\label{def of W in tiling model DK}
W(z) = z^{-\frac{M+N}{r}}A(z)^{\frac{L}{q}}, \qquad A(z) = \prod_{\ell = 0}^{q-1}A_{\ell}(z),
\end{align}
and the matrices $B_{1},B_{2},B_{3},B_{4}$ are given by
\begin{align*}
& B_{1}(z) =  z^{-y_{1}-1}\prod_{\ell=q \lfloor \frac{x_{1}}{q} \rfloor}^{x_{1}-1} A_{\ell}(z), & & B_{2}(z) = \frac{z^{y_{2}}}{z^{\frac{M+N}{r}}} \prod_{\ell=x_{2}}^{q \lceil \frac{x_{2}}{q} \rceil -1} A_{\ell}(z), \\
& B_{3}(z) =z^{-y_{1}-1} \begin{cases}
\ds \prod_{\ell=q \lfloor \frac{x_{1}}{q} \rfloor}^{x_{1}-1} A_{\ell}(z), & \mbox{if } \lfloor \frac{x_{1}}{q} \rfloor \geq \lceil \frac{x_{2}}{q} \rceil, \\
I_{r}, & \mbox{otherwise},
\end{cases} & & B_{4}(z) = z^{y_{2}} \begin{cases}
\ds \prod_{\ell=x_{2}}^{q \lceil \frac{x_{2}}{q} \rceil -1} A_{\ell}(z), & \mbox{if } \lfloor \frac{x_{1}}{q} \rfloor \geq \lceil \frac{x_{2}}{q} \rceil, \\
\ds \prod_{\ell=x_{2}}^{x_{1}} A_{\ell}(z), & \mbox{otherwise}.
\end{cases}
\end{align*}
\end{theorem}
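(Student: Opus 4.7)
The plan is to combine the Lindström--Gessel--Viennot (LGV) lemma with the Eynard--Mehta theorem, and then to identify inside the resulting formula the matrix CD kernel $\mathcal{R}_{N/r}^{W}$ for the weight $W$ given in \eqref{def of W in tiling model DK}.

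First, I would encode all single-step transitions along a row of $\widehat{\mathcal{G}}_{H}$ through the contour-integral formula \eqref{integral formula}: the edges between columns $\ell$ and $\ell+1$ are governed by $A_{\ell}(z)$, so the full transition from the left boundary to the right boundary is governed by $\prod_{\ell=0}^{L-1}A_{\ell}(z)=A(z)^{L/q}$ (using that $L$ is a multiple of $q$). The vertical offset $M$ between the left and right endpoints of each path, combined with the grouping of rows into blocks of size $r$ to respect the $r$-periodicity, produces the factor $z^{-(M+N)/r}$. Together these explain the weight $W(z)=z^{-(M+N)/r}A(z)^{L/q}$, and the partition function of \eqref{prob measure over the tilings} becomes (up to sign) a block Toeplitz-type determinant built from $W$.

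Second, since the $N$ non-intersecting paths form a determinantal point process (LGV/Karlin--McGregor), the Eynard--Mehta theorem gives a correlation kernel of the schematic shape $K = -T_{x_{2}\to x_{1}}\chi_{x_{1}>x_{2}} + T_{x_{2}\to L}\,G^{-1}\,T_{0\to x_{1}}$, where $T_{a\to b}$ is the contour-integral generating function for transitions from column $a$ to column $b$, and $G$ is the $rN\times rN$ Gram matrix that records the inner products of the starting and ending configurations of the paths. Writing each $T_{a\to b}$ as an integral around $\gamma$ and splitting the range $[0,L]$ at the nearest multiples of $q$ to each of $x_{1}$ and $x_{2}$ produces precisely the bookkeeping matrices $B_{1},B_{2},B_{3},B_{4}$, which simply package the partial products $\prod_{\ell=q\lfloor x_{1}/q\rfloor}^{x_{1}-1}A_{\ell}$, $\prod_{\ell=x_{2}}^{q\lceil x_{2}/q\rceil-1}A_{\ell}$ together with the $z^{-y_{1}-1}$ and $z^{y_{2}}$ factors, along with the remaining full-period products $A(z)^{L_{1}},A(w)^{L_{2}},A(z)^{L_{3}}$. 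The first (single-integral) term on the right of \eqref{DK formula} pops out immediately from this step.

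Third, and this is the heart of the argument, I would identify the block inverse Gram matrix $G^{-1}$ that sits between $A(w)^{L_{2}}$ and $A(z)^{L_{1}}$ in the double integral with $\mathcal{R}_{N/r}^{W}(w,z)$. After the blocks of size $r$ have been factored out, what is left is the bilinear pairing of two matrix polynomials of degree $<N/r$ against $W$ along $\gamma$, that is, precisely the pairing \eqref{pairing}. The inverse of the Gram matrix of this pairing is characterized by reproducing every $P\in\mathcal{P}_{N/r-1}^{r\times r}$, so by the uniqueness statement \eqref{reproducing kernel}--\eqref{reproducing kernel 2} it equals $\mathcal{R}_{N/r}^{W}(w,z)$.

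The main obstacle is exactly this third step: one has to make the passage from the abstract Eynard--Mehta inverse Gram matrix to the matrix CD kernel fully rigorous, while keeping track of the block-of-size-$r$ structure imposed by the vertical $r$-periodicity, and one must also justify that $\mathcal{R}_{N/r}^{W}$ actually exists. The latter follows from positivity of the tiling measure --- the partition function is strictly positive because all $a_{\ell,j},b_{\ell,j}$ are --- as in \cite[Lemma 4.8]{DK}. Once these points are settled, the remainder of the proof is a careful but routine manipulation of generating functions and path endpoint indexing.
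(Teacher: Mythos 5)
This theorem is not proved in the paper you are working from: it is imported verbatim from \cite[Theorem 4.7]{DK}, and the text's only justification is that citation (plus the remark that existence of $\mathcal{R}_{N/r}^{W}$ follows from \cite[Lemma 4.8]{DK}). So there is no in-paper proof to compare against; the relevant benchmark is the original argument in \cite{DK}, and your outline does follow the same route: Lindstr\"om--Gessel--Viennot \cite{GV,L} to make the non-intersecting path ensemble determinantal, Eynard--Mehta \cite{EM} to produce a kernel of the schematic form $-\chi_{x_{1}>x_{2}}T_{x_{2}\to x_{1}}+T_{x_{2}\to L}\,G^{-1}\,T_{0\to x_{1}}$, and then identification of the inverted Gram matrix, after grouping rows into blocks of size $r$, with the reproducing kernel of the pairing \eqref{pairing} for the weight \eqref{def of W in tiling model DK}. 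In that sense your strategy is the right one and matches the cited source.

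As a proof, however, the proposal is incomplete exactly at the point you flag. The third step is not a routine appeal to the uniqueness in \eqref{reproducing kernel}--\eqref{reproducing kernel 2}: the Eynard--Mehta object $G^{-1}$ is an $N\times N$ matrix of scalars (equivalently an $(N/r)\times(N/r)$ matrix of $r\times r$ blocks), not a bivariate kernel, and one must (i) verify that for the densely packed initial and final configurations of the hexagon the Gram matrix is exactly the block moment matrix $\oint_{\gamma} w^{j+k}W(w)\,dw$ of the weight $W(z)=z^{-(M+N)/r}A(z)^{L/q}$, with the exponent $-(M+N)/r$ coming out of the endpoint positions, (ii) show that it is invertible (equivalently that the partition function is nonzero, which is where positivity of the $a_{\ell,j},b_{\ell,j}$ enters and which is the content of \cite[Lemma 4.8]{DK}), and (iii) resum $\sum_{j,k}(G^{-1})_{j,k}$ against the monomial factors so that the sandwiched object is a bivariate matrix polynomial of degree $\leq N/r-1$ in each variable satisfying \eqref{reproducing kernel}; only then does uniqueness identify it with $\mathcal{R}_{N/r}^{W}(w,z)$. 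This is precisely the technical core of the argument in \cite{DK}, and your sketch names it rather than supplies it. Likewise, the decomposition of the partial products into $B_{1},\dots,B_{4}$ and $A^{L_{1}},A^{L_{2}},A^{L_{3}}$, including the case distinction $\lfloor x_{1}/q\rfloor<\lceil x_{2}/q\rceil$ in the single-integral term of \eqref{DK formula}, is asserted as bookkeeping but not carried out. So: correct strategy, consistent with the source the paper cites, but the heart of the proof is still missing.
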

%\begin{remark}
%To limit the length of this section we have decided to restrict ourselves to lozenge tilings of a hexagon. However, we emphasize that the formula from \cite{DK} (and hence Proposition \ref{prop: double contour integrals simplifications}) applies to a larger class of determinantal processes.
%\end{remark}
\begin{remark}
%As mentioned in the introduction, the existence of $\mathcal{R}_{N/r}^{W}$ is in general not guaranteed.
In this set-up, the existence of $\mathcal{R}_{N/r}^{W}$ is ensured from \cite[Lemma 4.8]{DK}.
\end{remark}
If one desires to understand the fine asymptotic structure of the lozenge tiling model under consideration, one usually needs to obtain asymptotics for $K(x_{1},y_{1},x_{2},y_{2})$ as $L, M, N \to + \infty$, and simultaneously $x_{1},y_{1},x_{2},y_{2} \to + \infty$ at certain critical speeds. Since the matrix products that appear in $B_{1},\ldots,B_{4}$ always involve at most $q-1$ matrices, the $B_{j}$'s are not an obstacle to an asymptotic analysis. Slightly more problematic is the fact that $L_{1},L_{2},L_{3} \to + \infty$ in this regime, and that these quantities are exponents of $A$ in \eqref{DK formula}. But this, in itself, is also not a serious obstacle if $A$ is diagonalizable. Since $W$ depends on the large parameters $L, M$ and $N$, see \eqref{def of W in tiling model DK}, the real challenge is to obtain asymptotics of $\mathcal{R}_{N/r}^{W}$, especially when $W$ is matrix-valued. Theorems \ref{thm: reproducing kernel Riemann surface} and \ref{thm: reprod Rcal U} allows to circumvent (or simplify) this serious technical obstacle, provided $A$ satisfies the following assumption. %This may represent a serious technical obstacle for an asymptotic analysis.
\begin{assumption}\label{ass: matrix A}
The rational $r \times r$ matrix valued function $z \mapsto A(z)$ has no pole on $\gamma$ and is diagonalizable for all but finitely many $z \in \mathbb{C}$.
\end{assumption}
Note that, if $W$ is of the form $W(z) = z^{-\frac{M+N}{r}}A(z)^{\frac{L}{q}}$ and if Assumption \ref{ass: matrix A} holds, then automatically Assumption \ref{ass: weight diag} holds as well.

Since $A$ is rational, its eigenvalues $\widehat{\lambda}_{1}(z),\ldots,\widehat{\lambda}_{r}(z)$ are (branches of) meromorphic functions on $\mathbb{C}$, which together define a meromorphic function $\widehat{\lambda}$ on an $r$-sheeted compect Riemann surface $\mathcal{M}$. For definiteness, we choose the numbering of the sheets such that 
\begin{align}\label{def of lambda hat}
& \z \mapsto \widehat{\lambda}(\z) = \widehat{\lambda}_{k}(z), & & \mbox{if $\z$ is on the $k$-th sheet of $\mathcal{M}$}.
\end{align}
Assumption \ref{ass: matrix A} implies that for all but finitely many $z \in \mathbb{C}$, we can write 
\begin{align}\label{diagonal form of A}
A(z) =  E(z) \widehat{\Lambda}(z) E(z)^{-1}, \qquad \widehat{\Lambda}(z) = \diag(\widehat{\lambda}_{1}(z),\ldots,\widehat{\lambda}_{r}(z)),
\end{align}
and as in Section \ref{section: introduction}, we can (and do) choose the matrix of eigenvectors $E$ such that the functions $\e$ and $\widehat{\e}$ defined in \eqref{def of lambda RS}--\eqref{def of efrak inv RS} are meromorphic on $\mathcal{M}$. Since
\begin{align*}
W(z) = z^{-\frac{M+N}{r}}A(z)^{\frac{L}{q}}, \qquad \mbox{we have} \qquad \lambda_{k}(z)=z^{-\frac{M+N}{r}}\widehat{\lambda}_{k}(z)^{\frac{L}{q}}, \quad k=1,\ldots,r,
\end{align*}
and therefore the function $\lambda$ defined in \eqref{def of lambda RS} is also meromorphic on $\mathcal{M}$.

\medskip Theorem \ref{thm: double contour integrals simplifications} below is our general result for tiling models with doubly periodic weightings. If $\mathcal{M}$ is a connected Riemann surface of genus $0$, then formula \eqref{I simplified in thm} provides a new double contour formula representation of $K$. The main advantage of this formula is that all the quantities involving the (potentially large) parameters $L$, $M$ and $N$ are now scalar. If $\mathcal{M}$ has either no genus (this is the case if $\mathcal{M}$ is not a connected Riemann surface), or if $\mathcal{M}$ has genus $1$ or more, then formula \eqref{I simplified in thm} does not apply. In this case we instead give a new representation of the kernel which involve a scalar kernel in a Riemann surface, see formula \eqref{simplif I on Riemann surface} below. Here too, all the quantities involving the parameters $L$, $M$ and $N$ are scalar. Therefore, we expect that this formula also leads to a simpler asymptotic analysis than \eqref{DK formula}, although we admit that this still remains to be shown in a concrete situation. 

\begin{theorem}\label{thm: double contour integrals simplifications}
Suppose that $A$ satisfies Assumption \ref{ass: matrix A}. Let $\mathcal{M}$ be an $r$-sheeted Riemann surface such that the functions $\widehat{\lambda}$, $\e$ and $\widehat{\e}$ defined in  \eqref{def of lambda hat}, \eqref{def of efrak RS}, \eqref{def of efrak inv RS} are meromorphic, and define $R_{N/r}^{\lambda}$ as in \eqref{def of RM}. The formula \eqref{DK formula} can be simplified as follows:
\begin{align}
[K(x_{1},ry_{1}+j,x_{2},ry_{2}+i)]_{i,j=0}^{r-1} = & \; \frac{1}{2\pi i} \oint_{\gamma_{\mathcal{M}}}\oint_{\gamma_{\mathcal{M}}} \widehat{\lambda}(\w)^{L_{2}} \widehat{\lambda}(\z)^{L_{1}} B_{2}(w)\e(\w)  R_{N/r}^{\lambda}(\w,\z) \widehat{\e}(\z) B_{1}(z) dz dw \nonumber \\
& - \frac{\chi}{2\pi i} \oint_{\gamma_{\mathcal{M}}} \widehat{\lambda}(\z)^{L_{3}} B_{4}(z)\e(\z) \widehat{\e}(\z)B_{3}(z) dz. \label{simplif I on Riemann surface}
\end{align}
If furthermore $\mathcal{M}$ is of genus $0$, then the right-hand-side of \eqref{simplif I on Riemann surface} can be further simplified into
\begin{align}
& \frac{1}{2\pi i} \oint_{\gamma_{\mathbb{C}}}\oint_{\gamma_{\mathbb{C}}} \frac{\mathfrak{R}_{N}^{\mathcal{W}}(\omega,\zeta)}{\widehat{h}(\omega)h(\zeta)} \widehat{\lambda}(\varphi(\omega))^{L_{2}} \widehat{\lambda}(\varphi(\zeta))^{L_{1}} \phi'(\omega)\phi'(\zeta) B_{2}(\phi(\omega))\e(\varphi(\omega))  \widehat{\e}(\varphi(\zeta)) B_{1}(\phi(\zeta)) d\zeta d\omega \nonumber \\
& - \frac{\chi}{2\pi i} \oint_{\gamma_{\mathbb{C}}} \widehat{\lambda}(\varphi(\zeta))^{L_{3}} B_{4}(\phi(\zeta))\e(\varphi(\zeta))\widehat{\e}(\varphi(\zeta))B_{3}(\phi(\zeta)) \phi'(\zeta)d\zeta, \label{I simplified in thm}
\end{align}
where $\mathfrak{R}_{N}^{\mathcal{W}}$, $h$, $\widehat{h}$, $\varphi$, $\phi$, $\mathcal{W}$, $\gamma_{\mathbb{C}}$ are defined in Definitions \ref{def: varphi}--\ref{def: scalar weight}.
\end{theorem}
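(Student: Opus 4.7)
The strategy is to reduce each of the two terms of \eqref{DK formula} to its counterpart in \eqref{simplif I on Riemann surface} by inserting the eigenvalue decomposition of $A$, and then, under the genus $0$ assumption, to obtain \eqref{I simplified in thm} by the change of variables $\varphi$ together with \eqref{def of RU}. The basic identity I will use is that, from \eqref{diagonal form of A},
\begin{align*}
A(z)^{k} = \sum_{j=1}^{r}\widehat{\lambda}_{j}(z)^{k}\,E(z)\,e_{j}e_{j}^{T}\,E(z)^{-1} = \sum_{j=1}^{r}\widehat{\lambda}(z^{(j)})^{k}\,\e(z^{(j)})\,\e^{-1}(z^{(j)})
\end{align*}
for any integer $k\geq 0$, where the second equality uses \eqref{def of lambda hat} and \eqref{def of efrak RS}--\eqref{def of efrak inv RS}. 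Combined with the tautology $\oint_{\gamma_{\mathcal{M}}} f(\z)\,dz = \sum_{j=1}^{r}\oint_{\gamma}f(z^{(j)})\,dz$, this turns integrals over $\gamma$ involving $A^{k}$ into integrals over $\gamma_{\mathcal{M}}$.

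Applied to the single-integral term of \eqref{DK formula}, the substitution yields the single-integral term of \eqref{simplif I on Riemann surface} at once. For the double-integral term, I would expand both $A(w)^{L_{2}}$ and $A(z)^{L_{1}}$ in the same way, pull the two resulting finite sums outside the double integral, and then use \eqref{def of RM} in the form
\begin{align*}
\e^{-1}(\w)\,\mathcal{R}_{N/r}^{W}(w,z)\,\e(\z) = R_{N/r}^{\lambda}(\w,\z).
\end{align*}
The resulting sum of double integrals over $\gamma\times \gamma$ is, by the same tautology, precisely the double integral over $\gamma_{\mathcal{M}}\times \gamma_{\mathcal{M}}$ in \eqref{simplif I on Riemann surface}. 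Note that Assumption \ref{ass: matrix A} implies Assumption \ref{ass: weight diag} for the weight $W = z^{-(M+N)/r}A^{L/q}$, and that \cite[Lemma 4.8]{DK} ensures the existence of $\mathcal{R}_{N/r}^{W}$, so Theorem \ref{thm: reproducing kernel Riemann surface} applies and $R_{N/r}^{\lambda}$ is well-defined.

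To derive \eqref{I simplified in thm} under the genus $0$ hypothesis, I would parametrize $\gamma_{\mathcal{M}}$ by $\varphi$, writing $\w = \varphi(\omega)$, $\z = \varphi(\zeta)$, so that $\gamma_{\mathcal{M}}$ is sent to $\gamma_{\mathbb{C}} = \varphi^{-1}(\gamma_{\mathcal{M}})$ and $dw = \phi'(\omega)\,d\omega$, $dz = \phi'(\zeta)\,d\zeta$. Rearranging \eqref{def of RU} as
\begin{align*}
R_{N/r}^{\lambda}(\varphi(\omega),\varphi(\zeta)) = \frac{\mathfrak{R}_{N}^{\mathcal{W}}(\omega,\zeta)}{\widehat{h}(\omega)\,h(\zeta)}
\end{align*}
and substituting then gives \eqref{I simplified in thm}: the factor $1/(\widehat{h}h)$ appears in the double-integral term, while the single-integral term -- which does not feature $R_{N/r}^{\lambda}$ -- picks up only the Jacobian $\phi'$.

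The proof is thus a sequence of algebraic substitutions, and the main obstacle is really the bookkeeping of sheets, projections, and differentials. The only analytic point deserving attention is that the identity displayed above for $A(z)^{k}$ holds a priori only off the finite exceptional set where $A$ fails to be diagonalizable; but $A(z)^{k}$ is continuous on $\gamma$ by Assumption \ref{ass: matrix A}, and the right-hand side extends continuously to the exceptional points as well, so the identity propagates to all of $\gamma$ and the integrals are unaffected.
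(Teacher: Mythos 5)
Your proposal is correct and follows essentially the same route as the paper: insert the spectral decomposition $A^{k}=\sum_{j}\widehat{\lambda}(z^{(j)})^{k}\e(z^{(j)})\e^{-1}(z^{(j)})$ (the paper keeps $E\widehat{\Lambda}^{k}E^{-1}$ and expands $\widehat{\Lambda}^{k}=\sum_{j}e_{j}e_{j}^{T}\widehat{\lambda}(z^{(j)})^{k}$, which is the same computation), identify the sandwiched kernel via \eqref{def of RM}, rewrite the sums over sheets as integrals over $\gamma_{\mathcal{M}}$, and in the genus $0$ case apply \eqref{def of RU} with the change of variables $\z=\varphi(\zeta)$, $\w=\varphi(\omega)$ and the Jacobians $\phi'$. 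Your closing remark on the finitely many non-diagonalizable points is a harmless extra precaution not present in the paper's argument.
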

\begin{proof}
Substituting $A(z) = E(z) \widehat{\Lambda}(z) E(z)^{-1}$ in \eqref{DK formula}, we get 
\begin{align*}
& [K(x_{1},ry_{1}+j,x_{2},ry_{2}+i)]_{i,j=0}^{r-1} = - \frac{\chi}{2\pi i} \oint_{\gamma} B_{4}(z)E(z)\widehat{\Lambda}(z)^{L_{3}} E(z)^{-1}B_{3}(z) dz \\
& +\frac{1}{2\pi i}\oint_{\gamma}\oint_{\gamma}B_{2}(w)E(w) \widehat{\Lambda}(w)^{L_{2}}E(w)^{-1} \mathcal{R}_{N/r}^{W}(w,z)E(z)\widehat{\Lambda}(z)^{L_{1}} E(z)^{-1} B_{1}(z) dzdw.
\end{align*}
Note that for any integer $x$, we can write 
\begin{align*}
\widehat{\Lambda}(z)^{x} = \sum_{j=1}^{r} e_{j}e_{j}^{T} \widehat{\lambda}_{j}(z)^{x} = \sum_{j=1}^{r} e_{j}e_{j}^{T} \widehat{\lambda}(z^{(j)})^{x}.
\end{align*}
Hence, using \eqref{def of RM}, we obtain
\begin{align*}
& [K(x_{1},ry_{1}+j,x_{2},ry_{2}+i)]_{i,j=0}^{r-1} = - \frac{\chi}{2\pi i} \sum_{j=1}^{r} \oint_{\gamma} B_{4}(z)\e(z^{(j)})\widehat{\lambda}(z^{(j)})^{L_{3}} \widehat{\e}(z^{(j)})B_{3}(z) dz  \;  \\
& +\frac{1}{2\pi i} \sum_{j=1}^{r}\sum_{k=1}^{r} \oint_{\gamma}\oint_{\gamma}B_{2}(w)\e(w^{(k)})\widehat{\lambda}(w^{(k)})^{L_{2}}R_{N/r}^{\lambda}(w^{(k)},z^{(j)})\widehat{\lambda}(z^{(j)})^{L_{1}} \widehat{\e}(z^{(j)}) B_{1}(z) dzdw,
\end{align*}
which is equivalent to \eqref{simplif I on Riemann surface} (recal that $\gamma_{\mathcal{M}} = \cup_{j=1}^{r} \gamma^{(j)}$). The formula \eqref{I simplified in thm} then directly follows from \eqref{def of RU}, the change of variables $\z = \varphi(\zeta)$ and $\w = \varphi(\omega)$, and the definition $\gamma_{\mathbb{C}} = \varphi^{-1}(\gamma_{\mathcal{M}})$.
\end{proof}

%In \eqref{simplif I on Riemann surface} and \eqref{I simplified in thm}, the quantity $\widehat{\lambda}$ exponentiated to the (potentially large) parameters $L_{1},L_{2},L_{3}$ are scalar, so we expect these formulas are more easy to deal for an asymptotic analysis than \eqref{general double contour integrals}. 
%Some applications of Theorem \ref{thm: double contour integrals simplifications} to the theory of tiling models with doubly periodic weightings are provided in Section \ref{section: applications}. 

% In this formula, the integrand involves a matrix valued CD kernel. The goal of this section is to illustrate how Theorem \ref{thm: double contour integrals simplifications} can be used to simplify this formula for a subclass of these models. In this section, we show that this formula in such a way that Theorem \ref{thm: double contour integrals simplifications} can be used to simplify this formula for a subclass of these models. In Subsections \ref{subsection: 2x1 periodic weightings}, \ref{subsection: 2x2 periodic weightings} and \ref{subsection: uniform measure}, we give three applications of Theorem \ref{thm: double contour integrals simplifications} to lozenge tilings of a hexagon. 
%\begin{remark}
% An advantage of formulas \eqref{simplif I on Riemann surface} and \eqref{I simplified in thm} of Proposition \ref{prop: double contour integrals simplifications} is that $\mathcal{I}$ is expressed in terms of a scalar valued CD kernel, and the (potentially large) parameters $L_{1},L_{2},L_{3}$ are power of scalar valued functions. 
%\end{remark}

In the rest of this section, we give three applications of Theorem \ref{thm: double contour integrals simplifications}. %The computational details of each of these applications depend heavily on the model at hand and are divided in the 

%to limit the length of this section we have decided to restrict ourselves to three applications related to lozenge tilings of a hexagon. However, we emphasize that the formula from \cite{DK} (and hence Proposition \ref{prop: double contour integrals simplifications}) applies to a larger class of determinantal processes. We first give a brief presentation on lozenge tilings of a hexagon with doubly periodic weightings. 

\subsection{The uniform measure viewed as an $r \times 1$ periodic weighting}\label{subsection: uniform measure}
The uniform measure over the lozenge tilings of the hexagon has already been extensively studied \cite{Jptrf,BKMM,Gorin,Aggarwal}. It corresponds to the case where all edges of $\mathcal{G}_{H}$ are assigned the same positive number, say $1$. In the terminology introduced at the beginning of this section, this is a model with a $1 \times 1$ periodic weighting. Therefore, the correlation kernel of this model is given by Theorem \ref{thm: DK} with $r=q=1$ and $A(z)=A_{0}(z) = 1+z$:
\begin{align}
K(x_{1},y_{1},x_{2},y_{2}) = & \frac{1}{2\pi i} \oint_{\gamma}\oint_{\gamma} \frac{(1+w)^{L-x_{2}}}{w^{M+N-y_{2}}}\mathcal{R}_{N}^{\widetilde{W}}(w,z) \frac{(1+z)^{x_{1}}}{z^{y_{1}+1}}dzdw \nonumber \\
& - \frac{\chi_{x_{1}>x_{2}}}{2\pi i}\oint_{\gamma} (1+z)^{x_{1}-x_{2}}z^{y_{2}-y_{1}-1}dz, \label{unif kernel scalar form}
\end{align}
where $\widetilde{W}(z) = (1+z)^{L}z^{-M-N}$ and $\gamma$ is the unit circle. In this subsection, we do not provide new results, but we provide an example which we believe illustrates well how to use Theorem \ref{thm: double contour integrals simplifications}. Since the uniform measure is $1 \times 1$ periodic, it can in particular be viewed as a model with an $r \times 1$ periodic weighting, $r \geq 2$. Assume that $M$ and $N$ are multiple of $r$. By \eqref{matrix Al}, the transition matrix $A(z)=A_{0}(z)$ is given by
\begin{align}\label{A rx1}
A(z) = \begin{pmatrix}
1 & 1 & 0 & 0 & \cdots & 0 & 0 \\
0 & 1 & 1 & 0 & \cdots & 0 & 0 \\
\vdots & \vdots & \vdots & \vdots & \ddots & \vdots & \vdots \\
0 & 0 & 0 & 0 & \cdots & 1 & 1 \\
z & 0 & 0 & 0 & \cdots & 0 & 1
\end{pmatrix}.
\end{align}
Applying Theorem \ref{thm: DK} to this case, we obtain that for any $x_{1},x_{2} \in \{1,\ldots,L-1\}$ and $y_{1},y_{2} \in \mathbb{Z}$,
\begin{align*}
[K(x_{1},ry_{1}+j,x_{2},ry_{2}+i)]_{i,j=0}^{r-1} = & \, \frac{1}{2\pi i}\oint_{\gamma} \oint_{\gamma}  \frac{A(w)^{L-x_{2}}}{w^{\frac{M+N}{r}-y_{2}}} \mathcal{R}_{N/r}^{W}(w,z) \frac{A(z)^{x_{1}}}{z^{y_{1}+1}} dzdw  \\
&  - \frac{\chi_{x_{1}>x_{2}}}{2\pi i} \oint_{\gamma} A(z)^{x_{1}-x_{2}}z^{y_{2}-y_{1}-1} dz, \qquad\qquad 
\end{align*}
or equivalently, 
\begin{align}
K(x_{1},y_{1},x_{2},y_{2}) = & \frac{1}{2\pi i}\oint_{\gamma}\oint_{\gamma} e_{y_{2}-r\lfloor \frac{y_{2}}{r} \rfloor +1}^{T}\frac{A(w)^{L-x_{2}}}{w^{\frac{M+N}{r}- \lfloor \frac{y_{2}}{r} \rfloor }} \mathcal{R}_{N/r}^{W}(w,z) \frac{A(z)^{x_{1}}}{z^{\lfloor \frac{y_{1}}{r} \rfloor +1}}e_{y_{1}-r\lfloor \frac{y_{1}}{r} \rfloor +1} dzdw \nonumber \\
& - \frac{\chi_{x_{1}>x_{2}}}{2\pi i}\oint_{\gamma} e_{y_{2}-r\lfloor \frac{y_{2}}{r} \rfloor +1}^{T} A(z)^{x_{1}-x_{2}} e_{y_{1}-r \lfloor \frac{y_{1}}{r} \rfloor +1} z^{\lfloor \frac{y_{2}}{r} \rfloor-\lfloor \frac{y_{1}}{r} \rfloor-1} dz, \label{unif kernel matrix form}
\end{align}
where $W(z) = z^{-\frac{M+N}{r}}A(z)^{L}$ and we recall that $e_{j}$ denotes the $j$-th column of $I_{r}$. Note that $W$ is exactly the matrix weight written in \eqref{W example rxr} (see also Example \ref{example: scalar 2}) after replacing $R$ by $\frac{M+N}{r}$. Since the right-hand sides of \eqref{unif kernel scalar form} and \eqref{unif kernel matrix form} are two correlation kernels of the same determinantal process, they must obviously be related (though a priori not necessarily equal\footnote{The correlation kernel of a given point process is not unique. For example, the kernels $K$ and $\tilde{K}(x_{1},y_{1},x_{2},y_{2}):= K(x_{1},y_{1},x_{2},y_{2}) \frac{f_{1}(x_{1})}{f_{1}(x_{2})}\frac{f_{2}(y_{1})}{f_{2}(y_{2})}$, where $f_{1}$ and $f_{2}$ are arbitrary non-vanishing functions, define the same point process, because the determinants in \eqref{def of determinantal} remain unchanged.}). However, this is fairly non-trivial task to verify this directly from the formulas. Here, we provide a direct proof for this fact using Theorem \ref{thm: double contour integrals simplifications}. %Note also that this example is a clear case where the matrix valued CD kernel $\mathcal{R}_{N}^{W}$ should be related to the scalar valued CD kernel $\mathcal{R}_{N}^{\widetilde{W}}$ (see Example \ref{example: scalar 2}). 
\begin{proposition}
The right-hand sides of \eqref{unif kernel scalar form} and \eqref{unif kernel matrix form} are equal.
\end{proposition}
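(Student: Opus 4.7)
The plan is to apply Theorem \ref{thm: double contour integrals simplifications} to the right-hand-side of \eqref{unif kernel matrix form} using the Riemann surface data already worked out in Example \ref{example: scalar 2}. This will reduce the matrix CD kernel $\mathcal{R}_{N/r}^{W}$ to the scalar CD kernel $\mathcal{R}_{N}^{\widetilde{W}}$ appearing in \eqref{unif kernel scalar form}, after which the two sides can be matched entry by entry.

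More precisely, in the setting of Example \ref{example: scalar 2} (with $R = (M+N)/r$ and $L$ unchanged), the Riemann surface $\mathcal{M} = \{(z,\eta):\eta^{r}=z\}$ has genus $0$, and one has the explicit formulas
\[
\varphi(\zeta) = (\zeta^{r},\zeta), \qquad \phi(\zeta) = \zeta^{r}, \qquad h(\zeta)=1, \qquad \widehat{h}(\zeta) = \zeta^{r-1},
\]
\[
\widehat{\lambda}(\varphi(\zeta)) = 1+\zeta, \qquad \e(\varphi(\zeta)) = (1,\zeta,\ldots,\zeta^{r-1})^{T}, \qquad \e^{-1}(\varphi(\zeta)) = \tfrac{1}{r}(1,\zeta^{-1},\ldots,\zeta^{-(r-1)}),
\]
together with $\gamma_{\mathbb{C}}=\gamma$ (the unit circle) and $\mathcal{W}(\zeta) = r\zeta^{-(M+N)}(1+\zeta)^{L} = r\widetilde{W}(\zeta)$. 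Since $-\sum_{\z\in\mathcal{Z}\cup\mathcal{Q}}n_{\z}=r-1$, criterion (e) of Theorem \ref{thm: reprod Rcal U} gives $\mathfrak{R}_{N}^{\mathcal{W}} = \mathcal{R}_{N}^{\mathcal{W}}$, and the linearity of the reproducing property in the weight then yields $\mathcal{R}_{N}^{\mathcal{W}} = r^{-1}\mathcal{R}_{N}^{\widetilde{W}}$.

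Next, I would instantiate the data in Theorem \ref{thm: double contour integrals simplifications} for this model: with $q=1$ and $A_{\ell}=A$, all products appearing in $B_{1},B_{2},B_{3},B_{4}$ collapse and one gets $B_{1}(z)=z^{-y_{1}-1}I_{r}$, $B_{2}(z)=z^{y_{2}-(M+N)/r}I_{r}$, $B_{3}(z)=z^{-y_{1}-1}I_{r}$ and $B_{4}(z)=z^{y_{2}}I_{r}$ (the case distinction in $B_{3},B_{4}$ is irrelevant since the single-integral term is activated only when $x_{1}>x_{2}$, where $\lfloor x_{1}\rfloor\geq\lceil x_{2}\rceil$). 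Plugging into \eqref{I simplified in thm} and reading off the $(i,j)$-entry of the matrix via $(\e(\varphi(\omega))\e^{-1}(\varphi(\zeta)))_{ij}=\tfrac{1}{r}\omega^{i}\zeta^{-j}$, the double integral term becomes
\[
\frac{1}{2\pi i}\oint_{\gamma}\oint_{\gamma} \mathcal{R}_{N}^{\widetilde{W}}(\omega,\zeta)\,\frac{(1+\omega)^{L-x_{2}}}{\omega^{(M+N)-(ry_{2}+i)}}\,\frac{(1+\zeta)^{x_{1}}}{\zeta^{(ry_{1}+j)+1}}\,d\zeta\,d\omega,
\]
after using $\phi'(\omega)\phi'(\zeta)=r^{2}\omega^{r-1}\zeta^{r-1}$ and the relation $\mathcal{R}_{N}^{\mathcal{W}} = r^{-1}\mathcal{R}_{N}^{\widetilde{W}}$ to cancel the factors of $r$ and of $\omega^{r-1}$ coming from $\widehat{h}$. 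A parallel and simpler calculation with $\widehat{\lambda}(\varphi(\zeta))^{L_{3}}=(1+\zeta)^{x_{1}-x_{2}}$ reduces the single integral term to
\[
-\frac{\chi_{x_{1}>x_{2}}}{2\pi i}\oint_{\gamma}(1+\zeta)^{x_{1}-x_{2}}\,\zeta^{(ry_{2}+i)-(ry_{1}+j)-1}\,d\zeta.
\]
Setting $y_{1}':=ry_{1}+j$ and $y_{2}':=ry_{2}+i$, these are exactly the two terms on the right-hand-side of \eqref{unif kernel scalar form} with $(y_{1},y_{2})$ replaced by $(y_{1}',y_{2}')$, which is the claim.

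The main (and essentially only) obstacle is book-keeping: one must be careful that the factor $r$ from $\phi'$, the factor $\tfrac{1}{r}$ from $\e^{-1}$, the factor $\omega^{r-1}$ from $\widehat{h}$ and the factor $r$ relating $\mathcal{W}$ to $\widetilde{W}$ all combine correctly so that no residual powers of $\omega$ or $\zeta$ remain, and that the resulting exponents of $\omega$ and $\zeta$ read as $y_{2}'-(M+N)$ and $-(y_{1}'+1)$. Once this accounting is done, the identification with \eqref{unif kernel scalar form} is immediate.
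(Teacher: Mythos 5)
Your proposal is correct and follows essentially the same route as the paper: apply Theorem \ref{thm: double contour integrals simplifications} with the genus-zero data computed in Example \ref{example: scalar 2} (with $R=\tfrac{M+N}{r}$), use $\mathcal{W}=r\widetilde{W}$ to get $\mathcal{R}_{N}^{\mathcal{W}}=\tfrac{1}{r}\mathcal{R}_{N}^{\widetilde{W}}$, and cancel the factors of $r$, $\omega^{r-1}$ and $\zeta^{r-1}$ so that the exponents reassemble into $y_{2}'-(M+N)$ and $-(y_{1}'+1)$. The index bookkeeping you sketch matches the paper's computation, so no gap remains.
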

\begin{proof} 
This proof relies on the computation done in Example \ref{example: scalar 2}. Since $\mathcal{M}$ is of genus $0$, we infer from Theorem \ref{thm: double contour integrals simplifications} that the right-hand side of \eqref{unif kernel matrix form} can be rewritten as
\begin{align*}
& \frac{1}{2\pi i} \oint_{\gamma}\oint_{\gamma} \frac{\mathcal{R}_{N}^{\mathcal{W}}(\omega,\zeta)}{\widehat{h}(\omega)h(\zeta)} \frac{\widehat{\lambda}(\varphi(\omega))^{L-x_{2}}}{\phi(\omega)^{\frac{M+N}{r}- \lfloor \frac{y_{2}}{r} \rfloor }}   \frac{\widehat{\lambda}(\varphi(\zeta))^{x_{1}}}{\phi(\zeta)^{\lfloor \frac{y_{1}}{r} \rfloor +1 }} e_{y_{2}-r\lfloor \frac{y_{2}}{r} \rfloor +1}^{T}\e(\varphi(\omega))  \widehat{\e}(\varphi(\zeta)) e_{y_{1}-r\lfloor \frac{y_{1}}{r} \rfloor +1} \phi'(\omega)\phi'(\zeta)d\zeta d\omega \\
& - \frac{\chi_{x_{1}>x_{2}}}{2\pi i}\oint_{\gamma} \widehat{\lambda}(\varphi(\zeta))^{x_{1}-x_{2}} e_{y_{2}-r\lfloor \frac{y_{2}}{r} \rfloor +1}^{T} \e(\varphi(\zeta))\widehat{\e}(\varphi(\zeta))  e_{y_{1}-r\lfloor \frac{y_{1}}{r} \rfloor +1} \phi(\zeta)^{\lfloor \frac{y_{2}}{r} \rfloor-\lfloor \frac{y_{1}}{r} \rfloor-1} \phi'(\zeta) d\zeta,
\end{align*}
where we have also used that $\mathfrak{R}_{N}^{W}=\mathcal{R}_{N}^{\mathcal{W}}$ and $\gamma_{\mathbb{C}} = \gamma$. Using the explicit expressions for $\e$, $\widehat{\e}$, $\lambda$, $\varphi$, $\phi$, $h$ and $\widehat{h}$ from Example \ref{example: scalar 2}, and noting that $\widehat{\lambda}((z,\eta))=1+ \eta$, this gives
\begin{align}
& \frac{1}{2\pi i} \oint_{\gamma}\oint_{\gamma} \frac{\mathcal{R}_{N}^{\mathcal{W}}(\omega,\zeta)}{\omega^{r-1}} \frac{(1+\omega)^{L-x_{2}}}{\omega^{M+N- r\lfloor \frac{y_{2}}{r} \rfloor }}  \frac{(1+\zeta)^{x_{1}}}{\zeta^{r\lfloor \frac{y_{1}}{r} \rfloor +r }} \omega^{y_{2}-r\lfloor \frac{y_{2}}{r} \rfloor}  \zeta^{-y_{1}+r\lfloor \frac{y_{1}}{r} \rfloor} \frac{1}{r} r \omega^{r-1} r \zeta^{r-1} d\zeta d\omega \nonumber \\
& - \frac{\chi_{x_{1}>x_{2}}}{2\pi i}\oint_{\gamma} (1+\zeta)^{x_{1}-x_{2}} \zeta^{y_{2}-r\lfloor \frac{y_{2}}{r} \rfloor} \frac{\zeta^{-y_{1}+r\lfloor \frac{y_{1}}{r} \rfloor }}{r} \zeta^{r\lfloor \frac{y_{2}}{r} \rfloor-r\lfloor \frac{y_{1}}{r} \rfloor-r} r \zeta^{r-1} d\zeta. \label{lol3}
\end{align}
Note that the functions $\widetilde{W}$ and $\mathcal{W}$, defined respectively below \eqref{unif kernel scalar form} and in \eqref{weight of example 1.18}, are related by $\widetilde{W}(z) = \frac{1}{r}\mathcal{W}(z)$. Therefore, it is straightforward to deduce from \eqref{reproducing kernel} that $\mathcal{R}_{N}^{\mathcal{W}}(\omega,\zeta)=\frac{1}{r}\mathcal{R}_{N}^{\widetilde{W}}(\omega,\zeta)$. Substituting this relation into \eqref{lol3} and simplifying, we find \eqref{unif kernel scalar form}.
\end{proof}

\subsection{Lozenge tiling models with a $2 \times 1$ periodic weighting}\label{subsection: 2x1 periodic weightings}

The main result of this subsection is an explicit new double contour integral formula for the kernel of an arbitrary lozenge tiling model with a $2 \times 1$ periodic weighting. In this formula, the integrand only involves a scalar CD kernel. 

\medskip Let us set $r=2$ and $q=1$ in \eqref{weight on a block}--\eqref{periodic weightings}. In the most general setting, the transition matrix $A(z)=A_{0}(z)$ is given by
\begin{align}\label{A for 2x1}
A(z) = \begin{pmatrix}
b_{0} & a_{0} \\ a_{1}z & b_{1}
\end{pmatrix}, \qquad a_{0},a_{1},b_{0},b_{1}>0,
\end{align}
where we have written $a_{j},b_{j}$ for the edge weights instead of $a_{0,j},b_{0,j}$ to lighten the notation. 
 Let us assume that $M$ and $N$ are multiples of $2$. We know from Theorem \ref{thm: DK} that the kernel $K$ associated to \eqref{prob measure over the tilings} is given by 
\begin{multline}\label{DK formula 2x1}
[K(x_{1},ry_{1}+j,x_{2},ry_{2}+i)]_{i,j=0}^{1} = - \frac{\chi_{x_{1}>x_{2}}}{2\pi i} \oint_{\gamma} A(z)^{x_{1}-x_{2}}z^{y_{2}-y_{1}-1} dz \\
+ \frac{1}{2\pi i}\oint_{\gamma} \oint_{\gamma} A(w)^{L-x_{2}} \mathcal{R}_{N/2}^{W}(w,z) A(z)^{x_{1}} \frac{dzdw}{w^{\frac{M+N}{2}-y_{2}}z^{y_{1}+1}},
\end{multline}
for $x_{1},x_{2} \in \{1,...,L-1\}$ and $y_{1},y_{2} \in \mathbb{Z}$, with $W(z) = z^{-\frac{M+N}{2}}A(z)^{L}$. The above formula involves a CD kernel of size $2 \times 2$. Theorem \ref{thm: double contour integrals simplifications} allows to simplify \eqref{DK formula 2x1} as follows.
\begin{theorem}\label{thm: 2x1}
For $x_{1},x_{2} \in \{1,\ldots,L-1\}$ and $y_{1},y_{2} \in \mathbb{Z}$, we have
\begin{multline*}
\hspace{-0.3cm}[K(x_{1},2y_{1}+j,x_{2},2y_{2}+i)]_{i,j=0}^{1} = - \frac{\chi_{x_{1}>x_{2}}}{2\pi i} \oint_{\gamma_{\mathbb{C}}} \frac{\big( \frac{b_{0}+b_{1}+\zeta}{2} \big)^{x_{1} - x_{2}}}{\big(\frac{\zeta^{2}-(b_{0}-b_{1})^{2}}{4a_{0}a_{1}}\big)^{y_{1}-y_{2}+1}} \begin{pmatrix}
\frac{\zeta-b_{1}+b_{0}}{2} & a_{0} \\
\frac{\zeta^{2}-(b_{1}-b_{0})^{2}}{4a_{0}} & \frac{\zeta + b_{1}-b_{0}}{2}
\end{pmatrix}  \frac{d\zeta}{2a_{0}a_{1}} \\
+ \frac{1}{2\pi i} \oint_{\gamma_{\mathbb{C}}}\oint_{\gamma_{\mathbb{C}}}  \frac{\mathcal{R}_{N}^{\mathcal{W}}(\omega,\zeta) \big( \frac{b_{0}+b_{1}+\zeta}{2} \big)^{x_{1}} \left( \frac{b_{0}+b_{1}+\omega}{2} \right)^{L-x_{2}}}{ \big(\frac{\zeta^{2}-(b_{1}-b_{0})^{2}}{4a_{1}a_{0}}\big)^{y_{1}+1} \big(\frac{\omega^{2}-(b_{1}-b_{0})^{2}}{4a_{1}a_{0}}\big)^{\frac{M+N}{2}-y_{2}}} \begin{pmatrix}
\frac{\zeta-(b_{1}-b_{0})}{2} & a_{0} \\
\frac{(\zeta-b_{1}+b_{0})(\omega+b_{1}-b_{0})}{4a_{0}} & \frac{\omega + b_{1} - b_{0}}{2}
\end{pmatrix} \frac{d\zeta  d\omega}{4a_{0}^{2}a_{1}^{2}},
\end{multline*}
where $\mathcal{W}(\omega) = \frac{1}{2a_{0}a_{1}}(\frac{b_{0}+b_{1}+\zeta}{2})^{L} (\frac{4a_{0}a_{1}}{\zeta^{2}-(b_{0}-b_{1})^{2}})^{\frac{M+N}{2}}$. The contour $\gamma_{\mathbb{C}}$ is a circle oriented positively and surrounding both $b_{0}-b_{1}$ and $b_{1}-b_{0}$. 
\end{theorem}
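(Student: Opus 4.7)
The strategy is to specialize Theorem~\ref{thm: double contour integrals simplifications} to the matrix $A$ in \eqref{A for 2x1} and carry out all the explicit computations prescribed by the machinery of Section~\ref{section: main results}. First I would verify Assumption~\ref{ass: matrix A}: the discriminant of the characteristic polynomial $\lambda^{2}-(b_{0}+b_{1})\lambda+b_{0}b_{1}-a_{0}a_{1}z$ equals $(b_{0}-b_{1})^{2}+4a_{0}a_{1}z$, so $A(z)$ is diagonalizable except at the single point $z_{\star}=-(b_{0}-b_{1})^{2}/(4a_{0}a_{1})$. Setting $\eta=\sqrt{(b_{0}-b_{1})^{2}+4a_{0}a_{1}z}$, the eigenvalues are $\widehat{\lambda}_{\pm}=(b_{0}+b_{1}\pm\eta)/2$, and one convenient choice of eigenvectors produces
\[
\e((z,\eta))=\begin{pmatrix}a_{0}\\ (\eta+b_{1}-b_{0})/2\end{pmatrix},\qquad \e^{-1}((z,\eta))=\frac{1}{a_{0}\eta}\begin{pmatrix}(\eta+b_{0}-b_{1})/2 & a_{0}\end{pmatrix},
\]
on the hyperelliptic curve $\mathcal{M}=\{(z,\eta):\eta^{2}=(b_{0}-b_{1})^{2}+4a_{0}a_{1}z\}$, which has two branch points ($z_{\star}$ and $\infty$) and hence is of genus $0$.

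Next I would uniformize via $\varphi(\zeta)=((\zeta^{2}-(b_{0}-b_{1})^{2})/(4a_{0}a_{1}),\zeta)$, so that $\phi'(\zeta)=\zeta/(2a_{0}a_{1})$ and $\widehat{\lambda}(\varphi(\zeta))=(b_{0}+b_{1}+\zeta)/2$. From the explicit form of $\e$ and $\e^{-1}$ one reads off $\mathcal{Z}=\widehat{\mathcal{Z}}=\emptyset$, $\mathcal{Q}=\{\varphi(\infty)\}$ with $n_{\varphi(\infty)}=-1$, and $\widehat{\mathcal{Q}}=\{\varphi(0)\}$ with $\widehat{n}_{\varphi(0)}=-1$. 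Therefore $h(\zeta)=1$ and $\widehat{h}(\zeta)=\zeta$, and since $-\sum n_{\z}=1=r-1$, criterion (e) of Theorem~\ref{thm: reprod Rcal U} gives $\mathfrak{R}_{N}^{\mathcal{W}}=\mathcal{R}_{N}^{\mathcal{W}}$. A direct substitution into \eqref{scalar weight} yields exactly the weight $\mathcal{W}$ stated in the theorem.

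With these ingredients in place, the formula \eqref{I simplified in thm} of Theorem~\ref{thm: double contour integrals simplifications} becomes essentially mechanical. For $r=2,\,q=1$ the matrices $B_{1},\dots,B_{4}$ reduce to scalar multiples of $I_{2}$ (namely $B_{1}(z)=z^{-y_{1}-1}I_{2}$, $B_{2}(w)=w^{y_{2}-(M+N)/2}I_{2}$, and $B_{3}$, $B_{4}$ analogously), so the rank-one product $\e(\varphi(\omega))\,\e^{-1}(\varphi(\zeta))$ generates the $2\times 2$ matrix
\[
\frac{1}{\zeta}\begin{pmatrix}(\zeta-(b_{1}-b_{0}))/2 & a_{0}\\ (\omega+b_{1}-b_{0})(\zeta-(b_{1}-b_{0}))/(4a_{0}) & (\omega+b_{1}-b_{0})/2\end{pmatrix},
\]
and the $1/\zeta$ is absorbed by $\widehat{h}(\omega)=\omega$ and $\phi'(\omega)\phi'(\zeta)=\omega\zeta/(4a_{0}^{2}a_{1}^{2})$. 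After collecting powers one recovers the claimed double integral; the single integral is handled in the same way, using $\e(\varphi(\zeta))\e^{-1}(\varphi(\zeta))$.

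The only genuinely non-automatic step is the contour description. By definition $\gamma_{\mathbb{C}}=\varphi^{-1}(\gamma_{\mathcal{M}})=\{\zeta\in\mathbb{C}:|\zeta^{2}-(b_{0}-b_{1})^{2}|=4a_{0}a_{1}\}$, which is not in general a round circle. However, the whole integrand is meromorphic in $\zeta$ and $\omega$ with singularities located only at the preimages of $z=0$, namely at $\zeta=\pm(b_{1}-b_{0})$ (coming from the negative powers of $\phi(\zeta)$ and $\phi(\omega)$). I would therefore deform $\gamma_{\mathbb{C}}$ to a positively oriented circle enclosing both of these points, checking that no poles are crossed --- this is the main technical point to verify carefully, but it is a standard Cauchy-theoretic argument once the locations of all singularities in $\zeta,\omega$ are identified.
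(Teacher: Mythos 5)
Your proposal is correct and follows essentially the same route as the paper: verify Assumption \ref{ass: matrix A}, diagonalize $A$ explicitly, uniformize the genus-$0$ curve $\eta^{2}=(b_{0}-b_{1})^{2}+4a_{0}a_{1}z$, read off $h(\zeta)=1$, $\widehat{h}(\zeta)=\zeta$, invoke Theorem \ref{thm: reprod Rcal U} (e) to get $\mathfrak{R}_{N}^{\mathcal{W}}=\mathcal{R}_{N}^{\mathcal{W}}$, and substitute into \eqref{I simplified in thm}. Your constant rescaling of the eigenvectors cancels in the products $\e\,\e^{-1}$, and your contour argument (deforming the Cassini-oval $\varphi^{-1}(\gamma_{\mathcal{M}})$ in the $\zeta$-plane after locating the poles at $\pm(b_{0}-b_{1})$) is the same Cauchy deformation the paper performs by first shrinking $\gamma$ around $0$ in the $z$-plane, so the differences are immaterial.
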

\begin{remark}
Note that none of the (possibly large) parameters $L, M, N, x_{1},y_{1},x_{2},y_{2}$ appear in the remaining matrices in the integrands. Therefore, this formula is expected to lead to a much simpler asymptotic analysis than \eqref{DK formula}, see also the discussion above Theorem \ref{thm: double contour integrals simplifications}. 
\end{remark}
\begin{proof}
It is a simple computation to verify that $A$ in \eqref{A for 2x1} can be written in the form \eqref{diagonal form of A} with
\begin{align}
& E(z) = \begin{pmatrix}
1 & 1 \\
\frac{b_{1}-b_{0}+\sqrt{\Delta}}{2a_{0}} & \frac{b_{1}-b_{0}-\sqrt{\Delta}}{2a_{0}}
\end{pmatrix}, \qquad E(z)^{-1} = \begin{pmatrix}
- \frac{b_{1}-b_{0}-\sqrt{\Delta}}{2\sqrt{\Delta}} & \frac{a_{0}}{\sqrt{\Delta}} \\
- \frac{b_{1}-b_{0}+\sqrt{\Delta}}{-2\sqrt{\Delta}} & \frac{a_{0}}{-\sqrt{\Delta}}
\end{pmatrix}, \label{E and E inv 2x1} \\
& \widehat{\Lambda}(z) = \diag \bigg(
\frac{b_{0}+b_{1}+\sqrt{\Delta}}{2}, \frac{b_{0}+b_{1}-\sqrt{\Delta}}{2}\bigg), \qquad \Delta = \Delta(z) = 4a_{0}a_{1}(z-z_{1}), \quad z_{1} = -\frac{(b_{0}-b_{1})^{2}}{4a_{0}a_{1}}, \nonumber 
\end{align}
where the principal branch is chosen for $\sqrt{\Delta}$. In particular $A$ satisfies Assumption \ref{ass: matrix A}. Let $\mathcal{M}$ be the genus $0$ Riemann surface associated to $\{(z,\eta) \in \mathbb{C}^{2}: \eta^{2} = \Delta(z)\}$. We view $\mathcal{M}$ as two copies of $\widehat{\mathbb{C}}$ that are glued along $(-\infty,z_{1})$, with $\eta = \sqrt{\Delta(z)}$ on the first sheet, and $\eta = -\sqrt{\Delta(z)}$ on the second sheet. The function 
\begin{align*}
\varphi(\zeta) = (\phi(\zeta),\zeta) \qquad \mbox{ with } \qquad \phi(\zeta) = z_{1}+\frac{\zeta^{2}}{4a_{0}a_{1}}
\end{align*}
is a bijection from $\widehat{\mathbb{C}}$ to $\mathcal{M}$ whose inverse is $\varphi^{-1}((z,\eta)) = \eta$. Substituting \eqref{E and E inv 2x1} in \eqref{def of lambda RS}--\eqref{def of efrak inv RS}, we obtain the following expressions
\begin{align}\label{e em1 and lambda hat 2x1}
\e(\varphi(\zeta))^{T} = \begin{pmatrix}
1 & \frac{b_{1}-b_{0}+\zeta}{2a_{0}}
\end{pmatrix}, \qquad \widehat{\e}(\varphi(\zeta)) = \begin{pmatrix}
\frac{\zeta+b_{0}-b_{1}}{2 \zeta} & \frac{a_{0}}{\zeta}
\end{pmatrix}, \qquad \lambda(\varphi(\zeta)) =\frac{\widehat{\lambda}(\varphi(\zeta))^{L}}{\phi(\zeta)^{\frac{M+N}{2}}},
\end{align}
with $\widehat{\lambda}(\varphi(\zeta)) = \frac{b_{1}+b_{0}+\zeta}{2}$. The function $\e$ has no zero and a simple pole at $\infty^{(1)}$, while $\widehat{\e}$ has no zero and a simple pole at $(z_{1},0)=z_{1}^{(1)}=z_{1}^{(2)}$. Hence, from Definition \ref{def: nz and nz hat} we have
\begin{align*}
\mathcal{Z} = \emptyset, \qquad \mathcal{Q} = \{\infty^{(1)}\}, \qquad n_{\infty^{(1)}} = -1, \qquad \widehat{\mathcal{Z}} = \emptyset, \qquad \widehat{\mathcal{Q}} = \{z_{1}^{(1)}\}, \qquad \widehat{n}_{z_{1}^{(1)}} = -1.
\end{align*}
Since $\varphi(\infty) = \infty^{(1)} = \infty^{(2)}$, the functions $h$ and $\widehat{h}$ defined in \eqref{def of h}-\eqref{def of h hat} reduce here to $h(\zeta) = 1$ and $\widehat{h}(\zeta) = \zeta$, and the scalar weight \eqref{scalar weight} is given by 
\begin{align*}
\mathcal{W}(\zeta) = \frac{\lambda(\varphi(\zeta))}{h(\zeta)\widehat{h}(\zeta)} \phi'(\zeta) = \frac{1}{2a_{0}a_{1}}\Big(\frac{b_{0}+b_{1}+\zeta}{2}\Big)^{L} \Big(\frac{4a_{0}a_{1}}{\zeta^{2}-(b_{0}-b_{1})^{2}}\Big)^{\frac{M+N}{2}}.
\end{align*}
Since
\begin{align*}
-\sum_{\z \in \mathcal{Z}\cup \mathcal{Q}} n_{\z} = 1,
\end{align*}
it follows from Theorem \ref{thm: reprod Rcal U} (e) that $\mathfrak{R}_{N}^{\mathcal{W}} = \mathcal{R}_{N}^{\mathcal{W}}$. The simplified formula for $K$ is now obtained from \eqref{I simplified in thm}, \eqref{e em1 and lambda hat 2x1} and a direct computation. It only remains to determine the shape of $\gamma_{\mathbb{C}}$. Note that $\gamma$ in \eqref{DK formula 2x1} can be deformed into any closed curve surrounding $0$, and in particular into a small circle surrounding $0$.  Recalling the definition $\gamma_{\mathbb{C}}= \varphi^{-1}(\gamma_{\mathcal{M}})=\varphi^{-1}( \cup_{j=1}^{2} \gamma^{(j)})$, this shows that $\gamma_{\mathbb{C}}$ can be determined from a local analysis of $\varphi^{-1}(\z)$ around $0^{(1)}=(0,\sqrt{\Delta(0)})$ and $0^{(2)}=(0,-\sqrt{\Delta(0)})$. Let us discuss first the case $b_{0} \neq b_{1}$, for which we have $0^{(1)} \neq 0^{(2)}$. Since $\varphi^{-1}(0^{(1)}) = |b_{0}-b_{1}|$ and $\varphi^{-1}(0^{(2)}) = -|b_{0}-b_{1}|$, it follows that the contour $\gamma_{\mathbb{C}}$ can be chosen as the union of two small circles oriented positively: one circle surrounds $b_{0}-b_{1}$, and the other one surrounds $b_{1}-b_{0}$. On the other hand, if $b_{0} = b_{1}$, then $0^{(1)}=0^{(2)}$ is a branch point of $\mathcal{M}$, and $\varphi^{-1}$ maps $\gamma^{(1)} \cup \gamma^{(2)}$ into a small circle surrounding $0$. We conclude that in all cases, $\gamma$ can be chosen to be a single circle oriented positively and enclosing $\pm(b_{0}-b_{1})$.
\end{proof}
\subsection{Lozenge tiling models with a $2 \times 2$ periodic weighting}\label{subsection: 2x2 periodic weightings} 
%In the recent work \cite{Charlier}, the author studied some asymptotic properties of a particular lozenge tiling model with a $2 \times 2$ periodic weighting. An important step in this analysis has been to simplify the formula for $K$ from \eqref{DK formula} so that it only involves a scalar CD kernel, see in particular \cite[Theorem 3.2]{Charlier}. In this section, we generalize this result to an arbitrary $2 \times 2$ periodic weighting. 
In this subsection, we show that the kernel of any lozenge tiling model with a $2 \times 2$ periodic weighting admits a double contour formula representation involving a scalar CD kernel. %This result is a generalization of \cite[Theorem 3.2]{Charlier}. %, which was valid for the particular weight \eqref{def of W}.

\medskip Lozenge tiling models with a $2 \times 2$ periodic weighting are defined by \eqref{weight on a block}--\eqref{periodic weightings} and \eqref{prob measure over the tilings} with $r=2$ and $q=2$. Assume that $M$, $N$ and $L$ are multiples of $2$. The transition matrices $A_{0}$ and $A_{1}$, defined in \eqref{matrix Al}, are given by
\begin{align*}
& A_{0}(z) = \begin{pmatrix}
b_{0,0} & a_{0,0} \\
a_{0,1} z & b_{0,1}
\end{pmatrix} , & & A_{1}(z) = \begin{pmatrix}
b_{1,0} & a_{1,0} \\
a_{1,1} z & b_{1,1}
\end{pmatrix},
\end{align*}
where $a_{\ell,j},b_{\ell,j} > 0$ for all $\ell, j \in \{0,1\}$. The quantities $\{L_{j}\}_{j=1}^{3}$ and $\{B_{j}\}_{j=1}^{4}$ of Theorem \ref{thm: DK} depend on the parity of $x_{1}$ and $x_{2}$. Therefore, to ease the notation, we now invoke Theorem \ref{thm: DK} with $x_{1}$ and $x_{2}$ replaced by $2x_{1}+\epsilon_{1}$ and $2x_{2}-\epsilon_{2}$, respectively, where $\epsilon_{1},\epsilon_{2} \in \{0,1\}$.

\medskip  For $\epsilon_{1},\epsilon_{2} \in \{0,1\}$, $y_{1},y_{2} \in \mathbb{Z}$, and for integers $x_{1},x_{2}$ such that $2x_{1}+\epsilon_{1},2x_{2}-\epsilon_{2} \in \{1,\ldots,L-1\}$, we have
\begin{multline}\label{DK formula 2x2 1}
[K(2x_{1}+\epsilon_{1},2y_{1}+j,2x_{2}-\epsilon_{2},2y_{2}+i)]_{i,j=0}^{1} = - \frac{\chi_{2x_{1}+\epsilon_{1}>2x_{2}-\epsilon_{2}}}{2\pi i} \oint_{\gamma} A_{1}(z)^{\epsilon_{2}}A(z)^{x_{1}-x_{2}}A_{0}(z)^{\epsilon_{1}} z^{y_{2}-y_{1}-1}dz \\
+ \frac{1}{2\pi i}\oint_{\gamma} \oint_{\gamma} \frac{A_{1}(w)^{\epsilon_{2}}}{w^{\frac{M+N}{2}-y_{2}}} A(w)^{\frac{L}{2}-x_{2}} \mathcal{R}_{N/2}^{W}(w,z) A(z)^{x_{1}}\frac{A_{0}(z)^{\epsilon_{1}}}{z^{y_{1}+1}} dzdw,
\end{multline}
where $\gamma$ is the unit circle oriented positively, and
\begin{align*}
W(z) = z^{-\frac{M+N}{2}}A(z)^{\frac{L}{2}}, \qquad \mbox{ with } \qquad A(z) = A_{0}(z) A_{1}(z).
\end{align*}

\begin{theorem}\label{thm: 2x2}
The right-hand side of \eqref{DK formula 2x2 1} can be simplified to
\begin{align}
& - \frac{\chi_{2x_{1}+\epsilon_{1}>2x_{2}-\epsilon_{2}}}{2\pi i} \oint_{\gamma_{\mathbb{C}}} \widehat{\lambda}(\varphi(\zeta))^{x_{1}-x_{2}}\phi(\zeta)^{y_{2}-y_{1}-1}\phi'(\zeta) A_{1}(\phi(\zeta))^{\epsilon_{2}}\e(\varphi(\zeta))\widehat{\e}(\varphi(\zeta))A_{0}(\phi(\zeta))^{\epsilon_{1}} d\zeta \nonumber \\
& +\frac{1}{2\pi i} \oint_{\gamma_{\mathbb{C}}}\oint_{\gamma_{\mathbb{C}}} \frac{\mathcal{R}_{N}^{\mathcal{W}}(\omega,\zeta)}{\widehat{h}(\omega)h(\zeta)} \frac{\widehat{\lambda}(\varphi(\omega))^{\frac{L}{2}-x_{2}}}{\phi(\omega)^{\frac{M+N}{2}-y_{2}}} \frac{\widehat{\lambda}(\varphi(\zeta))^{x_{1}}}{\phi(\zeta)^{y_{1}+1}} \phi'(\omega)\phi'(\zeta) A_{1}(\phi(\omega))^{\epsilon_{2}}\e(\varphi(\omega))  \widehat{\e}(\varphi(\zeta)) A_{0}(\phi(\zeta))^{\epsilon_{1}} d\zeta d\omega, \label{simplified formula K 2x2}
\end{align}
where the various quantities that appear in the integrands depend on
\begin{align*}
& a_{\pm} = a_{1,1}a_{0,0} \pm a_{0,1}a_{1,0}, & & b_{\pm} = b_{0,1}b_{1,1}\pm b_{0,0}b_{1,0}, \\
& c_{0} =  (a_{0,0}b_{1,1}+a_{1,0}b_{0,0})(a_{1,1}b_{0,1}+a_{0,1}b_{1,0}), & & c_{1} = (a_{0,1}b_{1,1}+a_{1,1}b_{0,0})(a_{1,0}b_{0,1}+a_{0,0}b_{1,0}), \\
& d=a_{0,0}b_{1,1}+a_{1,0}b_{0,0},
\end{align*}
and are given as follows.
\begin{itemize}
\item[(a)] If $a_{-} = 0$, then 
\begin{align*}
& \phi(\zeta) = \tfrac{\zeta^{2}-b_{-}^{2}}{2(c_{0}+c_{1})}, & & h(\zeta) = 1, & & \widehat{h}(\zeta) = \zeta, \\
& \e(\varphi(\zeta))^{T} = \begin{pmatrix}
1 & \frac{\zeta + b_{-}}{2d}
\end{pmatrix}, & & \widehat{\e}(\varphi(\zeta)) = \begin{pmatrix}
\frac{\zeta-b_{-}}{2\zeta} & \frac{d}{\zeta}
\end{pmatrix}, & & \widehat{\lambda}(\varphi(\zeta)) = \tfrac{a_{+}(\zeta-\zeta_{1})(\zeta - \zeta_{2})}{4(c_{0}+c_{1})}, \\
& \zeta_{1} = - b_{+} - \tfrac{2a_{1,0}b_{0,0}b_{0,1}}{a_{0,0}}, & & \zeta_{2} = -b_{+}- \tfrac{2 a_{0,0}b_{1,0}b_{1,1}}{a_{1,0}}, & & \mathcal{W}(\omega) = \tfrac{\widehat{\lambda}(\varphi(\omega))^{L}\phi'(\omega)}{\phi(\omega)^{\frac{M+N}{2}}h(\omega)\widehat{h}(\omega)}.
\end{align*}
The contour $\gamma_{\mathbb{C}}$ is a circle oriented positively and surrounding both $b_{-}$ and $-b_{-}$. 
\item[(b)] If $a_{-} \neq 0$, then we define
\begin{align}\label{def of zpm and c}
& z_{\pm} = \frac{1}{a_{-}^{2}}\Big( -(c_{0}+c_{1})\pm \sqrt{(c_{0}+c_{1})^{2}-a_{-}^{2}b_{-}^{2}} \Big), & & c = \frac{\sqrt{|z_{-}|}-\sqrt{|z_{+}|}}{\sqrt{|z_{-}|}+\sqrt{|z_{+}|}}.
\end{align}
They satisfy $z_{-}<z_{+}<0$, $c \in (0,1)$, and we have
\begin{align}
& \phi(\zeta) = \frac{z_{+}-z_{-}}{4\zeta}(\zeta-c)(\zeta -c^{-1}), & & h(\zeta) = \zeta^{N}, \quad \widehat{h}(\zeta) = \zeta^{N-2}(\zeta-1)(\zeta + 1), \nonumber \\
& \e(\varphi(\zeta))^{T} = \begin{pmatrix}
1 & \frac{b_{-}-a_{-}\phi(\zeta)+\eta(\zeta)}{2d} 
\end{pmatrix}, & & \widehat{\e}(\varphi(\zeta)) = \begin{pmatrix}
\frac{a_{-}\phi(\zeta)+\eta(\zeta)-b_{-}}{2\eta(\zeta)} & \frac{d}{\eta(\zeta)}
\end{pmatrix}, \label{e and einv for 2x2 a- neq 0} \\
& \eta(\zeta) = a_{-}\frac{z_{+}-z_{-}}{4}\left( \zeta-\zeta^{-1} \right), & & \widehat{\lambda}(\varphi(\zeta)) = \tfrac{a_{+} \phi(\zeta) + b_{+} + \eta(\zeta)}{2}, \quad \mathcal{W}(\omega) = \frac{\widehat{\lambda}(\varphi(\omega))^{L}\phi'(\omega)}{\phi(\omega)^{\frac{M+N}{2}}h(\omega)\widehat{h}(\omega)}. \nonumber
\end{align}
The contour $\gamma_{\mathbb{C}}$ is a closed curve surrounding both $c$ and $c^{-1}$ in the positive direction, but not surrounding $0$.
\end{itemize}
\end{theorem}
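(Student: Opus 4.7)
The plan is to apply Theorem \ref{thm: double contour integrals simplifications} to the product matrix $A(z) = A_0(z)A_1(z)$ and to verify, case-by-case, the specific parametrizations displayed in (a) and (b). I start from a direct expansion: $\operatorname{tr} A(z) = b_+ + a_+ z$, and a short computation using the identities $a_+^2 - 4a_{0,0}a_{0,1}a_{1,0}a_{1,1} = a_-^2$ and $b_+^2 - 4b_{0,0}b_{0,1}b_{1,0}b_{1,1} = b_-^2$ produces the discriminant
\[
\Delta(z) := (\operatorname{tr} A(z))^2 - 4 \det A(z) = a_-^2 z^2 + 2(c_0 + c_1) z + b_-^2,
\]
so the eigenvalues $\widehat{\lambda}_\pm(z) = \tfrac{1}{2}(\operatorname{tr} A(z) \pm \sqrt{\Delta(z)})$ are meromorphic on the Riemann surface $\mathcal{M} = \{(z,\eta) : \eta^2 = \Delta(z)\}$, which is of genus $0$ in both cases. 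In particular $A$ satisfies Assumption \ref{ass: matrix A}.

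For case (a), $a_- = 0$, $\Delta$ is linear, and the uniformizing map $\varphi(\zeta) = (\phi(\zeta),\zeta)$ with $\phi(\zeta) = (\zeta^2 - b_-^2)/(2(c_0+c_1))$ satisfies $\Delta(\phi(\zeta)) = \zeta^2$. For case (b), $a_- \neq 0$, I use the Joukowski-like parametrization $\phi(\zeta) = \frac{z_+-z_-}{4\zeta}(\zeta-c)(\zeta-c^{-1})$ and verify $\phi(\pm 1) = z_\pm$ by direct computation using $c+c^{-1} = 2(|z_-|+|z_+|)/(|z_-|-|z_+|)$; this produces the factorization $\eta(\zeta)^2 = \Delta(\phi(\zeta))$ with $\eta(\zeta) = a_-\frac{z_+-z_-}{4}(\zeta - \zeta^{-1})$. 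Solving $(A - \widehat{\lambda}I)\e = 0$ and imposing the normalisation \eqref{inverse e 1}--\eqref{inverse e 2} then yields by a short linear-algebra calculation the stated formulas for $\e(\varphi(\zeta))$ and $\e^{-1}(\varphi(\zeta))$ in each case.

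Next I determine the divisor data $\mathcal{Z}, \mathcal{Q}, \widehat{\mathcal{Z}}, \widehat{\mathcal{Q}}$ from these explicit formulas by a local analysis at the points at infinity of $\mathcal{M}$ and at the branch points. In both cases, $\e$ turns out to have no zero and a single simple pole (located at the point above $\infty$ on the sheet opposite to $\varphi(\infty)$), while $\e^{-1}$ has a simple zero at the same point and simple poles at the branch points of $\mathcal{M}$ (where the matrix $E(z)^{-1}$ blows up as the two eigenvalues collide). Plugging these orders into Definitions \ref{def: varphi}--\ref{def: scalar weight} reproduces the stated $h$, $\widehat{h}$ and $\mathcal{W}$. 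Since $-\sum_{\z \in \mathcal{Z} \cup \mathcal{Q}} n_\z = 1 = r-1$ in both cases, criterion (e) of Theorem \ref{thm: reprod Rcal U} gives $\mathfrak{R}_N^{\mathcal{W}} = \mathcal{R}_N^{\mathcal{W}}$.

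Finally, I substitute into \eqref{I simplified in thm}, with the original $\gamma$ from \eqref{DK formula 2x2 1} deformed to a small positively-oriented circle around $0$ that does not enclose the branch points $z_\pm$; its preimage $\varphi^{-1}(\gamma_\mathcal{M})$ is then two small loops (one around each root of $\phi(\zeta)=0$), which can be merged into the stated single contour enclosing both $c$ and $c^{-1}$ (resp.~both $\pm b_-$ in case (a)) but not $0$, by checking that no integrand singularity is crossed during this merge. Elementary simplification of the rank-one matrix $\e(\varphi(\omega))\e^{-1}(\varphi(\zeta))$, together with the explicit expressions for $\widehat{\lambda}\circ\varphi$, $\phi'$ and $h$, $\widehat{h}$, reproduces \eqref{simplified formula K 2x2}. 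The main obstacle is Step 2 in case (b): finding the Joukowski uniformization that correctly matches the branch points $z_\pm$ through the sign-choice encoded in $c$, and then ensuring that the resulting divisor bookkeeping in Step 3 comes out cleanly enough for criterion (e) to apply and for the contour deformation in the final step to go through.
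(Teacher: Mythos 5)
Your proposal follows essentially the same route as the paper's proof: diagonalize $A=A_0A_1$, uniformize the genus-zero curve $\eta^2=\Delta(z)$ (with the Joukowski-type map when $a_-\neq 0$), read off $\e$, $\e^{-1}$, $\widehat{\lambda}\circ\varphi$, compute the divisors to get $h$, $\widehat{h}$, invoke Theorem \ref{thm: reprod Rcal U} (e) to identify $\mathfrak{R}_{N}^{\mathcal{W}}=\mathcal{R}_{N}^{\mathcal{W}}$, and then apply Theorem \ref{thm: double contour integrals simplifications} and transport the small circle around $0$ through $\varphi^{-1}$, exactly as in the paper. One small caveat: in case (a) the point at infinity is a branch point, so your uniform divisor description (pole of $\e$ "on the sheet opposite to $\varphi(\infty)$", zero of $\e^{-1}$ there, poles of $\e^{-1}$ at all branch points) is slightly off — there the pole of $\e$ is at $\varphi(\infty)$ itself and $\e^{-1}$ has no zero and a pole only at the finite branch point — but since points equal to $\varphi(\infty)$ are excluded in \eqref{def of h}--\eqref{def of h hat}, this does not affect $h\equiv 1$, $\widehat{h}(\zeta)=\zeta$, nor the rest of the argument.
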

\begin{remark}
If $a_{-} \neq 0$, the function $\widehat{\e}$ has simple poles at $1$ and $-1$. However, because $\phi'(\zeta)$ has two simple zeros at $1$ and $-1$, the only poles of the integrand in \eqref{simplified formula K 2x2} are $0$, $c$ and $c^{-1}$. 
\end{remark}
\begin{proof}
Define
\begin{align*}
\Delta = \Delta(z) = a_{-}^{2}z^{2} + 2(c_{0}+c_{1})z+ b_{-}^{2} = \begin{cases}
a_{-}^{2}(z-z_{+})(z-z_{-}), & \mbox{if } a_{-} \neq 0, \\
2(c_{0}+c_{1})(z-z_{1}), & \mbox{if } a_{-} = 0,
\end{cases}
\end{align*}
where $z_{-},z_{+}$ are given by \eqref{def of zpm and c} and $z_{1}=- \frac{b_{-}^{2}}{2(c_{0}+c_{1})}<0$. The eigenvalues of $A(z) = A_{0}(z) A_{1}(z)$ are given by
\begin{align*}
& \widehat{\lambda}_{1}(z) = \frac{1}{2} \Big(a_{+} z + b_{+} +\sqrt{\Delta(z)}\Big), \qquad \widehat{\lambda}_{2}(z) = \frac{1}{2} \Big(a_{+} z + b_{+} - \sqrt{\Delta(z)}\Big),
\end{align*}
where the branch for $\sqrt{\Delta(z)}$ is taken as follows:
\begin{align*}
& \mbox{if $a_{-} \neq 0$, $\sqrt{\Delta(z)}$ is analytic in $\mathbb{C}\setminus [z_{-},z_{+}]$ and $\sqrt{\Delta(z)} \sim a_{-}z$ as $z \to \infty$,} \\
& \mbox{if  $a_{-} = 0$, $\sqrt{\Delta(z)}$ is analytic in $\mathbb{C}\setminus (-\infty,z_{1}]$ and $\sqrt{\Delta(z)} >0$ for $z > z_{1}$.}
\end{align*}
A simple computation shows that $A(z)$ can be diagonalized as in \eqref{diagonal form of A} with $\widehat{\Lambda}(z) = \diag(\widehat{\lambda}_{1}(z), \widehat{\lambda}_{2}(z))$ and 
\begin{align}\label{E and Einv for 2x2}
E(z) = \begin{pmatrix}
1 & 1 \\
\frac{b_{-}-a_{-}z+\sqrt{\Delta}}{2d} & \frac{b_{-}-a_{-}z-\sqrt{\Delta}}{2d}
\end{pmatrix}, \qquad E(z)^{-1} = \begin{pmatrix}
\frac{a_{-}z+\sqrt{\Delta}-b_{-}}{2\sqrt{\Delta}} & \frac{d}{\sqrt{\Delta}} \\
\frac{a_{-}z-\sqrt{\Delta}-b_{-}}{-2\sqrt{\Delta}} & \frac{d}{-\sqrt{\Delta}}
\end{pmatrix}.
\end{align}
In particular, $A$ satisfies Assumption \ref{ass: matrix A}. Let $\mathcal{M}$ be the genus $0$ Riemann surface associated to $\{(z,\eta)\in \mathbb{C}^{2} : \eta^{2} = \Delta(z) \}$. We choose the numbering of the sheets such that $\eta = \sqrt{\Delta(z)}$ on the first sheet, and $\eta = -\sqrt{\Delta(z)}$ on the second sheet. If $a_{-} = 0$, we note that the maps 
\begin{align*}
\varphi(\zeta) = \bigg(\frac{\zeta^{2}-b_{-}^{2}}{2(c_{0}+c_{1})}, \zeta\bigg) \qquad \mbox{ and } \qquad \varphi^{-1}((z,\eta)) = \eta
\end{align*}
are bijections from $\widehat{\mathbb{C}}$ to $\mathcal{M}$ and from $\mathcal{M}$ to $\widehat{\mathbb{C}}$, respectively, and therefore the claim is obtained in a similar way as in the proof of Theorem \ref{thm: 2x1} (we omit further details). We now consider the case $a_{-} \neq 0$. The function
\begin{align*}
\varphi^{-1}((z,\eta)) = \frac{2z+2a_{-}^{-1}\eta-(z_{+}+z_{-})}{z_{+}-z_{-}}
\end{align*}
maps the upper sheet to $\{\zeta:|\zeta|>1\}$ and the lower sheet to $\{\zeta:|\zeta|<1\}$. The inverse map is given by $\varphi(\zeta) = ( \phi(\zeta), \eta(\zeta) )$ with
\begin{align}\label{z and eta for 2x2 a- neq 0}
\phi(\zeta) = \frac{z_{+}+z_{-}}{2} + \frac{z_{+}-z_{-}}{4}(\zeta + \zeta^{-1}) \quad \mbox{ and } \quad \eta(\zeta) = a_{-}\frac{z_{+}-z_{-}}{4}\left( \zeta-\zeta^{-1} \right).
\end{align}
The function $\varphi$ satisfies
\begin{align*}
& \varphi(1) = (z_{+},0), \qquad \varphi(-1) = (z_{-},0), \qquad \varphi(\infty) = \infty^{(1)}, \qquad \varphi(0) = \infty^{(2)}, \\
& \varphi^{-1}(0^{(2)}) = \varphi^{-1}((0,-a_{-}\sqrt{z_{+}z_{-}})) = c, \qquad \varphi^{-1}(0^{(1)}) = \varphi^{-1}((0,a_{-}\sqrt{z_{+}z_{-}})) = c^{-1},
\end{align*}
where $c$ is defined in \eqref{def of zpm and c}. Also, by definition, $\phi(\zeta)$ vanishes at $\varphi^{-1}(0^{(1)})$ and $\varphi^{-1}(0^{(2)})$, and it has simple poles at $\zeta = 0$ and $\zeta = \infty$. Hence, it can be rewritten as 
\begin{align*}
\phi(\zeta) = \frac{z_{+}-z_{-}}{4\zeta}(\zeta-c)(\zeta -c^{-1}).
\end{align*}
The expressions \eqref{e and einv for 2x2 a- neq 0} for $\e$ and $\widehat{\e}$ follow directly from \eqref{def of efrak RS}-\eqref{def of efrak inv RS}, \eqref{E and Einv for 2x2} and \eqref{z and eta for 2x2 a- neq 0}. Since $\e$ has no zero and a simple pole at $\infty^{(2)}$, by \eqref{def of h} and Definition \ref{def: nz and nz hat}, we have 
\begin{align*}
\mathcal{Z} = \emptyset, \qquad \mathcal{Q} = \{\infty^{(2)}\}, \qquad n_{\infty^{(2)}} = -1, \qquad h(\zeta) = (\zeta-\varphi^{-1}(\infty^{(2)}))^{N-1}(\zeta-\varphi^{-1}(\infty^{(2)})) = \zeta^{N}.
\end{align*}
Similarly, since $\widehat{\e}$ has a simple zero at $\infty^{(2)}$ and simple poles at $z_{+}$ and $z_{-}$, we have
\begin{align*}
\widehat{\mathcal{Z}} = \{\infty^{(2)}\}, \qquad \widehat{\mathcal{Q}} = \{z_{+}, z_{-}\}, \qquad \widehat{n}_{\infty^{(2)}} = 1, \qquad \widehat{n}_{z_{+}} = \widehat{n}_{z_{-}} = -1
\end{align*}
and therefore, using \eqref{def of h hat} we obtain
\begin{align*}
\widehat{h}(\zeta) = (\zeta-\varphi^{-1}(\infty^{(2)}))^{N-1}\frac{(\zeta-\varphi^{-1}(z_{+}))(\zeta-\varphi^{-1}(z_{-}))}{\zeta-\varphi^{-1}(\infty^{(2)})} = \zeta^{N-2}(\zeta-1)(\zeta + 1).
\end{align*}
Since $-\sum_{\z \in \mathcal{Z}\cup \mathcal{Q}} n_{\z} = 1$, Theorem \ref{thm: reprod Rcal U} (e) implies that $\mathfrak{R}_{N}^{\mathcal{W}} = \mathcal{R}_{N}^{\mathcal{W}}$. Finally, since $\gamma$ can be deformed into any closed curve surrounding $0$, and since $\varphi^{-1}(0^{(1)})=c^{-1}$ and $\varphi^{-1}(0^{(2)})=c$, it follows that $\gamma_{\mathbb{C}}$ can be chosen as the union of two small circles; one surrounds $c$ and the other one surrounds $c^{-1}$, but none of them surround $0$. The formula \eqref{simplified formula K 2x2} now follows from a direct application of Theorem \ref{thm: double contour integrals simplifications}.
\end{proof}
\section{Proofs of Theorems \ref{thm: reproducing kernel Riemann surface} and \ref{thm: reprod Rcal U}}\label{section: main results}
%\paragraph{Notation.} Let $\mathcal{M}$ be an $r$-sheeted compact Riemann surface. We will use $z$ and $w$ to denote complex variables, and $\z$ and $\w$ to denote points of $\mathcal{M}$. If several of these variables appear within one equation, then we adopt the following convention: the projection of a point $\z \in \mathcal{M}$ (resp. $\w \in \mathcal{M}$) on the complex plane is denoted $z$ (resp. $w$).
%For the reader's convenience, we briefly recall some notation. 
%\vspace{-0.4cm}\paragraph{Notation.} We will use $z$ and $w$ to denote complex variables, and $\z$ and $\w$ to denote points of $\mathcal{M}$. Given a point $z \in \mathbb{C}$ (resp. $w \in \mathbb{C}$), its projection on the $k$-th sheet of $\mathcal{M}$ will be denoted $z^{(k)}$ (resp. $w^{(k)}$). We denote $e_{k}$ for the $k$-th column of $I_{r}$. 
\subsection{Proof of Theorem \ref{thm: reproducing kernel Riemann surface}}\label{proof kernel M}
%\begin{definition} ($\widetilde{\e}$ and $\widetilde{\e}^{-1}$)
%Since $\e$ and $\widehat{\e}$ are meromorphic functions on $\mathcal{M}$, the vectors valued functions
%\begin{align*}
%& \widetilde{\e}(\zeta):=\e(\varphi(\zeta)) \hspace{-0.3cm} \prod_{\z \in (\mathcal{Z}\cup \mathcal{Q})\setminus \{\varphi(\infty)\}} \hspace{-0.2cm} (\zeta-\varphi^{-1}(\z))^{-n_{\z}} \\
%& \widetilde{\e}^{-1}(\zeta):=\widehat{\e}(\varphi(\zeta)) \hspace{-0.3cm} \prod_{\z \in (\widehat{\mathcal{Z}}\cup \widehat{\mathcal{Q}})\setminus \{\varphi(\infty)\}} \hspace{-0.2cm} (\zeta-\varphi^{-1}(\z))^{-\widehat{n}_{\z}}
%\end{align*}
%are polynomials. The elements of $\widetilde{\e}$ are of degree $\leq -\sum_{\z \in \mathcal{Z}\cup \mathcal{Q}} n_{\z}$, and those of $\widetilde{\e}^{-1}$ are of degree $\leq -\sum_{\z \in \widehat{\mathcal{Z}}\cup \widehat{\mathcal{Q}}}\widehat{n}_{\z}$. Furthermore, the elements of $\widetilde{\e}$ are $r$ linearly independent polynomials (this follows from \eqref{efrak has independent entries} below), and this is also consistent with \eqref{some bounds}.
%\end{definition}
The existence of $R_{N}^{\lambda}$ follows directly from \eqref{def of RM} and the assumption that $\mathcal{R}_{N}^{W}$ exists. Given $z \in \mathbb{C}$, recall that $z^{(k)}$ denotes the point on the $k$-th sheet of $\mathcal{M}$ whose projection on $\mathbb{C}$ is $z$. Because $E(z)$ is invertible for all but finitely many $z \in \mathbb{C}$, for any $P \in \mathcal{P}_{N-1}^{1 \times r}$ we have
\begin{align*}%\label{efrak has independent entries}
P(z) \e(\z) \equiv 0 \; \Leftrightarrow \; P(z) \e(z^{(j)}) \equiv 0 \quad \forall j \in \{1,\ldots,r\} \; \Leftrightarrow \; P(z)E(z) \equiv 0 \; \Leftrightarrow \; P(z) \equiv 0
\end{align*}
from which we conclude that $\dim L_{N} =rN$. Similarly, for any $P \in \mathcal{P}_{N-1}^{r \times 1}$, we have
\begin{align*}
\widehat{\e}(\z)P(z)  \equiv 0 \; \Leftrightarrow \; \widehat{\e}(z^{(j)})P(z)  \equiv 0 \quad \forall j \in \{1,\ldots,r\} \; \Leftrightarrow \; E(z)^{-1}P(z) \equiv 0 \; \Leftrightarrow \; P(z) \equiv 0,
\end{align*}
and therefore $\dim \widehat{L}_{N} =rN$. Since $\mathcal{R}_{N}^{W}(w,z)$ is a bivariate $r \times r$ matrix polynomial of degree $\leq N-1$ in both $w$ and $z$, the statements (a) and (b) follow directly from \eqref{def of RM} and \eqref{def of LN and LN star}. Next, we start from \eqref{reproducing kernel}, and use \eqref{eigenvalue eigenvector decomposition of A} and \eqref{def of lambda RS}--\eqref{def of efrak inv RS} to note the following equivalences %(recall that $e_{k}$ denotes the $k$-th column of $I_{r}$)
\begin{align*}
& \int_{\gamma} P(w) W(w)\mathcal{R}_{N}^{W}(w,z)dw = P(z), & & \forall P \in \mathcal{P}_{N-1}^{r \times r}, \; z \in \mathbb{C}, \\
\Leftrightarrow \; &  \int_{\gamma} P(w) W(w)\mathcal{R}_{N}^{W}(w,z)dw = P(z), & & \forall P \in \mathcal{P}_{N-1}^{1 \times r}, \; z \in \mathbb{C}, \\
\Leftrightarrow \; &  \int_{\gamma} P(w) E(w) \bigg(\sum_{j=1}^{r}\lambda(w^{(j)})e_{j}e_{j}^{T}\bigg)E^{-1}(w)\mathcal{R}_{N}^{W}(w,z)dw = P(z), & & \forall P \in \mathcal{P}_{N-1}^{1 \times r}, \; z \in \mathbb{C}, \\
\Leftrightarrow \; &  \int_{\gamma_{\mathcal{M}}} P(w) \e(\w) \lambda(\w)\widehat{\e}(\w)\mathcal{R}_{N}^{W}(w,z)dw = P(z), & & \forall P \in \mathcal{P}_{N-1}^{1 \times r}, \; z \in \mathbb{C}, \\
\Leftrightarrow \; &  \int_{\gamma_{\mathcal{M}}} P(w) \e(\w) \lambda(\w)R_{N}^{\lambda}(\w,\z)dw = P(z)\e(\z), & & \forall P \in \mathcal{P}_{N-1}^{1 \times r}, \; \z \in \mathcal{M}_{*}\setminus \mathcal{Q},
\end{align*}
where in the last two equations, $w$ denotes the projection of $\w$ on the complex plane, and similarly for $z$ and $\z$ in the last equation. By definition \eqref{def of LN and LN star} of $L_{N}$, this last property is equivalent to \eqref{reproducing property M}, which proves (c). The proof of (d) is similar, and we omit it. 

\subsection{Proof of Theorem \ref{thm: reprod Rcal U}}\label{subsection: proof thm}
Existence of $\mathfrak{R}_{rN}^{\mathcal{W}}$ is obvious from \eqref{def of RM}, \eqref{def of RU} and the assumption that $\mathcal{R}_{N}^{W}$ exists. The identities $\dim \mathcal{V} = \dim \widehat{\mathcal{V}} = rN$ have already been proved in Remark \ref{remark: V and V* are polynomial spaces}. The statements (a) and (b) follow directly from the definition \eqref{V and V*} and Theorem \ref{thm: reproducing kernel Riemann surface} (a)-(b). We now turn to the proof of (c). Since the sets $\mathcal{Q}$ and $\mathcal{Z}$ of Definition \ref{def: nz and nz hat} are finite, we have the following equivalences
\begin{align*}
& \int_{\gamma_{\mathcal{M}}} f(\w) \lambda(\w)R_{N}^{\lambda}(\w,\z)dw = f(\z), & & \forall f \in L_{N}, \; \z \in \mathcal{M}_{*}\setminus \mathcal{Q}, \\
\Leftrightarrow \;  & \int_{\gamma_{\mathcal{M}}} f(\w) \lambda(\w)R_{N}^{\lambda}(\w,\varphi(\zeta))dw = f(\varphi(\zeta)), & & \forall f \in L_{N}, \; \zeta \in \mathbb{C}\setminus \varphi^{-1}(\mathcal{Q}), \\
\Leftrightarrow \;  & \int_{\gamma_{\mathcal{M}}} f(\w) \lambda(\w)R_{N}^{\lambda}(\w,\varphi(\zeta))h(\zeta)dw = f(\varphi(\zeta)) h(\zeta), & & \forall f \in L_{N}, \; \zeta \in \mathbb{C}\setminus \varphi^{-1}(\mathcal{Q} \cup \mathcal{Z}), \\
\Leftrightarrow \;  &  \int_{\gamma_{\mathbb{C}}} p(\omega) \frac{\lambda(\varphi(\omega))}{h(\omega)\widehat{h}(\omega)}\widehat{h}(\omega)R_{N}^{\lambda}(\varphi(\omega),\varphi(\zeta))h(\zeta) \phi'(\omega)d\omega = p(\zeta), & & \forall p \in \mathcal{V}, \; \zeta \in \mathbb{C}, \\
\Leftrightarrow \;  & \int_{\gamma_{\mathbb{C}}} p(\omega) \mathcal{W}(\omega)  \mathfrak{R}_{rN}^{\mathcal{W}}(\omega,\zeta) d\omega = p(\zeta), & & \forall p \in \mathcal{V}, \; \zeta \in \mathbb{C},
\end{align*}
which prove (c). The statement (d) follows in a similar way, and we omit the proof. Finally, recall that only one property among \eqref{reproducing kernel}-\eqref{reproducing kernel 2} is sufficient to uniquely determine a CD kernel. Hence, Remark \ref{remark: V and V* are polynomial spaces} implies that
\begin{align*}
& -\sum_{\z \in \mathcal{Z}\cup \mathcal{Q}} n_{\z} = r-1  \; \Leftrightarrow \; \mathcal{V} = \mathcal{P}_{rN-1} \; \Leftrightarrow \; \mathfrak{R}_{rN}^{\mathcal{W}} = \mathcal{R}_{rN}^{\mathcal{W}}, \\
& -\sum_{\z \in \widehat{\mathcal{Z}}\cup \widehat{\mathcal{Q}}}\widehat{n}_{\z} = r-1 \; \Leftrightarrow \; \widehat{\mathcal{V}} = \mathcal{P}_{rN-1} \; \Leftrightarrow \; \mathfrak{R}_{rN}^{\mathcal{W}} = \mathcal{R}_{rN}^{\mathcal{W}},
\end{align*}
which, taken together, are equivalent to (e). %The expression \eqref{scalar kernel in terms of OPs} is simply the CD formula \eqref{CD simplified} specialized to the scalar weight $\mathcal{W}$. 

\appendix

\section{On the eigenvalues and eigenvectors of $W$}\label{section: appendix eigenvector}
In this appendix we discuss some analytical properties of the eigenvalues and eigenvectors of $W$. Since $W$ is rational, there exists a scalar polynomial $p$ such that $W=p^{-1}T$, where $T$ is a polynomial matrix. For each $z$ that is not a pole of $W$, the eigenvalues $\lambda_{1}(z),\ldots,\lambda_{r}(z)$ of $W(z)$ and the eigenvalues $\theta_{1}(z),\ldots,\theta_{r}(z)$ of $T(z)$ are straightforwardly related by $\lambda_{j}(z) = p(z)^{-1}\theta_{j}(z)$, $j=1,\ldots,m$, and each eigenvector $v(z)$ of $W(z)$ satisfying $W(z)v(z)=\lambda_{k}(z)v(z)$ also satisfies $T(z)v(z)=\theta_{k}(z)v(z)$. Therefore, we restrict from now our discussion on the eigenvalues and eigenvectors of $T$. Most of the facts listed below are rather direct consequences of the classical book of \cite{Kato}.

\paragraph{On the eigenvalues of $T$.} Let $\mathcal{M}$ be the Riemann surface constructed from the zero set \eqref{Riemann surface M general case}. The eigenvalues of $T$ are (branches of) meromorphic functions on $\widehat{\mathbb{C}}:=\mathbb{C}\cup\{\infty\}$, and together they define a meromorphic function on $\mathcal{M}$. Let $\uptheta_{1}(z),\ldots,\uptheta_{s}(z)$ denote the \textit{distinct} eigenvalues of $T(z)$, and let $m_{1},\ldots,m_{s}$ be their multiplicities, with $m_{1}+\ldots + m_{s}=r$. Since $\mathcal{M}$ is compact, $s,m_{1},\ldots,m_{s}$ are constant for all $z \in \mathbb{C}\setminus \mathcal{E}$, where $\mathcal{E}$ consists of at most finitely many exceptional points.\footnote{Without the compactness of $\mathcal{M}$, we would only have that $\mathcal{E}$ is locally finite. Also, the terminology \textit{exceptional points} to denote points of $\mathcal{E}$ is standard \cite{Kato}.} Any point that is a branch point for some of the $\uptheta_{j}$'s belongs to $\mathcal{E}$; however $\mathcal{E}$ may also contain other points, see \cite[page 64, Example 1.1]{Kato}. Also, because $T$ is analytic in $\mathbb{C}$, the functions $z \mapsto \uptheta_{j}(z)$ are \textit{continuous} at any point $z \in \mathbb{C}$, also at a branch point. This fact is essentially a consequence of Rouch\'{e}'s theorem, see \cite[p. 122]{Knopp}. In particular the $\uptheta_{j}$'s have no pole in $\mathbb{C}$, although they can have a pole at $\infty$. 

\paragraph{On the eigenvectors of $T$.} Eigenprojections are standard tools in analytic perturbation theory. They are closely related to the eigenvectors (see below), but allow for a simplified analysis and their properties have been studied in great depth in \cite{Kato}. The eigenprojection $\mathsf{P}_{k}(z)$ associated to $\uptheta_{k}(z)$ is defined for $z \in \mathbb{C}\setminus \mathcal{E}$ by
\begin{align*}
\mathsf{P}_{k}(z) = -\frac{1}{2\pi i}\oint_{\Gamma_{k,z}} (T(z)-\uplambda)^{-1}d\uplambda, \qquad k=1,\ldots,s,
\end{align*}
where $\Gamma_{k,z}$ is a small contour in the complex plane which surrounds $\uptheta_{k}(z)$, but does not surround $\uptheta_{j}(z)$, $j \neq k$. Given $z \in \widehat{\mathbb{C}}$, we let $z^{(k)}$ denotes the point on the $k$-th sheet of $\mathcal{M}$ whose projection on $\widehat{\mathbb{C}}$ is $z$. An individual $\mathsf{P}_{k}$ has the same branch cut as $\uptheta_{k}$, and taken together the $\mathsf{P}_{k}$'s naturally define a meromorphic function on $\mathcal{M}$, which can only have poles at 
\begin{align*}
\{\infty^{(1)},\ldots,\infty^{(r)}\} \cup \bigcup_{j=1}^{r}\mathcal{E}^{(j)},
\end{align*}
see \cite[Chapter II, Sections 4--6]{Kato}. As its name suggests, the operator $\mathsf{P}_{k}(z)$ is a projection, and it satisfies \cite[pages 40]{Kato}
\begin{align*}
(T(z)-\uptheta_{k}(z))^{m_{k}}\mathsf{P}_{k}(z) = 0_{r}.
\end{align*}
Furthermore, the images of $\mathsf{P}_{k}(z)$ and of $\mathsf{P}_{k}(z')$ are isomorphic for any $z,z' \in \mathbb{C}\setminus \mathcal{E}$, and $\dim \im \mathsf{P}_{k}(z) = m_{k}$ for $z \in \mathbb{C}\setminus \mathcal{E}$, see \cite[page 68]{Kato}. 

\medskip By Assumption \ref{ass: weight diag}, $T(z)$ is diagonalizable for all $z \in \mathbb{C}\setminus \mathcal{D}$ where $\mathcal{D}$ is a finite set, and we assume without loss of generality that $\mathcal{E} \subset \mathcal{D}$. This implies in particular that the algebraic eigenspaces $\im \mathsf{P}_{k}(z)$, $k=1,\ldots,s$ coincide with the geometric eigenspaces, i.e. we have
\begin{align*}
(T(z)-\uptheta_{k}(z))\mathsf{P}_{k}(z) = 0_{r}, \qquad \mbox{or equivalently} \qquad T(z)\mathsf{P}_{k}(z) = \uptheta_{k}(z)\mathsf{P}_{k}(z),
\end{align*}
for all $k=1,\ldots,s$. Fix $z \in \mathbb{C}\setminus \mathcal{D}$. For each $k=1,\ldots,s$, we take $m_{k}$ linearly independent columns of $\mathsf{P}_{k}(z)$. Together these columns form a matrix of eigenvectors $E(z)$, and we choose the numbering of the columns such that
\begin{align*}
T(z)E(z) = E(z)\diag (\theta_{1}(z),\ldots,\theta_{r}(z)).
\end{align*} 
Since the columns of $E$ are (branches of) meromorphic functions, they remain linearly independent for all but a finite number of points. Redefining $\mathcal{D}$ is necessary, we can assume that $E(z)$ is invertible for all values of $\mathbb{C}\setminus \mathcal{D}$. This finishes the construction of a matrix of eigenvectors $E$ whose columns define a meromorphic function $\e$ on $\mathcal{M}$ as in \eqref{def of efrak RS}.

\medskip It directly follows from the above properties of $E$ and from Cramer's formula that the rows of $E^{-1}$ define also a meromorphic function $\widehat{\e}$ on $\mathcal{M}$ as in \eqref{def of efrak inv RS}.

\section{On the CD formula \eqref{CD simplified} for $\mathcal{R}_{N}^{W}$}\label{section: appendix}
The goal of this appendix is to rewrite \eqref{def of mathcal R} in the form \eqref{CD simplified}. Assume that $P_{N}^{\mathrm{L}}$ exists and is unique. Then the solution to the RH problem for $Y$ exists, is also unique, and can be explicitly written in term of MOPs as follows \cite[eq (4.31)]{DK}:
\begin{equation}\label{Y definition}
Y(z) = \begin{pmatrix}
P_{N}^{\mathrm{L}}(z) & \displaystyle \frac{1}{2\pi i} \int_{\gamma} P_{N}^{\mathrm{L}}(s) W(s) \frac{ds}{s-z} \\
-2\pi i Q_{N-1}^{\mathrm{L}}(z) & \displaystyle - \int_{\gamma} Q_{N-1}^{\mathrm{L}}(s) W(s) \frac{ds}{s-z}
\end{pmatrix}, \qquad z \in \mathbb{C}\setminus \gamma.
\end{equation} 
Since $Y$ satisfies $\det Y \equiv 1$, the existence of $Y^{-1}$ follows from that of $Y$. An explicit expression for $Y^{-1}$ is not clear from \eqref{Y definition}, but can be easily obtained by considering the RH problem for $Y^{-1}$, which is as follows. 
\vspace{-0.3cm}
\subsubsection*{RH problem for $Y^{-1}$}
\begin{itemize}
\item[(a)] $Y^{-1} : \mathbb{C}\setminus \gamma \to \mathbb{C}^{2r \times 2r}$ is analytic.
\item[(b)] The limits of $Y^{-1}(z)$ as $z$ approaches $\gamma_{0}$ from left and right exist, are continuous on $\gamma_{0}$, and are denoted by $Y_+^{-1}$ and $Y_-^{-1}$, respectively. Furthermore, they are related by
\begin{equation*}%\label{jumps for Y-1}
Y_{+}^{-1}(z) = \begin{pmatrix}
I_{r} & -W(z) \\ 0_{r} & I_{r}
\end{pmatrix}Y_{-}^{-1}(z), \hspace{0.5cm} \mbox{ for } z \in \gamma_{0}.
\end{equation*}
\item[(c)] As $z \to \infty$, we have $Y^{-1}(z) = \begin{pmatrix}
z^{-N}I_{r} & 0_{r} \\ 0_{r} & z^{N}I_{r}
\end{pmatrix} \left(I_{2r} + \bigO(z^{-1})\right) $.  \\[0.2cm]
As $z \to z_{\star} \in \gamma\setminus \gamma_{0}$, we have $Y^{-1}(z) = \bigO(\log (z-z_{\star}))$.
\end{itemize}
It is easily verified from \eqref{orthogonality for PNR and PNL}--\eqref{orthogonality for Q} that the unique solution to the above RH problem is given by
\begin{equation}\label{Y inv definition}
Y^{-1}(z) = \begin{pmatrix}
\displaystyle - \int_{\gamma}  W(s) Q_{N-1}^{\mathrm{R}}(s) \frac{ds}{s-z} & \displaystyle -\frac{1}{2\pi i} \int_{\gamma}  W(s)P_{N}^{\mathrm{R}}(s) \frac{ds}{s-z} \\
2\pi i Q_{N-1}^{\mathrm{R}}(z) & P_{N}^{\mathrm{R}}(z)
\end{pmatrix}, \qquad z \in \mathbb{C}\setminus \gamma.
\end{equation}
The CD formula \eqref{CD simplified} is now simply obtained by substituting \eqref{Y definition} and \eqref{Y inv definition} in \eqref{def of mathcal R}. %Since $\mathcal{R}_{N}^{W}(w,z)$ is a bivariate polynomials in $w$ and $z$, the numerator in \eqref{CD simplified} must vanish at $w=z$. We indeed verify from the relation $Y^{-1}(z)Y(z)\equiv I_{2r}$ that $Q_{N-1}^{\mathrm{R}}(z)P_{N}^{\mathrm{L}}(z)-P_{N}^{\mathrm{R}}(z)Q_{N-1}^{\mathrm{L}}(z)=0$, as desired.

\paragraph{Acknowledgment.} I am very grateful to Arno Kuijlaars for our many interesting discussions on the topic of this article, and for helpful comments on some early drafts. This work is supported by the European Research Council, Grant Agreement No. 682537.

\footnotesize

\end{document}